\title[A Fr\'echet topology on measured laminations]{A Fr\'echet topology on measured laminations and
Earthquakes in the hyperbolic plane}
\author{Hideki Miyachi and Dragomir \v{S}ari\'c}
\thanks{The first author is partially supported by
Grant-in-Aid for Scientific Research (C) 21540177}
\thanks{The second author was partially supported by
PSC-CUNY grant PSCREG-40-136.} \keywords{} \subjclass[2000]{}
\date{}
\address{Department of Mathematics,
Graduate School of Science, Osaka University,
Machikaneyama 1-1, Toyonaka, Osaka, 560-0043, Japan}
\email{miyachi@math.sci.osaka-u.ac.jp}
\address{Department of Mathematics,
CUNY Queens College and Graduate Center, 65-30 Kissena Blvd.,
Flushing, NY 11367} \email{dragomir.saric@qc.cuny.edu}
\newtheorem{theorem}{Theorem}
\newtheorem{proposition}{Proposition}[section]
\newtheorem{lemma}{Lemma}[section]
\newtheorem{remark}{Remark}[section]
\newcommand{\hol}{{\rm H\ddot{o}l}}
\newcommand{\geodesic}[2]{\lceil#1,#2\rceil}
\newcommand{\zyg}{\mathcal{Z}}
\begin{document}

\begin{abstract}
We prove that the bijective correspondence between the space of
bounded measured laminations $ML_b(\mathbb{H})$ and the universal
Teichm\"uller space $T(\mathbb{H})$ given by $\lambda\mapsto
E^{\lambda}|_{S^1}$ is a homeomorphism for the Fr\'echet topology on
$ML_b(\mathbb{H})$ and the Teichm\"uller topology on
$T(\mathbb{H})$, where $E^{\lambda}$ is an earthquake with
earthquake measure $\lambda$. A corollary is that earthquakes with
discrete earthquake measures are dense in $T(\mathbb{H})$. We also
establish infinitesimal versions of the above results.
\end{abstract}

\maketitle

\section{Introduction}

A Riemann surface is said to be \emph{hyperbolic} if its universal
covering is the hyperbolic plane $\mathbb{H}$. \footnote{We are
particularly interested in the geometrically infinite hyperbolic
Riemann surfaces, e.g. the hyperbolic plane $\mathbb{H}$, an
infinite genus surface, a surface with an interval of ideal boundary
points. All these surfaces have infinite hyperbolic area.} A
quasiconformal map between two hyperbolic Riemann surfaces lifts to
a quasiconformal map between their universal coverings, which are
identified with the hyperbolic plane $\mathbb{H}$. This map
continuously extends to a quasisymmetric map of the boundary
$\partial\mathbb{H}$ of the hyperbolic plane, which is in turn
identified with the unit circle $S^1$. The homotopy class of a
quasiconformal map between two Riemann surfaces is uniquely
determined by the quasisymmetric map of $S^1$, and this induces a
natural complex analytic embedding of the Teichm\"uller space of any
hyperbolic Riemann surface into the Teichm\"uller space
$T(\mathbb{H})$ of the hyperbolic plane $\mathbb{H}$, called the
\emph{universal Teichm\"uller space}.

The universal Teichm\"uller space $T(\mathbb{H})$ is the space of
all quasisymmetric maps of the unit circle $S^1$ modulo
post-composition by M\"obius maps which preserve $\mathbb{H}$. It is
an infinite-dimensional complex Banach manifold which contains other
interesting spaces of circle maps. We study $T(\mathbb{H})$ by the
use of the hyperbolic geometry of $\mathbb{H}$. Our main objects are
earthquakes in the hyperbolic plane $\mathbb{H}$ and H\"older
distributions on the space $\mathcal{G}$ of geodesics of the
hyperbolic plane $\mathbb{H}$.

Earthquake maps in the hyperbolic plane $\mathbb{H}$ (and on any
hyperbolic Riemann surface) were introduced by Thurston
\cite{Thurston}. An earthquake in the hyperbolic plane is a
bijective map $E:\mathbb{H}\to\mathbb{H}$ which is \emph{supported}
on a geodesic lamination $\mathcal{L}$ in $\mathbb{H}$ in the sense
that it is a hyperbolic isometry on each \emph{stratum} (i.e. a leaf
of $\mathcal{L}$ or a component of $\mathbb{H}\setminus\mathcal{L}$)
of $\mathcal{L}$, and which (relatively) translates to the left
points of different strata of $\mathcal{L}$. An earthquake
$E:\mathbb{H}\to\mathbb{H}$ continuously extends to a homeomorphism
of $S^1$ and it induces a transverse Borel measure to its support
lamination $\mathcal{L}$, called the \emph{earthquake measure}. The
earthquake measure of $E$ measures the amount of the relative
movement to the left by $E$. An earthquake measure $\lambda$
uniquely determines earthquake $E^{\lambda}:\mathbb{H}\to\mathbb{H}$
up to post-composition by M\"obius maps.

Thurston \cite{Thurston} showed that any homeomorphism of the unit
circle $S^1$ is obtained as the continuous extension of an
earthquake in $\mathbb{H}$ to its boundary $S^1$. In other words,
any homeomorphism of $S^1$ can be geometrically constructed as the
continuous extension to the boundary $S^1$ of a piecewise isometry
of $\mathbb{H}$ which moves strata of its support geodesic
lamination to the left by the amount given by a transverse Borel
measure to the lamination. However, the relationship between
homeomorphisms and earthquake measures of the earthquakes inducing
them is not a simple one. This paper is mainly concerned with the
dependence of the earthquake measures on homeomorphisms of $S^1$.

A measured lamination $\lambda$ is said to be \emph{bounded} if
$$
\sup_I\lambda (I)<\infty
$$
where the supremum is over all geodesic arcs $I$ of unit length that
transversely intersect the support of $\lambda$. Then a
homeomorphism is quasisymmetric if and only if
$h=E^{\lambda}|_{S^1}$ for a bounded earthquake measure $\lambda$
(see \cite{GHL}, \cite{Saric4} and \cite{Saric2}).

We denote by $ML_b(\mathbb{H} )$ the space of all bounded measured
laminations. The above statement gives a well-defined
\emph{earthquake measure map}
$$
\mathcal{EM}:T(\mathbb{H})\to ML_b(\mathbb{H})
$$
by $\mathcal{EM}([h])=\lambda$, where quasisymmetric map $h$ is
obtained by continuously extending to $S^1$ earthquake $E^{\lambda}$
with earthquake measure $\lambda$. The earthquake measure map is a
bijection by the above. Our main result establishes a natural
topology on $ML_b(\mathbb{H})$ for which $\mathcal{EM}$ is a
homeomorphism.

Each oriented geodesic in $\mathbb{H}$ is uniquely determined by the
pair of its endpoints on $S^1$, the initial point and the terminal
point. Then the space $\mathcal{G}$ of unoriented geodesics in
$\mathbb{H}$ is isomorphic to $(S^1\times S^1\setminus diag)/\sim$,
where $(a,b)\sim (b,a)$ and $diag=\{ (a,a)|a\in S^1\}$. We fix an
angle metric on $S^1$ and obtain an induced metric $d$ on
$\mathcal{G}$. Let $H\ddot{o}l_0$ be the space of all H\"older
continuous functions $\varphi :\mathcal{G}\to\mathbb{R}$ with
compact support. For $0<\nu\leq 1$, let $H\ddot{o}l_0^{\nu}$ be the
space of all $\nu$-H\"older continuous functions $\varphi
:\mathcal{G}\to\mathbb{R}$ with compact support. Let
$Q^{*}=[([-i,1]\times [i,-1])/\sim ]\subset\mathcal{G}$.

Let $test(\nu )$ be the space of pairs $(\varphi ,Q)$ with the
following properties. The function $\varphi
:\mathcal{G}\to\mathbb{R}$ is $\nu$-H\"older continuous and its
support is contained in $Q=([a,b]\times [c,d])/\sim$. The closed
arcs $[a,b],[c,d]\subset S^1$ are disjoint and the \emph{Liouville
measure} $L(Q):=\log\frac{(a-c)(b-d)}{(a-d)(b-c)}$  of $Q$ equals
$\log 2$. If $\gamma_Q:Q^{*}\mapsto Q$ is a M\"obius map, then
$$
\|\varphi\circ\gamma_Q\|_{\nu}<\infty
$$
where $\|\varphi\|_{\nu}$ is the $\nu$-H\"older norm of $\varphi$
(cf. \S 2.4).

The space $\mathcal{H}$ of H\"older distributions consists of all
linear functionals $W:H\ddot{o}l_0\to\mathbb{R}$ such that
$$
\|W\|_{\nu}:=\sup_{(\varphi ,Q)} |W(\varphi )|<\infty
$$
for each $\nu$, $0<\nu\leq 1$, where the supremum is over all
$(\varphi , Q)\in test(\nu )$. The family of $\nu$-norms on
$\mathcal{H}$ induces a Fr\'echet structure on $\mathcal{H}$. The
space of H\"older distributions for closed surfaces is introduced by
Bonahon \cite{Bonahon1}, and generalized in the above form for
geometrically infinite surfaces \cite{Saric3}. The Liouville map
$\mathcal{L}:T(\mathbb{H})\to \mathcal{H}$ given by the pull-backs
of the Liouville measure is an analytic homeomorphism onto its image
(cf. \cite{Bonahon2}, \cite{Saric5}, \cite{Otal}). Bonahon
\cite{Bonahon2} defined Thurston boundary to Teichm\"uller spaces of
closed surfaces using the Liouville map and his construction extends
to geometrically infinite surfaces \cite{Saric5}.

Our main result makes a connection between the Fr\'echet topology on
$\mathcal{H}$ and earthquake maps in the hyperbolic plane. Namely,
we show that the induced Fr\'echet topology on
$ML_b(\mathbb{H})\subset\mathcal{H}$ is capturing the subtleties of
the Teichm\"uller topology on $T(\mathbb{H})$ and the earthquake
maps in the hyperbolic plane $\mathbb{H}$.

\begin{theorem}[Earthquake measure map is a homeomorphism]
\label{thm:EMisHomeo} The earthquake measure map
$$
\mathcal{EM}:T(\mathbb{H})\to ML_b(\mathbb{H})
$$
is a homeomorphism for the Teichm\"uller topology of $T(\mathbb{H})$
and the Fr\'echet topology on $ML_b(\mathbb{H})$.
\end{theorem}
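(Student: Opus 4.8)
The plan is to transport the whole statement into the space $\mathcal{H}$ of H\"older distributions. Since $\mathcal{EM}$ is already a bijection, only bicontinuity is at issue, and since the Liouville map $\mathcal{L}:T(\mathbb{H})\to\mathcal{H}$ is an analytic homeomorphism onto its image, the Teichm\"uller topology on $T(\mathbb{H})$ is carried to the subspace Fr\'echet topology on $\mathcal{L}(T(\mathbb{H}))\subset\mathcal{H}$, while $ML_b(\mathbb{H})$ already sits inside $\mathcal{H}$ with its Fr\'echet topology. So it suffices to show that
$$
\Phi:=\mathcal{L}\circ\mathcal{EM}^{-1}:ML_b(\mathbb{H})\longrightarrow\mathcal{L}(T(\mathbb{H}))\subset\mathcal{H},\qquad\Phi(\lambda)=(E^{\lambda}|_{S^1})^{*}L,
$$
is a homeomorphism for the relevant subspace Fr\'echet topologies. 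The point of this reduction is that \emph{both} topologies are now described by the same family of seminorms $\|\cdot\|_{\nu}$, $0<\nu\le1$, so the problem is a two-sided comparison between $\|\lambda-\mu\|_{\nu}$ and the H\"older seminorms of $\Phi(\lambda)-\Phi(\mu)$.

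For continuity of $\Phi$ I would prove a locally Lipschitz estimate of the shape $\|\Phi(\lambda)-\Phi(\mu)\|_{\nu}\le C(\nu,\|\lambda\|,\|\mu\|)\,\|\lambda-\mu\|_{\nu'}$ for a suitable $\nu'=\nu'(\nu)$, where $\|\lambda\|:=\sup_I\lambda(I)$. Fixing a test pair $(\varphi,Q)\in test(\nu)$ and precomposing with the M\"obius map $\gamma_Q:Q^{*}\to Q$, the conformal invariance of the Liouville measure together with the built-in $\gamma_Q$-normalization of the $\nu$-norms reduces the estimate of $|\Phi(\lambda)(\varphi)-\Phi(\mu)(\varphi)|$ to a single bounded region of $\mathcal{G}$; there one writes this quantity as an integral, over the geodesics meeting $\operatorname{supp}\varphi$, of the difference of the cross-ratios of their images under $E^{\lambda}|_{S^1}$ and $E^{\mu}|_{S^1}$. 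The displacement of the endpoints of such a geodesic is a prescribed function of the composite isometry obtained by integrating the earthquake measure along that geodesic, which depends on the measure in a Lipschitz fashion; and $\sup_I\lambda(I)<\infty$ together with Fr\'echet control of $\lambda-\mu$ bounds all of this uniformly over normalized configurations. This yields the estimate, hence continuity of $\Phi$.

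For continuity of $\Phi^{-1}$ one must recover the earthquake measure from the distortion of cross-ratios, i.e.\ bound $\|\lambda-\mu\|_{\nu}$ by the H\"older seminorms of $\Phi(\lambda)-\Phi(\mu)$, with constants depending only on $\nu,\|\lambda\|,\|\mu\|$. The plan is to use the realization of $\lambda=\mathcal{EM}([h])$ as a limit of discrete transverse measures, supported on finitely many geodesics and read off from the cross-ratio distortions of $h$ (equivalently of $\Phi(\lambda)=h^{*}L$): one shows that these discretizations converge to $\lambda$ \emph{uniformly} over the test pairs of $test(\nu)$ after the same $\gamma_Q$-normalization, and that the limit depends continuously on $\Phi(\lambda)\in\mathcal{H}$. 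Combined with the previous step this shows $\Phi$, hence $\mathcal{EM}$, is a homeomorphism; the infinitesimal versions should follow by linearizing the same estimates at a base point.

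The hard part will be the second (reverse) estimate. The earthquake measure of a transversal is an intrinsically integrated object --- a limit of shears that may partially cancel --- so bounding it by the H\"older seminorms of the Liouville distribution is not pointwise and forces one to control the discretization uniformly. What makes that uniformity delicate is exactly the quantifier over all of $test(\nu)$: a box of Liouville measure $\log2$ placed near the endpoints of a leaf of large weight interacts with the lamination quite differently from one lying inside a complementary region, and only the $\gamma_Q$-normalization in the definition of the $\nu$-norms reconciles the two regimes. Making that normalization do its work, and turning it into the two-sided comparison above, is the core of the argument.
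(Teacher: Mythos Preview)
Your reduction through the Liouville embedding is legitimate, and the two directions you isolate are exactly the issue. But the quantitative program you sketch is genuinely different from the paper's, and the central step is not established.

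The gap is your locally Lipschitz estimate $\|\Phi(\lambda)-\Phi(\mu)\|_{\nu}\le C(\nu,\|\lambda\|,\|\mu\|)\,\|\lambda-\mu\|_{\nu'}$. The sentence ``the displacement of the endpoints \ldots\ depends on the measure in a Lipschitz fashion'' is the whole theorem hiding in one clause. The boundary value $E^{\lambda}(z)$ is not an integral of $\lambda$ but a limit of nested compositions of hyperbolic translations along the leaves between the base stratum and $z$; its dependence on $\lambda$ in the Fr\'echet seminorms is not transparently Lipschitz. Holomorphic-motion analyticity gives you smoothness in the scalar parameter $t\mapsto E^{t\lambda}$, but $\lambda-\mu$ is a signed measure, not a measured lamination, so neither the infinitesimal earthquake formula nor the path $t\mapsto \mu+t(\lambda-\mu)$ linearizes the difference. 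Likewise, your plan for $\Phi^{-1}$ via uniform convergence of discretizations over all $(\varphi,Q)\in\mathrm{test}(\nu)$ is exactly the delicate uniformity you flag but do not resolve.

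The paper avoids both quantitative estimates by a soft compactness argument. It introduces an auxiliary \emph{uniform weak$^*$} topology on $ML_b(\mathbb{H})$ --- weak$^*$ convergence of $\gamma_Q^{*}\lambda_m$ to $\gamma_Q^{*}\lambda$, uniformly over all boxes $Q$ with $L(Q)=\log 2$ --- and shows via Ascoli--Arzel\`a that on Thurston-bounded families this agrees with the Fr\'echet topology. Continuity of $\mathcal{EM}$ then follows from an earlier result that Teichm\"uller convergence forces uniform weak$^*$ convergence of the earthquake measures. For $\mathcal{EM}^{-1}$ the paper argues by contradiction: if $\lambda_m\to\lambda$ in Fr\'echet but $[E^{\lambda_m}|_{S^1}]\not\to[E^{\lambda}|_{S^1}]$, one finds witnessing boxes $Q_m$, renormalizes by $\gamma_{Q_m}$, extracts weak$^*$-convergent subsequences of $\gamma_{Q_m}^{*}\lambda_m$ and $\gamma_{Q_m}^{*}\lambda$ (which must share a limit, by the Fr\'echet hypothesis), and then uses that weak$^*$ convergence of earthquake measures implies pointwise convergence of the normalized earthquakes on $S^1$ to contradict the separated Liouville measures. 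No Lipschitz constant is ever produced. If you want to rescue your outline, the realistic move is to drop the explicit modulus and run this same renormalize-extract-compare scheme inside $\mathcal{H}$; the $\gamma_Q$-normalization you already emphasize is precisely the device that converts Fr\'echet control into uniform weak$^*$ control.
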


The above theorem also holds for any geometrically infinite Riemann
surfaces by simply noting that a quasisymmetric map which is
invariant under a Fuchsian group is induced by an earthquake whose
earthquake measure is invariant under the same Fuchsian group. In
the case of a closed hyperbolic surface $S$, Kerckhoff \cite{Ker}
showed that the earthquake measure map is a homeomorphism for the
weak* topology on $ML(S)$. Using the techniques in the paper, it is
easy to prove that
$\mathcal{EM}:M\ddot{o}b(\mathbb{H})/Homeo(S^1)\to ML(\mathbb{H})$
is a homeomorphism for the topology of pointwise convergence on the
space of homeomorphisms $Homeo(S^1)$ of $S^1$ and the weak* topology
on the (not necessarily bounded) measured laminations
$ML(\mathbb{H})$ of $\mathbb{H}$, where $M\ddot{o}b(\mathbb{H})$ are
M\"obius maps that preserve $\mathbb{H}$. We note that the weak*
topology on $ML_b(\mathbb{H})$ is strictly weaker than the Fr\'echet
topology.

\begin{figure}
\includegraphics[height=5cm]{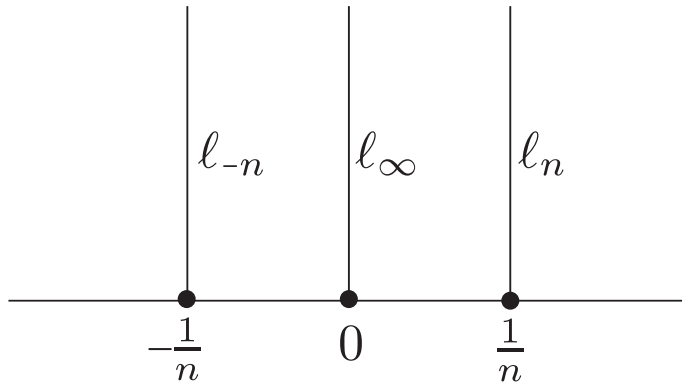}
\caption{$\lambda_n\nrightarrow\lambda$ in the Fr\'echet topology.}
\end{figure}

To illustrate the difference between the weak* topology and the
Fr\'echet topology on $ML_b(\mathbb{H})$ we consider the following
example. Identify the hyperbolic plane $\mathbb{H}$ with the upper
half-plane and its boundary $\partial\mathbb{H}$ with
$\hat{\mathbb{R}}=\mathbb{R}\cup\{\infty\}$. Let $l=(0,\infty
)/\sim$ and $l_n=(\frac{1}{n},\infty )/\sim$ be geodesics in
$\mathbb{H}$. Let $\delta_l$ and $\delta_{l_n}$ denote the Dirac
measures on $\mathcal{G}$ with supports $l$ and $l_n$, respectively.
Then $\frac{\delta_{l_n}+\delta_{l_{-n}}}{2}$ converges in the weak*
topology to $\delta_l$ as $n\to\infty$, but it does \emph{not}
converge in the Fr\'echet topology (Figure \ref{}). See \S 5 for
further discussion and examples.

An earthquake is said to be \emph{finite} if its earthquake measure
has finite support in $\mathcal{G}$. Thurston \cite{Thurston} proved
that the graph of any earthquake $E:\mathbb{H}\to\mathbb{H}$ is
approximated by the graphs of finite earthquakes. Gardiner-Hu-Lakic
\cite{GHL} proved that each monotone map from an $n$-tuple of points
in $S^1$ into $S^1$ can be realized by a finite earthquake whose
support geodesics are in the $n$-tuple (finite earthquake theorem).
We say that an earthquake is \emph{discrete} if the support of its
earthquake measure is a discrete subset of $\mathcal{G}$. Next to
finite earthquakes, discrete earthquakes are the simplest possible
earthquakes and, by definition, finite earthquakes are discrete. We
prove that each earthquake $E$ can be approximated by a sequence of
discrete earthquakes $E_n$ in the sense that $E|_{S^1}\to
E_n|_{S^1}$ in the Teichm\"uller topology as $n\to\infty$. Theorem
below is a direct consequence of Theorem \ref{thm:approximation}
(cf. \S 7.2) and Theorem \ref{thm:EMisHomeo}.

\begin{theorem}[Countable Earthquake Theorem]
\label{thm:CountableEarthquakeTheorem} Let $ML_{b}^{disc}$ be the
set of all bounded measured laminations whose supports are discrete
subsets of $\mathcal{G}$. Then the set
$$
\{ [E^{\lambda}|_{S^1}]:\lambda\in ML_b^{disc}\}
$$
is a dense subset of $T(\mathbb{H})$ in the Teichm\"uller topology.
\end{theorem}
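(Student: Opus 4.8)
The plan is to reduce the statement to the density of $ML_b^{disc}$ in $ML_b(\mathbb{H})$ for the Fréchet topology, and then invoke Theorem~\ref{thm:EMisHomeo}. Indeed, since $\mathcal{EM}:T(\mathbb{H})\to ML_b(\mathbb{H})$ is a homeomorphism, the image of a dense set is dense, and $\mathcal{EM}^{-1}(ML_b^{disc})=\{[E^{\lambda}|_{S^1}]:\lambda\in ML_b^{disc}\}$; so it suffices to show that every bounded measured lamination $\lambda$ is a Fréchet limit of a sequence $\lambda_n\in ML_b^{disc}$ with the bound $\sup_n\sup_I\lambda_n(I)$ controlled in terms of $\sup_I\lambda(I)$. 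This is precisely the content of the referenced approximation result, Theorem~\ref{thm:approximation}; so the proof is essentially a two-line deduction. For completeness I would spell out why such a discrete approximation exists.

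The construction of $\lambda_n$ I have in mind is a \emph{transverse discretization}. Fix a reference point and a sequence of compact exhaustion sets $K_n\subset\mathcal{G}$. On the portion of $\operatorname{supp}(\lambda)$ meeting $K_n$, cut the lamination transversally into thin ``boxes'' by finitely many (or countably many, accumulating only at $\partial K_n$ or at the ends of $\mathcal{G}$) transverse arcs, so that each box has transverse $\lambda$-mass at most $1/n$ and angular diameter at most $1/n$. In each box replace the restriction of $\lambda$ by a single weighted leaf — say the leaf through the ``center'' of the box carrying the total transverse mass of that box. The union over all boxes is a measured lamination $\lambda_n$ whose support is discrete in $\mathcal{G}$ (leaves can accumulate only toward the ends, which is allowed for a discrete subset of the non-compact space $\mathcal{G}$), and $\lambda_n$ is bounded with $\sup_I\lambda_n(I)\le C\sup_I\lambda(I)$ for a universal constant $C$, because lumping transverse mass onto a single leaf within a box of small angular size does not increase the mass intersected by a unit geodesic arc beyond a fixed multiple.

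The main point to verify is the Fréchet convergence $\lambda_n\to\lambda$, i.e.\ that for each $\nu\in(0,1]$ and each test pair $(\varphi,Q)\in test(\nu)$ one has $|\lambda_n(\varphi)-\lambda(\varphi)|\to 0$ uniformly over $test(\nu)$. This is where the angular-size control enters: for a fixed $\nu$-Hölder $\varphi$ supported in $Q$, the difference $\lambda_n(\varphi)-\lambda(\varphi)$ is a sum of box-contributions, each bounded by $\|\varphi\circ\gamma_Q\|_\nu\cdot(\text{box angular size})^{\nu}\cdot(\text{box transverse mass})$ plus the error from the part of $Q$ outside $K_n$. The first type of term is bounded by $(1/n)^{\nu}$ times the total mass of $\lambda$ in $Q$, which is uniformly bounded since $\lambda$ is bounded and $L(Q)=\log 2$ forces $Q$ to have bounded ``width''; the tail term is controlled because $\varphi$ has compact support, so for $n$ large the support of $\varphi$ lies inside $K_n$ and the tail vanishes. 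The delicate part — and the one I expect to be the real obstacle — is getting this estimate \emph{uniform over all $(\varphi,Q)\in test(\nu)$}, i.e.\ over all Möbius images $Q$ of $Q^*$: one must arrange the discretization $\lambda_n$ so that the angular-size bound on boxes holds not just in one chart but after pulling back by every normalizing Möbius map $\gamma_Q$, which is exactly the kind of uniformity the Fréchet norms on $\mathcal{H}$ demand and which the weak* topology does not. Handling this requires choosing the box decomposition with respect to a Möbius-invariant (Liouville-type) notion of size rather than the fixed angle metric, so that the per-box error estimate is automatically uniform; once that is in place the sum of errors is $O((1/n)^{\nu})\cdot\sup_I\lambda(I)$ uniformly over $test(\nu)$, giving $\|\lambda_n-\lambda\|_\nu\to 0$ for every $\nu$, hence Fréchet convergence. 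Combined with Theorem~\ref{thm:EMisHomeo}, this yields the density statement. $\qed$
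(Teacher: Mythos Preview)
Your two-line deduction---reduce to Fr\'echet density of $ML_b^{disc}$ in $ML_b(\mathbb{H})$ (Theorem~\ref{thm:approximation}) and then apply the homeomorphism Theorem~\ref{thm:EMisHomeo}---is exactly the paper's proof of this statement; the paper says nothing more.

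Your supplementary sketch of Theorem~\ref{thm:approximation}, however, diverges from the paper's argument at precisely the point you flag, and the fix you suggest does not work. Making the boxes small in a ``M\"obius-invariant (Liouville-type) notion of size'' is not available: small Liouville measure of $[a,b]\times[c,d]$ means the geodesics $\geodesic{a}{d}$ and $\geodesic{b}{c}$ are \emph{far apart}, not that the arcs $[a,b],[c,d]$ are short, so there is no M\"obius-invariant way to force every box in a locally finite cover of $|\lambda|$ to have small angular diameter simultaneously in every chart $\gamma_Q^{-1}$. (Your compact-exhaustion device is also incompatible with uniformity over $Q$: outside $K_n$ either $\lambda_n=\lambda$, hence not discrete, or $\lambda_n=0$, hence Fr\'echet-far from $\lambda$ for boxes $Q$ lying there.)

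The paper sidesteps this by a different mechanism. It never tries to make the boxes angularly small; it only requires $L(B_k^n)\le\log 2$ and $\lambda_n^1(B_k^n)<1/n$, but it \emph{chooses the extreme geodesics of each $B_k^n$ to be leaves of $|\lambda|$}. This has two consequences you are missing: the replacement leaves $g_k^n$ can be taken in $|\lambda|$ so that $\lambda_n$ is genuinely a measured lamination, and---the crucial combinatorial fact---for any box $Q$ at most two of the $B_k^n$ can meet $Q$ without being contained in it, because leaves do not cross. The convergence is then proved not against H\"older test functions directly but in the uniform-weak$^*$ sense (for a fixed continuous $f$ with support in $Q^*$, uniformly over $Q$): boxes contained in $Q$ with pulled-back sides $<\delta$ contribute $O(\epsilon\,\lambda(Q))$ by uniform continuity of $f$; boxes with a pulled-back side $\ge\delta$ number at most $O(1/\delta)$ and contribute $O(\|f\|_\infty/(n\delta))$; the at most two straddling boxes contribute $O(\|f\|_\infty/n)$. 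Finally, Lemma~\ref{lem:weak-conv_and_Holder} (uniform-weak$^*$ $\Leftrightarrow$ Fr\'echet under a uniform Thurston-norm bound) upgrades this to Fr\'echet convergence.
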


We prove analogous statements for the Zygmund vector fields and the
infinitesimal earthquakes. Let $V$ be a vector field on $S^1$ and
let $Q=([a,b]\times [c,d])/\sim$, called a \emph{box of geodesics},
be a subset of $\mathcal{G}$ such that $[a,b]\cap [c,d]=\emptyset$.
Define
$$V[Q]:=\frac{V(a)-V(c)}{a-c}-\frac{V(a)-V(d)}{a-d}+\frac{V(b)-V(d)}{b-d}
-\frac{V(c)-V(d)}{c-d}.
$$
The \emph{cross-ratio norm} $\| V\|_{cr}$ of a vector field $V$ is
defined by
$$
\| V\|_{cr}:=\sup_Q V[Q],
$$
where the supremum is over all boxes of geodesics $Q=([a,b]\times
[c,d])/\sim$ with $L(Q)=\log 2$. A vector field $V$ on $S^1$ is
\emph{Zygmund bounded} if its cross-ration norm $\| V\|_{cr}$ is
finite. Let $\mathcal{Z}(S^1)$ be the vector space of all Zygmund
bounded vector fields on $S^1$ modulo the closed subspace of
quadratic polynomials. (Note that quadratic polynomials are
infinitesimal deformations of the paths of M\"obius maps.)

A vector field $V$ on $S^1$ is Zygmund bounded if and only if there
exists a differentiable path of quasisymmetric maps $t\mapsto h_t$,
for $|t|<\epsilon$ with $\epsilon >0$, such that $h_0=id$ and
$\frac{d}{dt}h_t|_{t=0}=V$ (see \cite{GL}). Given $\lambda\in
ML_b(\mathbb{H})$, the path $t\mapsto E^{t\lambda}|_{S^1}$ is
differentiable. Its derivative at $t=0$ is a Zygmund bounded vector
field, called the \emph{infinitesimal earthquake}, and we denote it
by
$$
\dot{E}^{\lambda}|_{S^1}:=\frac{d}{dt}(E^{t\lambda}|_{S^1})|_{t=0}.
$$
Gardiner \cite{gardiner} proved that each Zygmund bounded vector
field arises as an infinitesimal earthquake and he also established
the formula (see also \S 9)
$$
\dot{E}^{\lambda}|_{S^1}=\int_{\mathcal{G}}\dot{E}^\lambda_{\ell}d\lambda
(l),
$$
where $\dot{E}^\lambda_{\ell}(z)=\frac{(z-a)(z-b)}{a-b}$ for $z\in
S^1$ with $a$ and $b$ the endpoints of $\ell$ such that the triple
$(a,z,b)$ has positive orientation on $S^1$.

The \emph{infinitesimal earthquake measure map}
$$
\dot{\mathcal{EM}}:ML_b(\mathbb{H})\to\mathcal{Z}(S^1)
$$
defined by
$$
\dot{\mathcal{EM}}:\lambda\mapsto\dot{E}^{\lambda}|_{S^1}
$$
is a bijection. We prove that the Fr\'echet topology on
$ML_b(\mathbb{H})$ makes $\dot{\mathcal{EM}}$ into a homeomorphisms
analogous to the case of quasisymmetric maps.

\begin{theorem}[Fr\'echet and Zygmund] \label{thm:Frechet_and_Zygmund}
Let $\mathcal{ML}_b(\mathbb{D})$ be given the Fr\'echet topology and
$\mathcal{Z}(S^1)$ be given the cross-ratio norm topology. Then, the
infinitesimal earthquake measure map
$$
\dot{\mathcal{EM}}:ML_b(\mathbb{H})\to\mathcal{Z}(S^1)
$$
is a homeomorphism.
\end{theorem}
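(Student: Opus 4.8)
\emph{Strategy.} The plan is to carry out the proof of Theorem~\ref{thm:EMisHomeo} in its linearized form. The crucial simplification is that Gardiner's formula $\dot E^{\lambda}|_{S^1}=\int_{\mathcal G}\dot E^{\lambda}_{\ell}\,d\lambda(\ell)$ has an integrand $\dot E^{\lambda}_{\ell}(z)=\frac{(z-a)(z-b)}{a-b}$ that in fact does not involve $\lambda$; writing it $\dot E_\ell$, the map $\dot{\mathcal{EM}}$ is \emph{linear}, so it extends to the linear span of $ML_b(\mathbb H)$ inside $\mathcal H$, i.e.\ to signed measures $\mu=\lambda_1-\lambda_2$ that are H\"older distributions, and the theorem reduces to establishing, for each $\nu\in(0,1]$, two-sided estimates between $\|\mu\|_\nu$ and $\|\dot{\mathcal{EM}}(\mu)\|_{cr}$ (with the lower bound possibly supplemented by an a priori control on the mass of $\mu$ on compact sets). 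We note that Theorem~\ref{thm:EMisHomeo} alone is not enough here: it gives no quantitative differentiability of $\mathcal{EM}^{\pm1}$ at the basepoint, so the argument must be run directly.

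\emph{Continuity of $\dot{\mathcal{EM}}$.} Fix a box $Q=([a,b]\times[c,d])/\sim$ with $L(Q)=\log2$. Since $V\mapsto V[Q]$ is a fixed finite linear combination of the values $V(a),V(b),V(c),V(d)$, it passes under the vector-valued integral and
$$
\dot E^{\mu}|_{S^1}[Q]=\int_{\mathcal G}\dot E_\ell[Q]\,d\mu(\ell).
$$
If $\ell$ does not separate $[a,b]$ from $[c,d]$ then $a,b,c,d$ lie on a single arc of $S^1\setminus\ell$, on which $\dot E_\ell$ is a quadratic polynomial and is therefore annihilated by $[Q]$; hence the kernel $\ell\mapsto\dot E_\ell[Q]$ is supported on the compact set of geodesics meeting $Q$. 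One then bounds $|\dot E^{\mu}|_{S^1}[Q]|$ by $\|\mu\|_\nu$: since this kernel need not extend continuously across $\partial Q$ and so is not itself a test function, the estimate is obtained by expressing it as a controlled combination of genuine test functions supported on subboxes, each rescaled by a M\"obius map to Liouville mass $\log2$, with H\"older norms bounded uniformly using the auxiliary estimates of \S\,2, and invoking $\|\mu\|_\nu=\sup_{(\varphi,Q')\in test(\nu)}|\mu(\varphi)|$. Taking the supremum over all boxes $Q$ gives $\|\dot{\mathcal{EM}}(\mu)\|_{cr}\le C_\nu\|\mu\|_\nu$; this already yields the infinitesimal analogue of the approximation/density statement (Theorem~\ref{thm:CountableEarthquakeTheorem}).

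\emph{Continuity of $\dot{\mathcal{EM}}^{-1}$.} Suppose $\dot E^{\lambda_n}|_{S^1}\to\dot E^{\lambda}|_{S^1}$ in the cross-ratio norm. Then $\sup_n\|\dot E^{\lambda_n}|_{S^1}\|_{cr}<\infty$, and since the cross-ratio (Zygmund) norm of $\dot E^{\lambda}|_{S^1}$ dominates the bounded-lamination norm $\sup_I\lambda(I)$ — an estimate obtained from Gardiner's formula together with the computations of \S\,2, exactly as in the proof of Theorem~\ref{thm:EMisHomeo} — we get $\sup_n\sup_I\lambda_n(I)<\infty$. By Banach--Alaoglu and a diagonal argument some subsequence converges weak* to a bounded measured lamination $\lambda_\infty$; as each $\dot E_\bullet[Q]$ is a fixed bounded function vanishing off a compact set, $\dot E^{\lambda_\infty}|_{S^1}[Q]=\lim_n\dot E^{\lambda_n}|_{S^1}[Q]=\dot E^{\lambda}|_{S^1}[Q]$ for every $Q$, hence $\dot E^{\lambda_\infty}|_{S^1}=\dot E^{\lambda}|_{S^1}$ and, by injectivity of $\dot{\mathcal{EM}}$, $\lambda_\infty=\lambda$; so $\lambda_n\to\lambda$ in the weak* topology. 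It remains to upgrade this to $\|\lambda_n-\lambda\|_\nu\to0$ for every $\nu$. Given $(\varphi,Q')\in test(\nu)$, one writes $(\lambda_n-\lambda)(\varphi)$ as a combination of finitely many cross-ratio values $\dot E^{\lambda_n-\lambda}|_{S^1}[Q_i]$ (with the $Q_i$ normalized to $L(Q_i)=\log2$) plus a remainder which pairs $\lambda_n-\lambda$ against a fixed compactly supported function and hence tends to $0$ by the weak* convergence together with the uniform mass bound and H\"older equicontinuity; optimizing over the scale of the decomposition drives $\|\lambda_n-\lambda\|_\nu$ to $0$.

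\emph{Main obstacle.} The decisive point is this last step: showing that the cross-ratio norm, rather than merely the weak* topology, controls all the H\"older-distribution seminorms $\|\cdot\|_\nu$ — equivalently, that the elementary functionals $W\mapsto W[Q]$, over boxes of Liouville mass $\log2$, suffice to reconstruct the pairing of $W$ against an arbitrary H\"older test function with quantitative, scale-uniform bounds. This is precisely the phenomenon the introduction flags (weak* is strictly weaker than the Fr\'echet topology), it is the infinitesimal counterpart of the principal difficulty in Theorem~\ref{thm:EMisHomeo}, and it is also why the non-continuity of the kernel $\dot E_\bullet[Q]$ across $\partial Q$ must be handled by a scale-by-scale decomposition rather than in a single estimate.
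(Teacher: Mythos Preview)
Your overall strategy---treating $\dot{\mathcal{EM}}$ as a linear map and seeking two-sided norm estimates---is different from the paper's. The paper argues both directions by compactness and contradiction, exactly paralleling the proof of Theorem~\ref{thm:EMisHomeo}: given a hypothetical failure, it pulls back by M\"obius maps $\gamma_{Q_n}$ sending $Q^*$ to the offending boxes $Q_n$, uses the uniform Thurston-norm bound to extract weak* limits of $\gamma_{Q_n}^*\lambda_n$ and $\gamma_{Q_n}^*\lambda$, invokes Proposition~\ref{prop:convergence_of_integral} to pass to pointwise limits of the vector fields, and then derives a contradiction from Gardiner's injectivity. No explicit decomposition of test functions into cross-ratio kernels is ever needed.

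Your forward direction is reasonable and, if carried out, would give a more explicit bound than the paper's soft argument; the kernel $\ell\mapsto\dot E_\ell[Q]$ is indeed supported in $Q$ and can be controlled by the $\nu$-norm with some care.

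The genuine gap is in your inverse direction, precisely at the step you flag as the ``main obstacle.'' You obtain weak* convergence $\lambda_n\to\lambda$, and then propose to write $(\lambda_n-\lambda)(\varphi)$ as a finite combination of cross-ratio values plus a remainder that ``tends to $0$ by weak* convergence.'' But the remainder is a pairing against a function that depends on $(\varphi,Q')$; weak* convergence gives this tending to $0$ for each \emph{fixed} such function, not uniformly over all $(\varphi,Q')\in\mathrm{test}(\nu)$. The uniformity is exactly what distinguishes the Fr\'echet (equivalently uniform-weak*) topology from the weak* topology, so you are assuming what is to be proved. The H\"older equicontinuity you invoke helps only once you have normalized by $\gamma_{Q'}$---and that normalization varies with $Q'$, which is again the uniform-weak* issue. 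The paper sidesteps this entirely: rather than reconstruct $\varphi$ from cross-ratio kernels, it absorbs the moving box $Q_n$ into a M\"obius change of coordinates, reduces to a \emph{fixed} box $Q^*$, and lets compactness plus the M\"obius invariance of $\|\cdot\|_{cr}$ do the work. If you want to push your quantitative route through, you would need an honest inversion formula expressing an arbitrary H\"older test function as a superposition of the kernels $\dot E_\bullet[Q]$ with scale-uniform coefficient bounds; that is a substantial statement not supplied here.
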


An infinitesimal version of the countable earthquake theorem
immediately follows from Theorem \ref{thm:approximation} in \S 7 and
Theorem \ref{thm:Frechet_and_Zygmund}.

\section{Measured laminations and H\"older distributions}
\subsection{Space of geodesics}
\label{subsec:space_of_geodesics} Let $\mathbb{D}$ be the unit disk
model of the hyperbolic plane $\mathbb{H}$. The unit circle $S^1$ is
identified with the set of ideal boundary points
$\partial\mathbb{D}$ of the hyperbolic plane. Fix
$z_0\in\mathbb{D}$. Define the distance between $z_1,z_2\in S^1$ to
be smaller angle between the geodesic rays connecting $z_0$ with
$z_1$ and $z_2$, respectively. This gives an angle metric on $S^1$
which depends on $z_0$. By varying $z_0\in\mathbb{D}$ we obtain a
biLipschitz class of metrics on $S^1$.

A complete oriented geodesic $g$ in $\mathbb{D}$ is uniquely
determined by an ordered pair of its distinct ideal endpoints on
$S^1$, the initial and the terminal point of $g$. Conversely, given
an ordered pair of points on $S^1$, there is a unique oriented
hyperbolic geodesic with its initial endpoint being the first point
and its terminal endpoint being the second point of the pair. Thus
the space $\tilde{\mathcal{G}}$ of all oriented geodesics on
$\mathbb{D}$ is naturally identified with $S^1\times S^1\setminus
diag$. Let $\mathcal{G}$ be the set of all unoriented complete
hyperbolic geodesic on $\mathbb{D}$. The set $\mathcal{G}$ is
identified with $(S^1\times S^1\setminus diag)/\sim$, where the
equivalence is defined by $(a,b)\sim (b,a)$ and $diag$ is the
diagonal set of the product. We denote by $\geodesic{a}{b}$ the
equivalence class of $(a,b)\in S^1\times S^1\setminus diag$. An
angle metric $d_{z_0}$ on $S^1$ with respect to $z_0\in\mathbb{D}$
induces a metric $\bar{d}_{z_0}$ on $\mathcal{G}$ as follows. Let
$\geodesic{a}{b},\geodesic{c}{d}\in\mathcal{G}$. Define
$\bar{d}_{z_0}(\geodesic{a}{b},\geodesic{c}{d})=\min\{ \max\{
d_{z_0}(a,c),d_{z_0}(b,d)\} , \max\{ d_{z_0}(a,d),d_{z_0}(b,c)\}\}$.
The set of geodesics $\mathcal{G}$ has a biLipschitz class of
metrics obtained by varying $z_0\in\mathbb{D}$.

A quasiconformal map $f:\mathbb{D}\to \mathbb{D}$ continuously
extends to a quasisymmetric map $h:S^1\to S^1$. Mori's theorem
implies that $h$ is a H\"older continuous homeomorphism of $S^1$
whose H\"older constant depends only on the maximal dilatation of
$f$. Thus a quasisymmetric mapping of $S^1$ also induces a H\"older
continuous homeomorphism of $\mathcal{G}$ for the angle metric
$\bar{d}_{z_0}$. Since each quasisymmetric map induces a
biholomorphic isometry of the universal Teichm\"uller space, it is
natural to work with the class of H\"older equivalent metrics to the
metric $\bar{d}_{z_0}$. Recall that a metric $d$ is H\"older
equivalent to $\bar{d}_{z_0}$ if there exist $C\geq 1$ and
$0<\nu\leq 1$ such that
$$
\frac{1}{C}d(\geodesic{x}{y},\geodesic{x_1}{y_1})^{\frac{1}{\nu}}\leq\bar{d}_{z_0}
(\geodesic{x}{y},\geodesic{x_1}{y_1})\leq C
d(\geodesic{x}{y},\geodesic{x_1}{y_1})^{\nu}.
$$

\subsection{Measured laminations}
A \emph{geodesic lamination} $\mathcal{L}$ is a closed subset of
$\mathbb{D}$ together with a foliation by disjoint complete
geodesics. We recall that the information of the foliation of the
closed subset is necessary for the definition of a geodesic
lamination in $\mathbb{D}$. For example, the hyperbolic plane can be
foliated by complete hyperbolic geodesics in infinitely many
different ways and each different foliation determines a different
geodesic lamination. Equivalently, a geodesic lamination
$\mathcal{L}$ is a closed subset of $\mathcal{G}$ such that no two
geodesics in $\mathcal{L}$ intersect in $\mathbb{D}$ (they can have
common ideal endpoints).

Each complete geodesic in $\mathcal{L}$ is called a \emph{leaf} of
$\mathcal{L}$. A \emph{stratum} of $\mathcal{L}$ is either a
geodesic of $\mathcal{L}$ or a component of the complement of
$\mathcal{L}$ in $\mathbb{D}$.

A \emph{measured lamination} $\lambda$ is a positive, locally
finite, Borel measure on the space of geodesics $\mathcal{G}$ whose
support $|\lambda |$ is a geodesic lamination. Each measured
lamination $\lambda$ induces a \emph{transverse measure} to its
support $|\lambda |$, namely an assignment of a positive, Borel
measure to each closed finite hyperbolic arc $I$ in $\mathbb{D}$
whose support is $I\cap |\lambda |$ and which is invariant under
homotopies which preserve the strata of $|\lambda |$. More
precisely, the $\lambda$-mass of an arc $I$, denoted by $\lambda
(I)$, is the $\lambda$-measure of the set of geodesics in
$\mathcal{G}$ which intersect $I$. Conversely, a transverse measure
to a geodesic lamination $\mathcal{L}$ determines a unique measured
lamination $\lambda$ whose support is $\mathcal{L}=|\lambda |$. For
this correspondence we refer the reader to \S1 of \cite{Bonahon1}. A
measured lamination $\lambda$ is \emph{bounded} if the
\emph{Thurston's norm}
$$
\|\lambda\|_{Th}=\sup_{I}\lambda(I)
$$
is finite, where $I$ runs over all geodesic arcs in $\mathbb{D}$
with unit length. Let $\mathcal{ML}_b(\mathbb{D})$ be the set of
bounded measured laminations on $\mathbb{D}$. When the support of a
measured lamination $\lambda$ consists of one geodesic, we say that
$\lambda$ is an \emph{elementary measured lamination}.

M\"obius transformations act isometrically on the set of bounded
measured laminations by the \emph{pull-backs} as follows. Let
$\gamma\in {\rm M\ddot{o}b}(\mathbb{D})$ and $\lambda$ a measured
lamination. We define  $\gamma^*\lambda$ as the measured lamination
with support $\gamma^{-1}(|\lambda|)$ and the transverse measure
$\lambda\circ \gamma$, where $(\lambda\circ
\gamma)(I)=\lambda(\gamma(I))$ for all geodesic arcs $I$. Clearly,
$$
\|\gamma^*\lambda\|_{Th}=\|\lambda\|_{Th}
$$
holds for any measured lamination $\lambda$, and hence ${\rm
M\ddot{o}b}(\mathbb{D})$ acts by isometry on
$\mathcal{ML}_b(\mathbb{D})$.

\subsection{Boxes and the Liouville measure}
\label{subsec:Box_Liouville_measure} The \emph{cross ratio} of a
quadruple $(a,b,c,d)$ is given by
$cr(a,b,c,d)=\frac{(a-c)(b-d)}{(a-d)(b-c)}$. A \emph{box of
geodesics} $Q$ in $\mathcal{G}$ is the quotient under the
equivalence $\sim$ of the product $[a,b]\times [c,d]$ of two
disjoint closed arcs in $S^1$, where $[a,b]$ (resp. $[c,d]$) is the
arc in $S^1$ from $a$ (resp. $c$) to $b$ (resp. $d$) for the
orientation of $S^1$. We will write somewhat incorrectly
$Q=[a,b]\times [c,d]$ instead of a more correct $Q=([a,b]\times
[c,d])/\sim$. The \emph{Liouville measure} $L$ is a canonical,
non-trivial, M\"obius group invariant Borel measure on $\mathcal{G}$
defined by
$$
L(Q)=
\left|\log |cr(a,b,c,d)|\right|=
\left|\log
\left|\frac{(a-c)(b-d)}{(a-d)(b-c)}\right|
\right|
$$
for all boxes $Q=[a,b]\times [c,d]$. The Liouville measure is unique
up to scaling. The infinitesimal form of the Liouville measure on
$\mathcal{G}=(S^1\times S^1\setminus diag)\sim$ is given by (see
\cite{Bonahon2})
$$ dL=\frac{d\alpha
d\beta}{|e^{i\alpha}-e^{i\beta}|^2}.$$
For instance, when we
consider the upper half-plane model $\mathbb{H}$ of the hyperbolic
plane instead of $\mathbb{D}$ and let $Q=[-1,1]\times [e^D,-e^D]$,
the Liouville measure of $Q$ is
\begin{equation} \label{eq:liouville_measure}
L(Q)=-2\log \tanh\frac{D}{2}.
\end{equation}
Thus, for a general square $Q=[a,b]\times [c,d]$, the Liouville
measure $L(Q)$ is inversely related to the hyperbolic distance
between the geodesics $\geodesic{a}{b}$ and $\geodesic{c}{d}$.
Furthermore, a square $Q=[a,b]\times[c,d]$ satisfies $L(Q)=\log 2$
if and only if the distance $D$ between $\geodesic{a}{b}$ and
$\geodesic{c}{d}$ satisfies $e^D=\omega_0$ $(=(1+\sqrt{2})^2)$ if
and only if the distance between $\geodesic{a}{b}$ and
$\geodesic{c}{d}$ equals the distance between $\geodesic{a}{d}$ and
$\geodesic{b}{c}$. A short computation shows that the box
$Q=[-1,1]\times [3+2\sqrt{2},-(3+\sqrt{2})]\subset
(\hat{\mathbb{R}}\times\hat{{\mathbb{R}}}\setminus diag)\sim$ has
the Liouville measure $\log 2$.

We again consider the unit disk model $\mathbb{D}$ of the hyperbolic
plane and define $Q^*=[-i,1]\times [i,1]$. Let
$\ell_{Q^*}=\geodesic{e^{-\pi/4}}{e^{3\pi/4}}\in Q^*$. Let $Q$ be a
box with $L(Q)=\log 2$ and $\gamma_Q$ a M\"obius transformation of
$\mathbb{D}$ with $\gamma_Q(Q^*)=Q$. The geodesic
$\ell_Q:=\gamma_Q(\ell_{Q^*})$ is called the \emph{center} of the
box $Q$.

\subsection{H\"older distribution} Let $d_0$ be the angle metric on $S^1$
with respect to the origin $0\in\mathbb{D}$. Let $d$ be the metric
on $\mathcal{G}$ induced by $d_0$ as in
\S\ref{subsec:space_of_geodesics}. A H\"older continuous function
$\varphi :\mathcal{G}\to\mathbb{R}$ with respect to the fixed metric
$d$ on $\mathcal{G}$ is H\"older continuous for the whole class of
H\"older equivalent metrics to the metric $d$. Unless otherwise
stated, all the constructions that follow are with respect to the
fixed metric $d$ on $\mathcal{G}$.

The space $\hol_0$ consists of all H\"older continuous function
$\varphi:\mathcal{G}\to \mathbb{R}$ with compact support, where
$\mathcal{G}$ is equipped with the fixed metric $d$. Let $0<\nu\leq
1$. For a $\nu$-H\"older continuous function $\varphi$ on
$\mathcal{G}$, we define its $\nu$-norm by
$$
\|\varphi\|_\nu= \max \left\{ \max |\varphi(\geodesic{x}{y})|, \sup
\frac{|\varphi(\geodesic{x}{y})-\varphi(\geodesic{x_1}{y_1})|}{d(\geodesic{x}{y},\geodesic{x_1}{y_1})^\nu}
\right\},
$$
where the maximum inside the brackets is over all
$\geodesic{x}{y}\in \mathcal{G}$ and where the supremum is over all
distinct $\geodesic{x}{y},\geodesic{x_1}{y_1}\in \mathcal{G}$. Let
us denote by $\hol_0^\nu$ the space all $\nu$-H\"older continuous
functions on $\mathcal{G}$ with compact support. Then,
$\hol_0=\cup_{0<\nu\le 1} \hol_0^\nu$.

A $\nu$-\emph{test function} is a pair $(\varphi,Q)$, where $Q$ is a
box of geodesics and $\varphi$ is a H\"older continuous function
such that $L(Q)=\log 2$, ${\rm supp}(\varphi)\subset Q$ and
$\|\varphi\circ \gamma_Q\|_\nu\le 1$. Recall that $\gamma_Q$ is a
unique M\"obius mapping which maps $Q^{*}=[-i,1]\times [i,-1]$ onto
$Q$. We denote by ${\rm test}(\nu)$ the set of $\nu$-test functions.

A $\nu$-\emph{H\"older distribution} is a linear functional $W$ on
$\hol_0^{\nu}$ such that
$$
\|W\|_\nu:=\sup\{|W(\varphi)|\mid (\varphi,Q)\in {\rm
test}(\nu)\}<\infty .
$$
A \emph{H\"older distribution} is a linear functional $W$ on
$\hol_0$ such that
$$
\| W\|_{\nu}<\infty
$$
for all $0<\nu\le 1$. In general, the $\nu$-H\"older norms $\|
W\|_{\nu}$ of a fixed H\"older distribution $W$ can increase without
a bound as $\nu\to 0$. Let $\mathcal{H}^\nu$ be the set of all
linear functionals $W$ on $\hol_0$ with $\|W\|_\nu<\infty$. Then
$\mathcal{H}^{\nu}$ is a Banach space for the $\nu$-norm
$\|\cdot\|_{\nu}$. The space $\mathcal{H}$ of all H\"older
distributions is equal to $\cap_{0<\nu\le 1}\mathcal{H}^\nu$. Each
$\|\cdot\|_{\nu}$ is a norm (i.e. is non-degenerate) on
$\mathcal{H}$, but $(\mathcal{H},\|\cdot\|_{\nu})$ is not a complete
space. The family of $\nu$-norms makes $\mathcal{H}$ into a
Fr\'echet space. Note that $\mathcal{H}$ is invariant under
quasisymmetric changes of coordinates on $S^1$ because
quasisymmetric maps are H\"older continuous, while each
$\mathcal{H}^{\nu}$ is not invariant. For more details, see
\cite{Saric3}.

\medskip
\paragraph{{\bf Special Test Functions}}
For the later use, we shall define a special test function
$(\psi_{0,\nu},Q^*)$ ($0<\nu\le 1$) as follows. Let
\begin{equation} \label{eq:small_square}
Q^*_0=[\omega_1^{13},\omega_1^{15}]\times [\omega_1^5,\omega_1^7]
\end{equation}
where $\omega_1=e^{i\pi/8}$ is a $16$-th root of unity. We now fix a
$C^\infty$ function $\varphi_0$ on $\mathcal{G}$ with the properties
that $\varphi_0\equiv 1$ on $Q^*_0$, $0\le \varphi_0\le 1$ and ${\rm
supp}(\varphi_0)\subset Q^*$. Since $\varphi_0$ is a Lipschitz
function and
\begin{equation*} \label{eq:Holder_comparizon}
\|\varphi_0\|_\nu\le (\pi/2)^{1-\nu}\|\varphi_0\|_1
\end{equation*}
for all $\nu$ with $0<\nu\le 1$ (cf. the equation (8) in \cite{Saric3}),
we have
\begin{equation} \label{eq:special_test_function}
(\psi_{0,\nu},Q^*):=(((\pi/2)^{1-\nu}\|\varphi_0\|_1)^{-1}\varphi_0,Q^*)
\in {\rm test}(\nu).
\end{equation}
Notice that $L(Q_0^*)=\log(4/(\sqrt{2}+2))$.

\subsection{Bounded measured laminations as H\"older distributions}
\label{subsec:bounded_measured_laminations_Holder} A \emph{Radon
measure} on a topological space is a locally finite Borel measure
with the inner regularity. It is known that any locally finite Borel
measure on a Suslin space (for instance, a separable and complete
metrizable space) is a Radon measure (cf. Theorem 11 of Chapter II
in \cite{Schwartz}).

\subsubsection{Weak* convergence}
We say that a sequence $\{\lambda_n\}_{n=1}^\infty$ of Borel
measures on $\mathcal{G}$ \emph{converges in the weak* topology} to
a Borel measure $\lambda$ if for all continuous function $f$ with
compact support on $\mathcal{G}$, it holds
$$
\lim_{n\to \infty}\int_{\mathcal{G}}f\,d\lambda_n=
\int_{\mathcal{G}}f\,d\lambda.
$$
(This convergence is sometimes called the \emph{vague convergence},
but we call it the weak* convergence here.)

\subsubsection{Measures of squares}
The following lemma is well-known.
However we give a proof for readers convenience.

\begin{lemma}[Comparison with Thurston norm] \label{lem:Thurston_norm_and_Holder_distribution}
There is a universal constant $C_0$ such that
for any measured lamination $\lambda$,
we have
$$
\frac{1}{C_0}\|\lambda\|_{Th}\le
\sup_Q\lambda(Q)
\le
\|\lambda\|_{Th},
$$
where
the supremum is taken over all boxes $Q$ with $L(Q)=\log 2$.
\end{lemma}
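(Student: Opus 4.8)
The plan is to prove the two inequalities separately, in each case collapsing an arbitrary box (resp.\ an arbitrary unit-length geodesic arc) to a single fixed model by a M\"obius change of variable and then invoking compactness of a subset of $\mathcal{G}$; this yields the estimate with a universal constant $C_0$ on both sides (I note that the sharp form, with $C_0=1$ on the right, is false — see the last paragraph). I use freely that ${\rm M\ddot{o}b}(\mathbb{D})$ preserves the Liouville measure, carries any unit-length geodesic arc onto any other and any box with $L=\log 2$ onto any other, and acts on $\mathcal{ML}_b(\mathbb{D})$ by $\|\cdot\|_{Th}$-isometric pull-backs with $(\gamma^{*}\lambda)(E)=\lambda(\gamma(E))$ for Borel $E\subset\mathcal{G}$. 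I also use that for a unit-length geodesic arc $I$ the set $K_I:=\{g\in\mathcal{G}:g\cap I\neq\emptyset\}$ is a compact subset of $\mathcal{G}$ (it is closed, since a limit of geodesics meeting the compact segment $I$ meets $I$, and it consists of geodesics staying a bounded distance from a fixed point of $\mathbb{D}$) and that $\lambda(I)=\lambda(K_I)$.

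For the upper bound, fix a box $Q$ with $L(Q)=\log 2$ and the M\"obius map $\gamma_Q\colon Q^{*}\to Q$ of \S\ref{subsec:Box_Liouville_measure}; since $\lambda(Q)=(\gamma_Q^{*}\lambda)(Q^{*})$ and $\|\gamma_Q^{*}\lambda\|_{Th}=\|\lambda\|_{Th}$, it suffices to bound $\mu(Q^{*})$ by a universal multiple of $\|\mu\|_{Th}$ for the \emph{fixed} reference box $Q^{*}$. The set $Q^{*}$ is compact in $\mathcal{G}$, and for each $g\in Q^{*}$ I choose an interior point $P$ of $g$ and let $I_g$ be the unit-length sub-arc, centered at $P$, of the geodesic through $P$ perpendicular to $g$. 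Then $g$ crosses $\operatorname{int}I_g$ transversally, and such a crossing survives small perturbations of $g$ in $\mathcal{G}$, so all geodesics near $g$ meet $I_g$; hence $g\in\operatorname{int}K_{I_g}$, and $\{\operatorname{int}K_{I_g}\}_{g\in Q^{*}}$ is an open cover of $Q^{*}$. A finite subcover indexed by $g_1,\dots,g_N$ — with $N$ depending only on $Q^{*}$, hence universal — gives, by subadditivity, $\mu(Q^{*})\le\sum_{j=1}^{N}\mu(K_{I_{g_j}})=\sum_{j=1}^{N}\mu(I_{g_j})\le N\,\|\mu\|_{Th}$.

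For the lower bound it suffices to prove $\lambda(I)\le N'\sup_Q\lambda(Q)$ for every unit-length geodesic arc $I$, the supremum over boxes with $L(Q)=\log2$. Write $I=\gamma(I_0)$ for a fixed model arc $I_0$; then $\lambda(I)=(\gamma^{*}\lambda)(I_0)$, and since $\gamma$ permutes the boxes with $L=\log2$ one has $\sup_Q(\gamma^{*}\lambda)(Q)=\sup_Q\lambda(Q)$, so it suffices to bound $\mu(I_0)=\mu(K_{I_0})$ by a universal multiple of $\sup_Q\mu(Q)$ for the fixed compact set $K_{I_0}$. The key local fact is that every geodesic $g$ lies in the \emph{interior} of some box with Liouville measure $\log2$: as one grows symmetrically a pair of disjoint closed arcs about the two ideal endpoints of $g$, the Liouville measure of the resulting box increases continuously from $0$ to $\infty$, so by the intermediate value theorem it takes the value $\log 2$ for some choice, with $g$ interior. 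The interiors of these boxes form an open cover of the compact set $K_{I_0}$; a finite subcover $Q_1,\dots,Q_{N'}$, with $N'$ depending only on $I_0$, gives $\mu(I_0)\le\sum_{j=1}^{N'}\mu(Q_j)\le N'\sup_Q\mu(Q)$. Taking $C_0=\max(N,N')$ completes the estimate.

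The two compactness steps are routine once assembled; the real content is the M\"obius reduction, which is what makes the counts $N$ and $N'$, attached to the single objects $Q^{*}$ and $I_0$, valid for all boxes and all unit arcs at once. The point I would flag as the main obstacle — and the reason a bounded \emph{family} of test objects, rather than one, is unavoidable — is that a box with $L(Q)=\log 2$ need not consist of geodesics through a common transversal of length $\le1$: for instance $\geodesic{-i}{-1}$ and $\geodesic{1}{i}$ both belong to $Q^{*}$ yet are disjoint geodesics at hyperbolic distance $2\log(1+\sqrt{2})>1$, so no single unit arc meets both, and one genuinely must cover $Q^{*}$ by several arcs (and, dually, $K_{I_0}$ by several boxes).
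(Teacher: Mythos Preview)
Your proof is correct and takes a genuinely different route from the paper's. The paper argues each inequality by an explicit hyperbolic-geometric construction: for the left inequality it uses the lamination hypothesis (leaves are pairwise disjoint) to replace an arbitrary unit transversal $I$ by a perpendicular arc $J$ of universally bounded length through a chosen leaf $\ell$, and then traps all leaves meeting $J$ in a single box; for the right inequality it observes that every geodesic in $Q=[a,b]\times[c,d]$ crosses the common perpendicular $I$ between $\geodesic{a}{d}$ and $\geodesic{b}{c}$, asserting this perpendicular has length $\log 2<1$. Your argument, by contrast, is pure point-set topology---a M\"obius reduction to one fixed model object followed by a finite cover of a compact subset of $\mathcal{G}$---and never uses that $|\lambda|$ is a lamination, so it applies verbatim to any locally finite Borel measure on $\mathcal{G}$. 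You also catch a real slip: for $L(Q)=\log 2$ the common perpendicular between $\geodesic{a}{d}$ and $\geodesic{b}{c}$ has length $2\log(1+\sqrt 2)\approx 1.76$, not $\log 2$, so the constant $1$ on the right is indeed false (your two-atom example in $Q^{*}$ gives $\lambda(Q^{*})=2>\|\lambda\|_{Th}=1$), and the lemma should read $\sup_Q\lambda(Q)\le C_0\|\lambda\|_{Th}$ on that side as well. The trade-off is that the paper's geometric approach aims at small, essentially explicit constants, while yours yields a larger non-explicit $C_0$ but is cleaner, more robust, and immediately reusable for the general measures appearing in Lemma~\ref{lem:Holder_top_and_Measure_top}.
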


\begin{proof}
Let $I$ be a geodesic arc in $\mathbb{D}$ of the unit length which
intersects transversely a leaf $\ell$ of $\lambda$. Since the
support $|\lambda|$ consists of disjoint geodesics, there is a
universal constant $L_0$ with the following property: Let $J$ be a
geodesic arc in $\mathbb{D}$ of length $L_0$ which is orthogonal to
$\ell$ at the midpoint of $J$ and let the midpoint of $J$ be equal
to $I\cap \ell$. Then, any leaf of $|\lambda|$ with non-trivial
intersection with $I$ also intersects $J$.

One can check that any leaf of $|\lambda|$ ($\subset \mathcal{G}$)
which intersects $J$ is contained in a box $Q'$ with center $\ell$
satisfying $L(Q')=2\log \cosh(L_0/2)$. To see this, we identify
$\mathbb{D}$ with the upper half-plane $\mathbb{H}$ and normalize
$J$ and $\ell$ such that $J=[1,e^{L_0}]i$ and
$\ell=\{|z|=e^{L_0/2}\}\cap \mathbb{H}$. Any complete geodesic which
is disjoint from $\ell$ and which intersects $J$ is in the box
$Q'=[e^{3L_0/2},-e^{L_0/2}]\times [e^{L_0/2},e^{3L_0/2}]$. This
means that $\lambda(I)\le \lambda(J)\le \lambda(Q')$ and hence we
conclude
$$
\|\lambda\|_{Th}\le C_0\sup_Q\lambda(Q)
$$
with universal constant $C_0>0$,
where the supremum runs over all boxes $Q$ with $L(Q)=\log 2$.

To show the converse, let $Q=[a,b]\times [c,d]$ be a box  in
$\mathcal{G}$. The measure $\lambda (Q)$ is obtained as follows.
Suppose for the simplicity that $a$, $b$, $c$ and $d$ are lying on
$S^1$ in this order. Let $\ell_1=\geodesic{a}{d}$ and
$\ell_2=\geodesic{b}{c}$ and $I$ the geodesic segment which
intersects orthogonally to $\ell_1$ and $\ell_2$ at endpoints. Then,
any complete geodesic in $Q$ intersects $I$. Since the length of $I$
is $\log 2<1$, there is a geodesic arc $I'$ of unit length which
contains $I$ and hence we obtain
$$
\lambda(Q)\le \lambda(I')\le \|\lambda\|_{Th},
$$
for all boxes $Q$ with $L(Q)=\log 2$ which implies the desired
inequality.
\end{proof}

\subsection{H\"older distributions defined from measures}
Any $\lambda\in \mathcal{ML}_b(\mathbb{D})$ induces a H\"older
distribution by the formula
$$
\hol_0\ni \varphi\mapsto \int_\mathcal{G}\varphi d\lambda.
$$
Indeed,
by definition
and Lemma \ref{lem:Thurston_norm_and_Holder_distribution},
we have
$$
\|\lambda\|_\nu=
\sup_{(\varphi,Q)\in {\rm test}(\nu)}
\left|\int_Q\varphi d\lambda\right|
\le \sup_Q\lambda (Q)
\le \|\lambda\|_{Th}
$$
for all $0<\nu\le 1$, where in the third term, $Q$ runs over all
boxes $Q$ with $L(Q)=\log 2$. Thus the above formula gives a natural
inclusion of $\mathcal{ML}_b(\mathbb{D})$ into $\mathcal{H}$.

The following lemma extends the above equivalence of norms to any
locally finite Borel measure on $\mathcal{G}$.

\begin{lemma} \label{lem:Holder_top_and_Measure_top}
Let $\lambda$ be a locally finite Borel measure on $\mathcal{G}$.
Then the induced linear functional
\begin{equation} \label{eq:Holder-lambda}
\lambda:\hol_0\ni \varphi\mapsto \int_\mathcal{G}\varphi d\lambda
\end{equation}
is a H\"older distribution if and only if $\sup_Q\lambda
(Q)<\infty$, where the supremum is over all boxes $Q$ with
$L(Q)=\log 2$. In this case, there is a universal constant $C_1>0$
such that
$$
\|\lambda\|_\nu\le \sup_Q\lambda (Q)\le
 C_1\|\lambda\|_\nu,
$$
for all $\nu$ with $0<\nu\le 1$, where $L(Q)=\log 2$, and
$\|\lambda\|_\nu$ is the $\nu$-norm of the H\"older distribution
\eqref{eq:Holder-lambda}.
\end{lemma}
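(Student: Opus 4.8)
The plan is to reduce this to Lemma \ref{lem:Thurston_norm_and_Holder_distribution}, whose proof did not actually use that $\lambda$ was a measured lamination—only that $\lambda$ is a positive Borel measure on $\mathcal{G}$. Let me re-examine which direction actually needs work. The inequality $\|\lambda\|_\nu\le\sup_Q\lambda(Q)$ is immediate: for $(\varphi,Q)\in\mathrm{test}(\nu)$ we have $|\varphi|\le\|\varphi\|_\infty\le\|\varphi\circ\gamma_Q\|_\nu\le 1$ on its support (using that $\gamma_Q$ is a bijection of $\mathcal{G}$, so the sup-part of the norm is unaffected by precomposition), hence $|\int_Q\varphi\,d\lambda|\le\lambda(Q)$. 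Taking suprema over $\mathrm{test}(\nu)$ and then over all $Q$ with $L(Q)=\log 2$ gives the left inequality for every $\nu$, and in particular shows the functional in \eqref{eq:Holder-lambda} is a $\nu$-Hölder distribution whenever $\sup_Q\lambda(Q)<\infty$. So the ``if'' direction and the left inequality are routine.

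The substance is the ``only if'' direction together with the right inequality $\sup_Q\lambda(Q)\le C_1\|\lambda\|_\nu$, and here the key point is that this bound must hold \emph{for each individual $\nu$}, not merely after intersecting over all $\nu$. So I cannot simply invoke Lemma \ref{lem:Thurston_norm_and_Holder_distribution}; I need to produce, for a given box $Q$ with $L(Q)=\log 2$, a single test function $(\varphi,Q')\in\mathrm{test}(\nu)$ with $Q'$ of the same ``shape'' as $Q$ and $\varphi$ bounded below by a universal constant on a sub-box that still captures a definite fraction of $\lambda(Q)$. This is exactly what the \textbf{Special Test Functions} construction is for: the fixed $C^\infty$ function $\varphi_0$ equals $1$ on $Q_0^*$ and is supported in $Q^*$, and $(\psi_{0,\nu},Q^*)=(((\pi/2)^{1-\nu}\|\varphi_0\|_1)^{-1}\varphi_0,Q^*)\in\mathrm{test}(\nu)$ with $\|\psi_{0,\nu}\|_\infty\ge((\pi/2)^{1-\nu}\|\varphi_0\|_1)^{-1}\ge(\|\varphi_0\|_1\cdot\pi/2)^{-1}=:c_0>0$ uniformly in $\nu$, since $0\le (\pi/2)^{1-\nu}\le\pi/2$.

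The argument I would carry out: first, by Möbius invariance of $L$ and transitivity of $\mathrm{M\ddot{o}b}(\mathbb{D})$ on boxes of a fixed Liouville measure (this transitivity is implicit in the existence and uniqueness of $\gamma_Q$), it suffices to bound $\lambda(Q^*)$ by $C_1\|\lambda\|_\nu$ for a \emph{reference} box; for a general $Q$ with $L(Q)=\log 2$ one replaces $\lambda$ by the pull-back $\gamma_Q^*\lambda$, notes $\|\gamma_Q^*\lambda\|_\nu=\|\lambda\|_\nu$ (the set $\mathrm{test}(\nu)$ is invariant under $\varphi\mapsto\varphi\circ\gamma_Q$, $Q'\mapsto\gamma_Q^{-1}(Q')$, because the composition $\gamma_{Q'}$ transforms correctly), and $\gamma_Q^*\lambda(Q^*)=\lambda(Q)$. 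Second, cover $Q^*$ by finitely many (a universal number $N$ of) sub-boxes $R_1,\dots,R_N$, each of which is the image of $Q_0^*$ under a Möbius map—this is possible because $Q_0^*=[\omega_1^{13},\omega_1^{15}]\times[\omega_1^5,\omega_1^7]$ is strictly smaller than $Q^*$, so finitely many Möbius translates of $Q_0^*$ cover the compact set $Q^*\cap\mathcal{G}$. For each $j$ let $\varphi_0^{(j)}=\varphi_0\circ(\text{the Möbius map sending }Q^*\text{ to the box with center matching }R_j)$, arranged so that $\varphi_0^{(j)}\equiv 1$ on $R_j$ and supported in a box $Q^{(j)}$ with $L(Q^{(j)})=\log 2$. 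Then $(c_j\varphi_0^{(j)},Q^{(j)})\in\mathrm{test}(\nu)$ with $c_j\ge c_0$ uniform in $\nu$, so
$$
\lambda(R_j)\le\int\varphi_0^{(j)}\,d\lambda=c_j^{-1}\,\lambda(c_j\varphi_0^{(j)})\le c_0^{-1}\|\lambda\|_\nu,
$$
and summing over $j$ gives $\lambda(Q^*)\le\sum_j\lambda(R_j)\le N c_0^{-1}\|\lambda\|_\nu=:C_1\|\lambda\|_\nu$, with $C_1=Nc_0^{-1}$ universal and independent of $\nu$. In particular if the functional \eqref{eq:Holder-lambda} is a Hölder distribution (so some $\|\lambda\|_\nu<\infty$), then $\sup_Q\lambda(Q)\le C_1\|\lambda\|_\nu<\infty$, proving the ``only if'' direction.

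The main obstacle is the bookkeeping in step two: one must check that the covering sub-boxes $R_j$ can be chosen so that each carries a \emph{legitimate} test function of the form (universal constant)$\times(\varphi_0\circ\text{M\"obius})$ supported in a box of Liouville measure exactly $\log 2$, and that the number $N$ of such boxes and the lower bound $c_0$ are genuinely universal (independent of $\nu$ and of the original $Q$). The uniformity in $\nu$ is the whole reason the statement is stronger than Lemma \ref{lem:Thurston_norm_and_Holder_distribution}, and it hinges precisely on the crude but $\nu$-uniform bound $(\pi/2)^{1-\nu}\le\pi/2$ built into the Special Test Function normalization. Everything else—Möbius invariance of $L$ and of $\mathrm{test}(\nu)$, compactness of $Q^*\cap\mathcal{G}$, finiteness of subadditive bounds—is routine once that observation is in place.
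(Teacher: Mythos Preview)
Your proposal is correct and follows essentially the same route as the paper: the easy inequality $\|\lambda\|_\nu\le\sup_Q\lambda(Q)$ via $|\varphi|\le 1$, then for the converse the use of the special test function $\psi_{0,\nu}$ together with a finite covering of $Q^*$ by M\"obius images of $Q_0^*$, the $\nu$-uniformity coming precisely from the bound $(\pi/2)^{1-\nu}\le\pi/2$. The paper's write-up is terser but structurally identical to what you outline.
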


\begin{proof}
From \eqref{eq:special_test_function},
we obtain
$$
\lambda (Q^*_0)\le
\int_{Q^*}\varphi_0d\lambda\le
((\pi/2)^{1-\nu}\|\varphi_0\|_1)\|\lambda\|_\nu
\le C'_1\|\lambda\|_\nu,
$$
where $C'_1$ is a universal constant.
Since $Q^*$ is covered by finitely many boxes
which are the images of $Q^*_0$ under M\"obius transformations,
by applying the argument above to $(\gamma_Q)^*\lambda$
and $\varphi_0\circ \gamma_Q^{-1}$
instead of $\lambda$ and $\varphi_0$,
we conclude that
$$
\lambda (Q)\le C_1\|\lambda\|_\nu.
$$
for all $Q$ with $L(Q)=\log 2$,
where $C_1$ is a universal constant.
The left-hand side follows from the standard argument.
Indeed, since $\|\varphi \circ \gamma_Q\|_\nu\le 1$,
$\sup_Q|\varphi|\le 1$ and hence
for any $\epsilon>0$,
we can take $(\varphi,Q)\in {\rm test}(\nu)$
such that
$$
\|\lambda\|_\nu
\le \left|
\int_Q \varphi d\lambda
\right|+\epsilon
\le \lambda(Q)+\epsilon
\le \sup_Q \lambda(Q)+\epsilon,
$$
which implies what we wanted.
\end{proof}

\section{Earthquakes and Earthquake measures}
\subsection{Earthquakes}
Let $\mathcal{L}$ be a geodesic lamination in $\mathbb{D}$. An
\emph{earthquake} $E$ with the support $\mathcal{L}$ is a surjective
map $E:\mathbb{D}\to \mathbb{D}$ such that $E$ is a hyperbolic
isometry when restricted to any stratum of $\mathcal{L}$ and, for
any two strata $A$ and $B$, the \emph{comparison isometry}
$$
\rm{cmp}(A,B)= (E\mid_A)^{-1}\circ E\mid_B
$$
is a hyperbolic translation whose axis weakly separates $A$ and $B$,
and which translates $B$ to the left as seen from $A$. An earthquake
$E$ of $\mathbb{D}$ continuously extends to a homeomorphism of the
boundary $S^1$ (see \cite{Thurston}). We denote by $E\mid_{S^1}$ the
extension.

Given an earthquake $E$ with support $\mathcal{L}$, there is an
associated positive transverse measure $\lambda$ to $\mathcal{L}$ as
follows. Let $I$ be a closed geodesic arc transversely intersecting
$\mathcal{L}$ with arbitrary orientation. For given $n$, choose a
closed geodesic arc $I_n$ which contains $I$ in its interior such
that $I_{n+1}\subsetneq I_n$ and $\cap_nI_n=I$. Furthermore, choose
strata $\mathcal{A}_n=\{A_0,A_1,\cdots ,A_{k(n)},A_{k(n)+1}\}$ of
the support of $E$ such that $A_0$ contains the left end point of
$I_n$, $A_1$ contains the left endpoint of $I$, $A_{k(n)}$ contains
the right endpoint of $I$, $A_{k(n)+1}$ contains the right endpoint
of $I_n$, $A_i$'s intersect $I$ in the given order and the maximum
of the distances between the consecutive intersections of
$\mathcal{A}_n$ with $I_n$ goes to zero as $n\to\infty$. The
summation of the translation lengths of the comparison isometries
$\rm{cmp}(A_i,A_{i+1})=(E\mid_{A_i})^{-1}\circ E\mid_{A_{i+1}}$ for
$i=0,1,\cdots ,k(n)+1$ is the approximate measure of $I$. If $n\to
\infty$ and $\mathcal{A}_n$ are chosen such that
$(\cup_{i=1}^{k(n)}A_i)\cap I$ is dense in $I$ for all $n$, the
limit of approximate measure is a well-defined positive finite Borel
measure (\cite{Thurston} and \cite{GHL}). (Note that if
$E:\mathbb{D}\to\mathbb{D}$ is continuous at the endpoints of $I$
then we can replace $I_n$ with $I$ for each $n$ in the above
construction.) This transverse measure defines a measured lamination
$\lambda$ with support $\mathcal{L}$. We call the measured
lamination $\lambda$ the \emph{earthquake measure} for $E$. We
denote by $E^\lambda$ a earthquake map with earthquake measure
$\lambda$. An earthquake map is (essentially) uniquely determined by
its earthquake measure. The ambiguity is up to post-composition of
the earthquake map by a M\"obius map and on each leaf where the
earthquake has a discontinuity there is a range of possibilities
(but the extension to $S^1$ gives the same map regardless of the
choices in this range.) The set of strata where an earthquake map
has a discontinuity consists of at most countable family of leaves
of $\mathcal{L}$.

In \cite{Thurston}, Thurston showed that for any orientation
preserving homeomorphism $h$ on $\partial \mathbb{D}$, there is a
unique earthquake map $E^{\lambda}$ such that $h=E^\lambda|_{S^1}$.
Thurston's theorem induces an injective map from the space of right
cosets of ${\rm M\ddot{o}b}(\mathbb{D})$ in the group of orientation
preserving homeomorphisms into the space of measured laminations in
$\mathbb{D}$ by the formula ${\rm M\ddot{o}b}(\mathbb{D})\circ
h\mapsto \lambda$ where $h=E^{\lambda}|_{S^1}$.

For an orientation preserving homeomorphism $h:S^1\to S^1$ and the
earthquake map $E^{\lambda}|_{S^1}=h$, we have that $h\circ\gamma
=E^{\gamma^{*}(\lambda )}|_{S^1}$ for any $\gamma\in {\rm
M\ddot{o}b}(\mathbb{D})$.

\subsection{Convergence of earthquakes}
\label{subsec:remark1} Notice from the definition that for any
$\gamma\in {\rm M\ddot{o}b}(\mathbb{D})$, the earthquake measure of
$\gamma\circ E$ coincide with that of $E$. Hence, $E^\lambda$ is
determined \emph{up to} postcomposition of  M\"obius
transformations. Because of this ambiguity, we should give a remark
on the symbol $E^\lambda$. Namely, when $E^\lambda$ is treated as a
map, this $E^\lambda$ is always chosen suitably for the content. For
instance, we have used the equation ``$h=E^\lambda$" with a
homeomorphism $h$ on $S^1$.

This equation means that we can choose an earthquake map
with earthquake measure $\lambda$ which coincides with $h$
on $S^1$.
When we say that ``$E^{\lambda_n}\to E^\lambda$ as $n\to \infty$",
a sequence consisting of choices of the earthquake maps for $\lambda_n$ ($n\in \mathbb{N}$)
converges to one of those for $\lambda$.

\section{The universal Teichm\"uller space and the Earthquake measure map}
\subsection{Quasisymmetic maps}
An orientation preserving homeomorphism $h$ is said to be a
\emph{quasisymmetric} if there is a constant $M\geq 1$ such that
\begin{equation} \label{eq:quasisymetric}
\frac{1}{M}\le \frac{|h(J_1)|}{|h(J_2)|}\le M
\end{equation}
for all adjacent intervals $J_1, J_2\subset S^1$ with $|J_1|=|J_2|$,
where $|J_i|$ is the arc length with respect to the angle measure on
$S^1=\partial \mathbb{D}$. Let $\mathcal{QS}$ be the set of all
quasisymmetic maps on $S^1$. The \emph{universal Teichm\"uller
space} $T(\mathbb{D})$ is the quotient space
$$
T(\mathbb{D})=
{\rm M\ddot{o}b}(\mathbb{D})\backslash \mathcal{QS}
$$
where the group ${\rm M\ddot{o}b}(\mathbb{D})$ of M\"obius
transformations acts on $\mathcal{QS}$ via post-compositions. For
any $h\in\mathcal{QS}$, we denote by $[h]$ its class in
$T(\mathbb{D})$. The universal Teichm\"uller space $T(\mathbb{D})$
admits a natural (metric) topology inherited from the maximal
dilatations. Namely, two quasisymetric maps $h_1$ and $h_2$ are
close if there exists a quasiconformal extension of $h_2\circ
h_1^{-1}$ whose maximal dilatation is near one. This topology on
$T(\mathbb{D})$ is the same one inherited from quasisymetric
constants. See \cite{DE} or \cite{GL}.

\subsection{The earthquake measure map}
In this subsection, we define the earthquake measure map. We first
recall the following theorem, which is proved by Gardiner-Hu-Lakic
\cite{GHL} and in \cite{Saric2}.

\begin{theorem}[Gardiner-Hu-Lakic, \v{S}ari\'c]
\label{thm:earthquake_measure_map} Let $h$ be an orientation
preserving homeomorphism $h$ of $\partial \mathbb{D}=S^1$ and let
$E^{\lambda}$ be the earthquake of $\mathbb{D}$ whose continuous
extension to $S^1$ equals $h$. Then the following are equivalent.
\begin{itemize}
\item[(1)]
The earthquake measure $\lambda$ of the earthquake
$E^{\lambda}|_{S^1}=h$ is bounded.
\item[(2)]
$h$ is quasisymmetric.
\end{itemize}
\end{theorem}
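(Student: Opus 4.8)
The plan is to prove the two implications separately, reducing both to a single comparison --- carried out over boxes of geodesics $Q$ with $L(Q)=\log 2$ --- between the $\lambda$-mass $\lambda(Q)$ and the amount by which $E^{\lambda}$ distorts $Q$. On the Teichm\"uller side I would use the classical cross-ratio (quasi-M\"obius) characterization of quasisymmetry: an orientation preserving homeomorphism $h$ of $S^{1}$ is quasisymmetric if and only if there is a constant $C\geq 1$ with $C^{-1}\leq L(h(Q))\leq C$ for every box $Q$ with $L(Q)=\log 2$ --- equivalently, $|\log cr(h(a),h(b),h(c),h(d))|$ stays bounded above and bounded away from $0$ over all quadruples with $|\log cr(a,b,c,d)|=\log 2$. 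On the lamination side, Lemma~\ref{lem:Thurston_norm_and_Holder_distribution} already says that $\|\lambda\|_{Th}<\infty$ if and only if $\lambda(Q)$ is bounded over boxes with $L(Q)=\log 2$. So in both directions the task is to relate $\lambda(Q)$ to $L(E^{\lambda}(Q))=|\log cr(h(a),h(b),h(c),h(d))|$, where $h=E^{\lambda}|_{S^{1}}$ and $Q=[a,b]\times[c,d]$.

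The bridge is the comparison isometry. Given $Q$ with $L(Q)=\log 2$, let $I$ be the common perpendicular segment of the two sides $\geodesic{a}{d}$ and $\geodesic{b}{c}$; as in the proof of Lemma~\ref{lem:Thurston_norm_and_Holder_distribution}, $I$ has hyperbolic length $\log 2$ and is crossed by every leaf of $|\lambda|$ inside $Q$. By the construction of the earthquake measure recalled in \S 3, the comparison isometry between the two strata at the endpoints of $I$ is a hyperbolic translation whose translation length equals, up to the mass of at most two leaves, the $\lambda$-mass of $I$, hence is comparable to $\lambda(Q)$, and whose axis weakly separates these strata, hence meets the bounded ``core'' of $Q$. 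The decisive point is Thurston's left-translation property: the comparison isometries across $I$ all translate to the left, so \emph{there is no cancellation} and the single composite isometry faithfully records the total mass. Writing $h(a),h(b),h(c),h(d)$ in terms of the M\"obius pieces of $E^{\lambda}$ and using M\"obius invariance of the cross-ratio, one sees that, after normalization, $cr(h(a),h(b),h(c),h(d))$ becomes a cross-ratio each of whose entries is obtained from $a,b,c,d$ by applying a coherent composition of left-translations --- with axes meeting the core of $Q$ and total translation lengths controlled by, and controlling, the $\lambda$-mass of $Q$, of its complementary box, and of short arcs near the four endpoints (these being exactly the leaves that separate the quadruple).

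Now $(1)\Rightarrow(2)$: if $\|\lambda\|_{Th}=K<\infty$, every such composite has translation length $O(K)$ with axis in the bounded core of $Q$; an elementary hyperbolic-geometry estimate then confines $L(E^{\lambda}(Q))=|\log cr(h(a),h(b),h(c),h(d))|$ to a compact subinterval of $(0,\infty)$ depending only on $K$, uniformly over boxes with $L(Q)=\log 2$. Hence $h=E^{\lambda}|_{S^{1}}$ has uniformly two-sided bounded cross-ratio distortion, i.e.\ is quasisymmetric. Conversely, $(2)\Rightarrow(1)$: if $h$ is quasisymmetric with constant $M$, then $L(h(Q))\leq C(M)$ for all boxes with $L(Q)=\log 2$; but if $\lambda(Q)$ were large for some such $Q$, the composite comparison isometry $g$ recording the leaves of $|\lambda|$ inside $Q$ would have large translation length, would push $b$ and $c$ toward a common point, and $cr(a,g(b),g(c),d)$ --- hence $L(h(Q))$ --- would blow up, contradicting the bound $C(M)$. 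Thus $\lambda(Q)$ is bounded over boxes with $L(Q)=\log 2$, and Lemma~\ref{lem:Thurston_norm_and_Holder_distribution} gives $\|\lambda\|_{Th}<\infty$.

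The main obstacle lies in the two ingredients just used as black boxes. First, that the composite across $I$ is a hyperbolic translation of the stated translation length with axis meeting $I$ is transparent only for \emph{finite} earthquakes; for general bounded $\lambda$ one must approximate $E^{\lambda}$ by finite earthquakes $E^{\lambda_{n}}$ with $E^{\lambda_{n}}|_{S^{1}}\to E^{\lambda}|_{S^{1}}$ while keeping $\|\lambda_{n}\|_{Th}$ controlled by $\|\lambda\|_{Th}$, building on Thurston's approximation of earthquakes by finite ones \cite{Thurston} and the Gardiner--Hu--Lakic finite earthquake theorem \cite{GHL}; a careless discretization concentrates mass and blows up the Thurston norm, so this step is genuinely delicate. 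Second, the two-sided cross-ratio/Liouville estimate for a coherent family of left-translations near a $\log 2$-box has to be carried out with attention to which of the several ways a leaf can separate the quadruple $a,b,c,d$ occurs (the contributions of the different types push $L(E^{\lambda}(Q))$ up, down toward $0$, and sideways) and to the geometric fact that for $L(Q)=\log 2$ every separating leaf passes within bounded distance of the center $\ell_{Q}$; this is elementary but the bookkeeping is the real work. The remaining steps --- passing to the limit, semicontinuity of $\|\lambda\|_{Th}$ and of the cross-ratio distortion --- are routine.
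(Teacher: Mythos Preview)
The paper does not give its own proof of this theorem: it is stated as a known result and attributed to \cite{GHL} and \cite{Saric2}. The only in-paper argument bearing on the equivalence is Lemma~\ref{lem:comparizon}, which establishes the quantitative form of $(1)\Rightarrow(2)$ by a route entirely different from yours: the earthquake path $t\mapsto E^{t\lambda}|_{S^1}$ is extended to a holomorphic motion of $S^1$ over a strip whose width depends only on $\|\lambda\|_{Th}$ (this is \cite{Saric4}, recalled as Theorem~\ref{thm:holomorphic_motion_saric}), and Slodkowski's theorem then bounds the dilatation of the quasiconformal extension at $\tau=1$, hence the quasisymmetric constant of $E^{\lambda}|_{S^1}$. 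The paper gives no argument for $(2)\Rightarrow(1)$ beyond the citation.

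Your approach --- comparing $\lambda(Q)$ directly to $L(E^{\lambda}(Q))$ via comparison isometries across a $\log 2$-box --- is in the spirit of the original \cite{GHL} and \cite{Hu1} proofs and is a legitimate alternative to the holomorphic-motion method. Two points in your sketch need tightening. First, you write that the composite comparison isometry across $I$ has translation length equal (up to two atoms) to $\lambda(I)$; but by the definition recalled in \S3.1, $\lambda(I)$ is the limit of \emph{sums} of translation lengths of \emph{consecutive} comparison isometries $\mathrm{cmp}(A_i,A_{i+1})$, whereas the single composite $\mathrm{cmp}(A_0,A_n)$ is a composition of translations with varying axes and its translation length need not equal that sum. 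What rescues this in a $\log 2$-box is that all axes lie within bounded distance of $\ell_Q$, so the composite is comparable to a translation along $\ell_Q$ by the total mass --- this is precisely the ``elementary but real bookkeeping'' you flag. Second, in your $(2)\Rightarrow(1)$ sketch the reduction to ``$cr(a,g(b),g(c),d)$ blows up'' is too quick: even after normalizing $E^{\lambda}$ on the stratum of $a$, the images $h(b),h(c),h(d)$ involve \emph{all} leaves separating $a$ from $b$, $c$, $d$ respectively, not only those in $Q$, and $h(d)\neq d$ in general. The no-cancellation from the left-translation property must be applied to this full decomposition. These are genuine details rather than fatal gaps, and carrying them out is exactly what the cited references do.
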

The \emph{earthquake measure map}
$$
\mathcal{EM}:T(\mathbb{D})\to
\mathcal{ML}_b(\mathbb{D})
$$
is defined by $\mathcal{EM}([h])=\lambda$ where
$h=E^\lambda|_{S^1}$. As noted in \S\ref{subsec:remark1}, every
earthquake is determined by its earthquake measure up to
post-composition by M\"obius maps. Hence, together with the
uniqueness of the earthquake measures for homeomorphisms
\cite{Thurston}, Theorem \ref{thm:earthquake_measure_map} tells us
that the earthquake measure map $\mathcal{EM}$ is well-defined and
bijective.

In \cite{GHL} and \cite{Hu1}, it is proved that for a quasisymmetric
map $h$, the Thurston norm of the earthquake measure of $h$ is
comparable with the quasisymmetric constant of $h$. We will give a
brief proof of a weaker result than the comparison statement which
we need here (cf. Lemma \ref{lem:comparizon}).

\section{An example}
\label{sec:example_earthquake} In this section, we consider the
example from Introduction of non-convergence of a sequence in the
space of bounded measured laminations in the Fr\'echet topology
which converges in the weak* topology.

\subsection{Fr\'echet topology vs weak* topology}
\label{subsec:Frechet_weak-star} For the simplicity, we use the
upper half-plane model $\mathbb{H}$ for the hyperbolic plane in
place of $\mathbb{D}$. Let $\ell_n=\geodesic{1/n}{\infty}$
($n\in\mathbb{Z}\setminus \{0\})$ and
$\ell_\infty=\geodesic{0}{\infty}$ in $\mathcal{G}$.

\medskip
\noindent {\bf Example 1.} \quad Let $\lambda_n$ be the measured
lamination whose support is $\ell_n$ with $\lambda_n(\ell_n)=1$. Let
$\lambda_{\infty}$ be the measured lamination whose support is
$\ell_{\infty}$ such that $\lambda_{\infty}(\ell_{\infty})=1$. Then
$\lambda_n$ does \emph{not} converge to $\lambda_\infty$ in the
Fr\'echet topology as $n\to \infty$, while it does converge in the
weak* topology on measures on $\mathcal{G}$.

Indeed, for $n\ge 1$ and $\omega_0=(1+2\sqrt{2})^2$, we define a box
$Q_n=[-a_n,a_n]\times [\omega_0 a_n,-\omega_0 a_n]$ with
$1/(\omega_0 n)<a_n<1/n$, where $[\omega_0 a_n,-\omega_0 a_n]$ is
the interval in $\partial \mathbb{H}=\mathbb{R}\cup \{\infty\}$
which contains $\infty$ and connects $\omega_0 a_n$ and $-\omega_0
a_n$ (cf. Figure \ref{fig:atoms}).

\begin{figure}
\includegraphics[height=5cm]{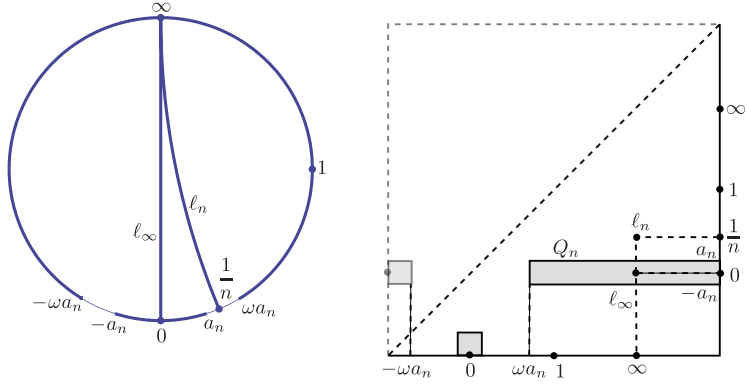}
\caption{$\ell_\infty$, $\ell_n$, and the box $Q_n$ with center
$\ell_\infty$ and $L(Q_n)=\log 2$ such that $\ell_n\notin Q_n$. The
right picture represents how $Q_n$ distributes in the space
$\mathcal{G}$.} \label{fig:atoms}
\end{figure}

Then, one can check that $L(Q_n)=\log 2$, $\lambda_\infty(Q_n)=1$
and $\lambda_n(Q_n)=0$ since $\ell_n\not\in Q_n$. We take a
Lipschitz function on $\mathcal{G}$ with support in $Q^*$ such that
$\|\varphi\|_1\le 1$ and the value at the center $\ell_{Q^*}$ of
$\varphi$ is positive. Set
$\varphi_{\nu,n}=(2/\pi)^{1-\nu}\varphi\circ (\gamma_{Q_n})^{-1}$
for $0<\nu\le 1$. From the symmetries of $Q_n$ and $Q^*$, one can
see that $\gamma_{Q_n}(\ell_\infty)=\ell_{Q_n}$ for all $n$. Thus,
by \eqref{eq:Holder_comparizon}, the pair $(\varphi_{\nu,n},Q_n)$ is
in ${\rm test}(\nu)$ and satisfies
\begin{equation} \label{eq:comparizon_2}
\|\lambda_n-\lambda_\infty\|_\nu\ge
\left|
\int_{Q_n}\varphi_{\nu,n} d(\lambda_n-\lambda_\infty)
\right|=(2/\pi)^{1-\nu}\varphi_{\nu,n}(\ell_\infty)\ge (2/\pi)\varphi (\ell_{Q^*})
\end{equation}
for all $n$ and $0<\nu\le 1$, which implies what we wanted. By the
same reason, we can see that the ``midpoint approximation``
$\frac{1}{2}(\lambda_n+\lambda_{-n})$ does not converge to
$\lambda_\infty$ in the  Fr\'echet topology either. We generalize
this example in the following proposition.

\begin{proposition} \label{prop:endpoints_single}
Let $\{\lambda_n\}_{n=1}^\infty$ be a sequence of bounded measured
laminations which converges in the Fr\'echet topology to a measured
lamination $\lambda_\infty$ whose support is a  single geodesic.
Then, for all sufficiently large $n$, each endpoint of
$|\lambda_\infty|$ is contained in the closure the set of endpoints
of leaves of $\lambda_n$.
\end{proposition}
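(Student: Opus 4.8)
The plan is to argue by contrapositive: suppose some endpoint $p$ of $|\lambda_\infty|$ is \emph{not} in the closure of the set of endpoints of leaves of $\lambda_n$ for infinitely many $n$. We will produce a box $Q$ of geodesics with $L(Q)=\log 2$ that contains the single leaf $\ell_\infty=|\lambda_\infty|$ but which is disjoint from the support $|\lambda_n|$ for all those $n$, and then use the special test function machinery to show $\|\lambda_n-\lambda_\infty\|_\nu$ stays bounded below, contradicting Fr\'echet convergence. This is exactly the mechanism of Example 1, now carried out in a coordinate-free way.

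First I would normalize via a M\"obius map: after conjugating by a suitable $\gamma\in{\rm M\ddot{o}b}(\mathbb{D})$ (which acts isometrically on $\mathcal{ML}_b(\mathbb{D})$ and preserves $L$, hence changes none of the hypotheses or conclusions), we may pass to the upper half-plane model and assume $\ell_\infty=\geodesic{0}{\infty}$ and that the distinguished endpoint is $p=0$. By the negation of the conclusion, there is $\epsilon>0$ and an infinite set $N\subset\mathbb{N}$ such that for $n\in N$ no leaf of $\lambda_n$ has an endpoint in the arc $(-\epsilon,\epsilon)$ of $\partial\mathbb{H}$. Now, for a parameter $a>0$ let $Q_a=[-a,a]\times[\omega_0 a,-\omega_0 a]$ (the second arc being the one through $\infty$), where $\omega_0=(1+2\sqrt2)^2$; as recorded in \S\ref{subsec:Box_Liouville_measure}, $L(Q_a)=\log 2$ and, by the symmetry computation used for Figure \ref{fig:atoms}, $\gamma_{Q_a}(\ell_\infty)=\ell_{Q_a}$, so $\ell_\infty\in Q_a$. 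Choose $a$ small enough that $\omega_0 a<\epsilon$; then every leaf of $\lambda_n$ ($n\in N$) has both endpoints outside $[-a,a]\cup[\omega_0 a,-\omega_0 a]$ along the arc $(-\epsilon,\epsilon)$, and a short check of the cross-ratio/interval-nesting conditions shows no such geodesic can lie in $Q_a$; hence $\lambda_n(Q_a)=0$ for $n\in N$, while $\lambda_\infty(Q_a)=\lambda_\infty(\ell_\infty)>0$.

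The remaining step is purely formal: take the fixed Lipschitz bump $\varphi$ supported in $Q^*$ with $\|\varphi\|_1\le 1$ and $\varphi(\ell_{Q^*})>0$ as in Example 1, and set $\varphi_{\nu}=(2/\pi)^{1-\nu}\varphi\circ(\gamma_{Q_a})^{-1}$, so that $(\varphi_\nu,Q_a)\in{\rm test}(\nu)$ by \eqref{eq:Holder_comparizon}. Then for every $n\in N$ and every $\nu$,
$$
\|\lambda_n-\lambda_\infty\|_\nu\ge\Big|\int_{Q_a}\varphi_\nu\,d(\lambda_n-\lambda_\infty)\Big|=(2/\pi)^{1-\nu}\varphi(\ell_{Q^*})\cdot\lambda_\infty(\ell_\infty)\ge(2/\pi)\varphi(\ell_{Q^*})\lambda_\infty(\ell_\infty)>0,
$$
so $\lambda_n\nrightarrow\lambda_\infty$ in the Fr\'echet topology, a contradiction. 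Applying the same argument to the other endpoint $\infty$ of $\ell_\infty$ finishes the proof.

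The main obstacle is the geometric claim in the middle paragraph: that a box with $L(Q)=\log 2$ containing the geodesic $\geodesic{0}{\infty}$ and of the explicit symmetric form $Q_a$ excludes \emph{every} geodesic whose endpoints avoid a fixed neighborhood of $0$. One must be slightly careful because a leaf of $\lambda_n$ could in principle have one endpoint near $\infty$; but the support $|\lambda_n|$ is a lamination, so such a leaf cannot cross $\ell_\infty$, forcing both its endpoints onto the same side, and the interval-containment definition of a box then rules it out once $a$ is small. Making this disjointness argument fully rigorous — essentially reproducing, in a coordinate-free form, the picture of Figure \ref{fig:atoms} — is the only non-routine point; everything else is a direct transcription of the estimate \eqref{eq:comparizon_2}.
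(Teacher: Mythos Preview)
Your proposal is essentially the paper's own proof, carried out in the same upper half-plane normalization with the same family of symmetric boxes $Q_a=[-a,a]\times[\omega_0 a,-\omega_0 a]$ and the same test-function estimate \eqref{eq:comparizon_2}. The core argument is correct.

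However, your self-identified ``main obstacle'' is illusory, and the reasoning you offer for it is wrong. The disjointness $Q_a\cap|\lambda_n|=\emptyset$ is immediate from the definition of a box: any geodesic in $Q_a$ has one endpoint in the first factor $[-a,a]\subset(-\epsilon,\epsilon)$, and by hypothesis no leaf of $\lambda_n$ has an endpoint there. No case analysis, no lamination structure, and no smallness of $a$ beyond $a<\epsilon$ is needed. Your claim that ``$|\lambda_n|$ is a lamination, so such a leaf cannot cross $\ell_\infty$'' is false: $\ell_\infty$ is the support of $\lambda_\infty$, not a leaf of $|\lambda_n|$, and leaves of $\lambda_n$ may certainly cross it. Delete the obstacle paragraph; the proof is then clean and matches the paper exactly.
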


\begin{proof}
Let $|\lambda_\infty|=\geodesic{0}{\infty}$. Suppose on the contrary
that there is a $\delta_n>0$ such that any leaf of $\lambda_n$ does
not have endpoints in an open interval $(-\delta_n,\delta_n)$. We
take a sufficiently small $a_n>0$ such that $\omega_0 a_n<\delta_n$,
where $\omega_0=(1+\sqrt{2})^2$ as before. Define $Q_n$ by
$$
Q_n=[-a_n,a_n]\times [\omega_0a_n,-\omega_0a_n]
$$
Then, the center of $Q_n$ is $\ell_\infty$, $L(Q_n)=\log 2$ and
$Q_n\cap |\lambda_n|=\emptyset$. Thus, by the same calculation as
\eqref{eq:comparizon_2}, we get
\begin{equation*} \label{eq:finite_single_1}
\|\lambda_n-\lambda_\infty\|_\nu\ge (2/\pi)\varphi(\ell_\infty)
\end{equation*}
for some Lipschitz function $\varphi$ independent of $\nu$.
This means that $\{\lambda_n\}_{n=1}^\infty$
can not converge to $\lambda_\infty$ in the Fr\'echet topology.
\end{proof}

Unfortunately, Proposition \ref{prop:endpoints_single} does not give
a characterization of bounded measured laminations in a neighborhood
of an elementary measured lamination which is illustrated by Example
1.

\subsection{Elementary Earthquakes}
We shall check the behavior of earthquakes whose supports are single
geodesics given in the above section to clarify the connection
between the Fr\'echet topology and the weak* topology on the
measured laminations and the Teichm\"uller topology on the
extensions to $S^1$ of their corresponding earthquake maps.

Let $\ell_n=\geodesic{1/n}{\infty}$ for $n\in \mathbb{N}\cup
\{\infty\}$. Then the earthquake map $E^{\lambda_n}$ for elementary
measures $\lambda_n$ with single geodesic support $\ell_n$ and mass
$1$ (normalized to fix three points $\{-1,0,\infty\}$) is
$$
E^{\lambda_n}(z)=\left\{
\begin{array}{cc}
e (z-1/n)+1/n & \mbox{(${\rm Re}(z)>1/n$)} \\
z & \mbox{(${\rm Re}(z)\le 1/n$)}
\end{array}
\right.
$$
for $z\in \mathbb{H}$, where we set $1/\infty=0$. Clearly
$h_n:=E^{\lambda_n}\mid_{\partial \mathbb{H}}$ converges to
$h_\infty=E^{\lambda_\infty}\mid_{\partial \mathbb{H}}$ pointwise.
However, $h_n$ does not converge to $h_\infty$ in the Teichm\"uller
topology. Indeed, for $n\in \mathbb{N}$ and boxes
$Q_n=[\infty,-e/n]\times [0,e/n]$, we get $L(Q_n)=\log 2$ and
$$
L(h_n\circ h_\infty^{-1}(Q_n))
=\log (e+1)-1.
$$
This means that the maximal dilatation of any quasiconformal
extension of $h_n\circ h_\infty^{-1}$ is uniformly greater than $1$.
Thus, a sequence $\{h_n\}_{n=1}^\infty$ does not converge to
$h_\infty$ in $T(\mathbb{H})$, which also follows from Theorem
\ref{thm:EMisHomeo} and Example $1$ above.

\section{The earthquake measure map is a homeomorphism}
In this section, we prove Theorem \ref{thm:EMisHomeo}. To do so, we
define a \emph{uniform-weak* topology} on
$\mathcal{ML}_b(\mathbb{D})$ (see \cite{Saric1}) and show that it is
equivalent to the restriction of the Fr\'echet topology.

\subsection{Uniform-weak* topology}
We say that a sequence $\lambda_m\in\mathcal{ML}_b(\mathbb{D})$
\emph{converges to $\lambda\in \mathcal{ML}_b(\mathbb{D})$ in the
uniform-weak* topology} if for any continuous function $f$ on
$\mathcal{G}$ with ${\rm supp}(f)\subset Q^*$,
$$
\sup_{Q}
\int_{Q^*}f d((\gamma_Q)^*(\lambda_m)-(\gamma_Q)^*(\lambda))
\to 0
$$
as $m\to \infty$, where the supremum is over all boxes $Q$ with
$L(Q)=\log 2$ and $\gamma_Q\in{\rm M\ddot{o}b}(\mathbb{D})$ is such
that $\gamma_Q(Q^{*})=Q$.

\subsection{Two lemmas}
 Let us start with the following lemma.

\begin{lemma} \label{lem:weak-conv_and_Holder}
Let $\{\lambda_m\}_{m\in\mathbb{N}}$ be a sequence of bounded
measured laminations and $\lambda$ a bounded measured lamination.
Suppose that there exists $C>0$ such that $\|\lambda_m\|_{Th}<C$ for
all $m\in\mathbb{N}$. Then, the following are equivalent.
\begin{itemize}
\item[(1)]
The sequence $\{\lambda_m\}_{m}$ converges to $\lambda\in
\mathcal{ML}_b(\mathbb{D})$ in the uniform-weak* topology.
\item[(2)]
The sequence $\{\lambda_m\}_{m}$ converges to $\lambda\in
\mathcal{ML}_b(\mathbb{D})$ in the Fr\'echet topology.
\end{itemize}
\end{lemma}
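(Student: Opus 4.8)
The plan is to reformulate both convergence notions in terms of the family of pulled-back signed measures
$$
\mu_Q^{(m)}:=(\gamma_Q)^*\lambda_m-(\gamma_Q)^*\lambda
$$
on the fixed compact box $Q^*\subset\mathcal{G}$, where $Q$ ranges over all boxes with $L(Q)=\log 2$, and then to pass between a ``one test function at a time'' statement and a ``uniformly over the unit ball of test functions'' statement by an Arzel\`a--Ascoli argument. First I would record two bookkeeping facts. (i) For any Borel measure $\mu$ and any bounded Borel $\psi$ supported in $Q^*$ one has the change-of-variables identity $\int_{Q^*}\psi\, d((\gamma_Q)^*\mu)=\int_{Q}(\psi\circ\gamma_Q^{-1})\, d\mu$; consequently, writing $\psi=\varphi\circ\gamma_Q$ as $(\varphi,Q)$ runs over $\mathrm{test}(\nu)$ (so that $\mathrm{supp}\,\psi\subset Q^*$ and $\|\psi\|_\nu\le1$),
$$
\|\lambda_m-\lambda\|_\nu=\sup_{Q}\ \sup_{\substack{\mathrm{supp}\,\psi\subset Q^*\\ \|\psi\|_\nu\le1}}\Big|\int_{Q^*}\psi\, d\mu_Q^{(m)}\Big|,
$$
the outer supremum over all $Q$ with $L(Q)=\log2$. (ii) By M\"obius invariance of $\|\cdot\|_{Th}$, the hypothesis $\|\lambda_m\|_{Th}<C$, and the upper bound in Lemma~\ref{lem:Thurston_norm_and_Holder_distribution} (recall $L(Q^*)=\log2$), the total variations satisfy $|\mu_Q^{(m)}|(Q^*)\le C_2$ for a constant $C_2$ independent of $Q$ and $m$.

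For $(2)\Rightarrow(1)$: fix a continuous $f$ with $\mathrm{supp}\,f\subset Q^*$ and $\varepsilon>0$, and approximate $f$ uniformly by a Lipschitz function $g$ with $\mathrm{supp}\,g\subset Q^*$ and $\|f-g\|_\infty<\varepsilon$ (this is possible since $f$ vanishes on $\partial Q^*$: cut off near $\partial Q^*$ and mollify). Then $(\|g\|_1^{-1}\,g\circ\gamma_Q^{-1},Q)\in\mathrm{test}(1)$ for every such $Q$, so by (i) and the definition of $\|\cdot\|_1$,
$$
\sup_Q\Big|\int_{Q^*}g\, d\mu_Q^{(m)}\Big|\le\|g\|_1\,\|\lambda_m-\lambda\|_1\longrightarrow0\qquad(m\to\infty),
$$
while (ii) gives $\big|\int_{Q^*}(f-g)\, d\mu_Q^{(m)}\big|\le C_2\varepsilon$ uniformly in $Q$. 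Hence $\limsup_m\sup_Q\big|\int_{Q^*}f\, d\mu_Q^{(m)}\big|\le C_2\varepsilon$, and letting $\varepsilon\to0$ gives uniform-weak* convergence. This direction uses only the $\nu=1$ Fr\'echet norm, and in fact the uniform Thurston bound is automatic here from Lemmas~\ref{lem:Thurston_norm_and_Holder_distribution} and~\ref{lem:Holder_top_and_Measure_top}.

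For $(1)\Rightarrow(2)$: fix $\nu\in(0,1]$ and $\varepsilon>0$. The set $K_\nu:=\{\psi:\mathrm{supp}\,\psi\subset Q^*,\ \|\psi\|_\nu\le1\}$ is uniformly bounded and equicontinuous on the compact space $Q^*$, hence totally bounded in the sup norm by Arzel\`a--Ascoli; choose a finite $\varepsilon$-net $f_1,\dots,f_N\in K_\nu$. Given $\psi\in K_\nu$, pick $f_j$ with $\|\psi-f_j\|_\infty<\varepsilon$; then for every $Q$ and $m$, using (ii),
$$
\Big|\int_{Q^*}\psi\, d\mu_Q^{(m)}\Big|\le\Big|\int_{Q^*}f_j\, d\mu_Q^{(m)}\Big|+C_2\varepsilon\le\max_{1\le k\le N}\sup_Q\Big|\int_{Q^*}f_k\, d\mu_Q^{(m)}\Big|+C_2\varepsilon.
$$
Taking the supremum over $\psi\in K_\nu$ and over $Q$ and invoking the formula in (i), $\|\lambda_m-\lambda\|_\nu\le\max_k\sup_Q\big|\int_{Q^*}f_k\, d\mu_Q^{(m)}\big|+C_2\varepsilon$. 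Since there are only finitely many $f_k$, hypothesis~(1) forces the first term to $0$ as $m\to\infty$, so $\limsup_m\|\lambda_m-\lambda\|_\nu\le C_2\varepsilon$; as $\varepsilon$ and $\nu$ are arbitrary, this is Fr\'echet convergence.

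The only genuinely non-formal ingredient is the Arzel\`a--Ascoli step in $(1)\Rightarrow(2)$: the unit ball of the $\nu$-H\"older space, restricted to functions supported in the fixed box $Q^*$, is precompact in $C^0(Q^*)$, which is exactly what converts ``convergence against each fixed test function, uniformly in $Q$'' into ``convergence uniformly over the whole unit ball, uniformly in $Q$'', and the uniform mass bound~(ii) is what makes the finite-net estimate close. I expect the remaining care to be purely bookkeeping: verifying that a general continuous $f$ supported in $Q^*$ is a sup-norm limit of Lipschitz functions still supported in $Q^*$, and that every constant above is independent of $Q$ --- which is precisely where M\"obius invariance of the Thurston norm and the normalization $L(Q)=\log2$ enter.
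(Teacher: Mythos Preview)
Your proof is correct and follows essentially the same approach as the paper's: both directions hinge on the uniform total-mass bound $|\mu_Q^{(m)}|(Q^*)\le C_2$ coming from the Thurston-norm hypothesis, Arzel\`a--Ascoli compactness of the $\nu$-H\"older unit ball in $C^0(Q^*)$ for $(1)\Rightarrow(2)$, and sup-norm approximation of a continuous $f$ by a H\"older (you use Lipschitz) test function for $(2)\Rightarrow(1)$. The only difference is organizational---the paper argues both implications by contradiction, extracting a single $C^0$-limit $\psi_\infty$ via Arzel\`a--Ascoli in $(1)\Rightarrow(2)$, whereas you argue directly using a finite $\varepsilon$-net; these are equivalent uses of the same compactness, and your direct version is arguably cleaner.
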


\begin{proof}
Assume that (1) holds. Seeking a contradiction, we suppose (2) does
not hold. Then, by taking a subsequence of $\{\lambda_m\}_m$ if
necessary, there are $\nu$, $\epsilon_0>0$ and a sequence
$\{(\varphi_m,Q_m)\}_{m=1}^\infty$ in ${\rm test}(\nu)$ such that
\begin{equation} \label{eq:from1to2_1}
\left|
\int_{Q^*}\varphi_m\circ \gamma_{Q_m} d\hat{\lambda}_m
\right|
=
\left|\int_{Q_m}\varphi_m d(\lambda_m-\lambda)
\right|
\ge \epsilon_0.
\end{equation}
for all $m$, where we set
$\hat{\lambda}_m=(\gamma_{Q_m}^*\lambda_m)-(\gamma_{Q_m}^*\lambda)$
for the simplicity. From the definition of a test function,
$\varphi_m\circ \gamma_{Q_m}$ satisfies $\|\varphi_m\circ
\gamma_{Q_m}\|_\nu\le 1$. Hence, by Ascoli-Arzela's theorem, the
sequence contains a convergence subsequence $\{\varphi_{m_j}\circ
\gamma_{Q_{m_j}}\}_j$ in the $C^0$-topology. We denote by
$\psi_\infty$ its limit.

Since the support of $\varphi_{m_j}\circ \gamma_{Q_{m_j}}$ is contained in $Q^*$,
so is that of $\psi_\infty$.
Notice that
\begin{equation} \label{eq:from1to2_2}
\int_{Q^*}\psi_\infty d\hat{\lambda}_{m_j}
= \int_{Q^*}\varphi_{m_j}\circ \gamma_{Q_{m_j}}
 d\hat{\lambda}_{m_j}+
 \int_{Q^*}(\psi_\infty-\varphi_{m_j}\circ \gamma_{Q_{m_j}}) d\hat{\lambda}_{m_j}.
\end{equation}
Since the Thurston norm of $\lambda_{m_j}$ is uniformly bounded, it
follows that the last term of the right-hand side of
\eqref{eq:from1to2_2} tends to zero. From \eqref{eq:from1to2_1}, we
get
$$
\sup_{Q,L(Q)=\log 2}
\left|\int_{Q^*}\psi_\infty d
((\gamma_{Q}^*\lambda_{m_j})-(\gamma_{Q}^*\lambda))
\right|
\ge
\left|
\int_{Q^*}\psi_\infty d\hat{\lambda}_{m_j}
\right|\ge \epsilon_0/2
$$
for sufficiently large $j$, which contradicts (1). Thus (1) implies
(2).

We now assume that (2) holds, and that (1) does not hold and seek a
contradiction again. Then, after taking a subsequence of
$\{\lambda_m\}_{m=1}^\infty$ if necessary, there exist
$\epsilon_0>0$ and a continuous function $f$ on $\mathcal{G}$ with
${\rm supp}(f)\subset Q^*$ such that
$$
\sup_{Q}
\left|
\int_{Q^*}f d((\gamma_Q)^*(\lambda_m)-(\gamma_Q)^*(\lambda))
\right|
\ge 2\epsilon_0
$$
for all $m$,
where the supremum is taken over all squares $Q$ with $L(Q)=\log 2$.
This implies that there is a sequence $\{Q_m\}_{m=1}^\infty$ of boxes
such that $L(Q_m)=\log 2$ and
\begin{equation} \label{eq:from2to1_1}
\left|
\int_{Q^*}f d\hat{\lambda}_m
\right|
\ge \epsilon_0
\end{equation}
for $m\ge 1$,
where we set $\hat{\lambda}_m=(\gamma_{Q_m})^*(\lambda_m)-(\gamma_{Q_m})^*(\lambda)$.

Let $\epsilon>0$. Take a $\nu$-H\"older function $\varphi_\epsilon$
with ${\rm supp}(\varphi_\epsilon)\subset Q^*$ such that the
supremum norm of $f-\varphi_\epsilon$ is less than $\epsilon$. Let
$\psi_m=(\|\varphi_\epsilon\|_\nu)^{-1}(\varphi_\epsilon\circ
\gamma_{Q_m}^{-1})$. Then, a pair $(\psi_m,Q_m)$ is in ${\rm
test}(\nu)$ and it satisfies
\begin{align}
\int_{Q_m}\psi_m\ d(\lambda_m-\lambda)
&=
\frac{1}{\|\varphi_\epsilon\|_\nu}\int_{Q^*}\varphi_\epsilon d
\hat{\lambda}_m \nonumber\\
&=
\frac{1}{\|\varphi_\epsilon\|_\nu}
\left(\int_{Q^*}f d\hat{\lambda}_m+
\int_{Q^*}(\varphi_\epsilon -f)d\hat{\lambda}_m\right).
\label{eq:from2to1_2}
\end{align}
By Lemma \ref{lem:Thurston_norm_and_Holder_distribution} and by our
assumption that Thurston norms of $\lambda_m$ are uniformly bounded,
the last term in the parentheses of \eqref{eq:from2to1_2} is less
than $C_1\epsilon$ for some $C_1>0$ independent of $m$ and
$\epsilon$ (and hence $\nu$). By \eqref{eq:from2to1_1}, we get
\begin{equation*}\label{eq:from2to1_3}
\left|
\int_{Q_m}\psi_m\ d(\lambda_m-\lambda)
\right|
\ge
\frac{1}{\|\varphi_\epsilon\|_\nu}(\epsilon_0-C_1\epsilon).
\end{equation*}
Hence,
if we take $\epsilon>0$ (and $\nu>0$)
so that $C_1\epsilon<\epsilon_0/2$,
we obtain
$$
\sup_{(\varphi,Q)\in {\rm test}(\nu)}
\left|
\int_Q\varphi d(\lambda_m-\lambda)
\right|
\ge
\left|
\int_{Q_m}\psi_m\ d(\lambda_m-\lambda)
\right|
\ge
\frac{\epsilon_0}{2\|\varphi_\epsilon\|_\nu}
$$
for all $m$, which contradicts (2), since the constant on the
right-hand side is independent of $m$. Thus (2) implies (1).
\end{proof}

We need the following lemma.

\begin{lemma} \label{lem:comparizon}
For any $C_1>0$, there is $C_2>0$ depending only of $C_1$ such that
for any bounded measured lamination $\lambda$ with
$\|\lambda\|_\nu\le C_1$ for some $\nu$ with $0<\nu\le 1$, the
quasisymmetric constant of $E^\lambda\mid_{S^1}$ is at most $C_2$.
\end{lemma}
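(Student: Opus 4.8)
The plan is to prove the contrapositive-style statement by a compactness/normalization argument reducing everything to a fixed box, combined with the known fact that Thurston-bounded earthquake measures induce quasisymmetric maps. First I would use Lemma \ref{lem:Holder_top_and_Measure_top} (or directly Lemma \ref{lem:Thurston_norm_and_Holder_distribution}): the hypothesis $\|\lambda\|_\nu\le C_1$ forces $\sup_Q\lambda(Q)\le C_1\|\lambda\|_\nu\le C_1^2$ over all boxes $Q$ with $L(Q)=\log 2$, and then $\|\lambda\|_{Th}\le C_0\sup_Q\lambda(Q)\le C_0C_1^2=:C_1'$, a bound depending only on $C_1$ (and crucially \emph{not} on $\nu$). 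So it suffices to produce $C_2$ depending only on a Thurston-norm bound $C_1'$ such that the quasisymmetric constant of $E^\lambda|_{S^1}$ is at most $C_2$. This is essentially the quantitative content of Theorem \ref{thm:earthquake_measure_map}; the point here is merely to extract an \emph{effective} bound, and the lemma statement only asks for a weaker, non-comparison version of the estimate in \cite{GHL}, \cite{Hu1}, so I do not need the sharp comparison.

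Next I would argue by contradiction: suppose no such $C_2$ exists for a given $C_1'$. Then there is a sequence $\lambda_m\in\mathcal{ML}_b(\mathbb{D})$ with $\|\lambda_m\|_{Th}\le C_1'$ for all $m$, but the quasisymmetric constant $M_m$ of $h_m:=E^{\lambda_m}|_{S^1}$ tends to $\infty$. Using the failure of quasisymmetry, pick adjacent arcs $J_1^m,J_2^m$ of equal length realizing (up to a factor) the distortion $M_m$; by post-composing $E^{\lambda_m}$ with a M\"obius map (which does not change $\lambda_m$ or its Thurston norm, by \S\ref{subsec:remark1} and the invariance $\|\gamma^*\lambda\|_{Th}=\|\lambda\|_{Th}$) and pre-composing with a M\"obius map of the source (which replaces $\lambda_m$ by $\gamma_m^*\lambda_m$, again with the same Thurston norm), I can normalize so that $J_1^m\cup J_2^m$ and its image both sit in a fixed configuration — e.g. a fixed box $Q^*$ of geodesics, in the spirit of the uniform-weak* setup. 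After this normalization the measures $\mu_m:=\gamma_m^*\lambda_m$ restricted to a fixed compact neighborhood of $Q^*$ have uniformly bounded mass (Lemma \ref{lem:Thurston_norm_and_Holder_distribution}), so by weak-$*$ compactness of bounded Radon measures a subsequence converges weak-$*$ to a limit measure $\mu_\infty$, which is again a bounded measured lamination with $\|\mu_\infty\|_{Th}\le C_1'$ (the lamination condition passes to the limit since leaves cannot cross, and Thurston-boundedness passes via Lemma \ref{lem:Thurston_norm_and_Holder_distribution}).

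Finally I would invoke continuity of the earthquake construction under weak-$*$ convergence on \emph{uniformly Thurston-bounded} sequences: by Thurston's approximation of earthquake graphs \cite{Thurston} together with the uniform Thurston bound, $E^{\mu_m}|_{S^1}\to E^{\mu_\infty}|_{S^1}$ pointwise on $S^1$ (this is the standard local-finiteness/compactness argument for earthquakes — the translation lengths of comparison isometries across a fixed transversal are controlled by $\mu_m$ of a fixed box, which converges). Since $E^{\mu_\infty}|_{S^1}$ is quasisymmetric by Theorem \ref{thm:earthquake_measure_map} (as $\mu_\infty$ is bounded), it has some finite quasisymmetric constant $M_\infty$; but pointwise convergence of homeomorphisms on the four-point configuration we arranged forces the distortion ratios $|h_m(J_1^m)|/|h_m(J_2^m)|$ to converge to the corresponding ratio for $E^{\mu_\infty}|_{S^1}$, which is at most $M_\infty<\infty$, contradicting $M_m\to\infty$. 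The main obstacle is the middle step — getting a genuinely uniform normalization so that the limit measure is supported in a fixed compact set and the earthquakes converge on the \emph{same} test configuration; this requires care that the M\"obius normalizations chosen to move $J_1^m,J_2^m$ into standard position do not let mass of $\mu_m$ escape to infinity, which is exactly where the Thurston-norm bound (hence Lemma \ref{lem:Thurston_norm_and_Holder_distribution}) is used decisively.
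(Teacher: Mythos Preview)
Your argument is correct but takes a genuinely different route from the paper's. After the common first step of reducing to a uniform Thurston-norm bound via Lemma \ref{lem:Holder_top_and_Measure_top}, the paper invokes the complex-earthquake machinery of \cite{Saric4}: the real earthquake path $t\mapsto E^{t\lambda}|_{S^1}$ extends to a holomorphic motion of $S^1$ over a domain $S_\lambda\supset\mathbb{R}$ whose shape depends only on $\|\lambda\|_{Th}$, and Slodkowski's extension theorem then bounds the Beltrami coefficient at $\tau=1$ in terms of the Poincar\'e distance from $0$ to $1$ in $S_\lambda$, hence purely in terms of $\|\lambda\|_{Th}$. This is direct and in principle effective. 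Your compactness-and-contradiction argument avoids holomorphic motions altogether and relies instead on the pointwise convergence of earthquakes under weak* convergence of uniformly Thurston-bounded measures (this is precisely \cite[Lemma~3.2]{Saric2}, which the paper itself uses later in the proof of Theorem \ref{thm:EMisHomeo}); it is more elementary but non-effective. One small refinement: the post-composition M\"obius map you use to normalize the target need not preserve the arc-length ratio $|h_m(J_1^m)|/|h_m(J_2^m)|$, so it is cleaner to encode the ``bad'' configuration via a cross-ratio (equivalently, the Liouville measure of a box), which is M\"obius invariant on both sides --- this is exactly how the paper phrases the analogous step in the proof of continuity of $\mathcal{EM}^{-1}$, and your remark ``in the spirit of the uniform-weak* setup'' already points that way.
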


\begin{proof}
This follows from the results in \cite{Saric4}. Indeed, Lemma
\ref{lem:Holder_top_and_Measure_top} implies that
$\|\lambda\|_{Th}<\infty$. Then the earthquake path $t\mapsto
E^{t\lambda}|_{S^1}$ is a real analytic path in the universal
Teichm\"uller space $T(\mathbb{D})$ which extends to a holomorphic
motion $\tau\mapsto E^{\tau\lambda}|_{S^1}$ of $S^1$ in
$\hat{\mathbb{C}}$. Moreover, the holomorphic motion is well-defined
for $\tau$ in a neighborhood of the real line $\mathbb{R}$ whose
shape depends only on $\|\lambda\|_{Th}$ (see \cite{Saric4}). Then
the essential supremum norm of the Beltrami coefficient of the
extension of the holomorphic motion of $S^1$ to a holomorphic motion
of $\hat{\mathbb{C}}$ for $\tau =1$ depends only on the shape of the
domain in which $\tau$ is defined. As we noted above, this in turn
only depends on $\|\lambda\|_{Th}$. Thus the quasisymmetric constant
of $E^{\lambda}|_{S^1}$ depends only on $\|\lambda\|_{Th}$ which
proves the lemma. An alternative proof would use results in
\cite{GHL} or in \cite{Hu1}.
\end{proof}

\subsection{Proof of Theorem \ref{thm:EMisHomeo}}
We first show that the earthquake measure map $\mathcal{EM}$ is
continuous. Let $[h]\in T(\mathbb{D})$ and
$\{[h_m]\}_{m=1}^\infty\subset T(\mathbb{D})$ with $[h_m]\to [h]$ as
$m\to \infty$. Let $\lambda_m=\mathcal{EM}([h_m])$ and
$\lambda=\mathcal{EM}([h])$. Then, it follows from Lemma 4.1 of
\cite{Saric1} that for any continuous function $f$ on $\mathcal{G}$
with ${\rm supp}(f)\subset Q^*$,
$$
\sup_{Q}\int_{Q^*}f d((\gamma_Q)^*(\lambda_m)-(\gamma_Q)^*(\lambda))\to 0
$$
as $m\to \infty$, where $Q$ runs over all boxes whose Liouville
measures are $\log 2$. Hence, by Lemma
\ref{lem:weak-conv_and_Holder}, we have
$$
\|\lambda_n-\lambda\|_\nu=
\sup_{(\varphi,Q)\in {\rm test}(\nu)}
\left|\int_Q\varphi d(\lambda_m-\lambda)\right|\to 0
$$
as $m\to \infty$,
for all $\nu$.
This means that $\mathcal{EM}$ is continuous.

Next, we show that the inverse $\mathcal{EM}^{-1}$ is continuous.
Suppose $\lambda_n=\mathcal{EM}([h_m])\to
\lambda=\mathcal{EM}([h])$. Assume on the contrary that
$\mathcal{EM}^{-1}$ is not continuous. Namely, there are
$\epsilon_0>0$ and a sequence $\{Q_m\}_{m=1}^\infty$ of boxes with
$L(Q_m)=\log 2$ such that
\begin{equation} \label{eq:continuity_1}
|L(h_m(Q_m))-L(h(Q_m))|\ge \epsilon_0
\end{equation}
for all $m$, where $h$ and $h_m$ are normalized to fix $1$, $i$ and
$-1$. Take M\"obius transformations $\beta_m$ and $\beta^*_m$ such
that $g_m=\beta_m\circ h_m\circ \gamma_{Q_m}$ and
$g^*_m=\beta^*_m\circ h\circ \gamma_{Q_m}$ fix $1$, $i$ and $-1$. By
\eqref{eq:continuity_1}, we have
\begin{equation} \label{eq:continuity_2}
|L(g_m(Q^*))-L(g^*_m(Q^*))|\ge \epsilon_0
\end{equation}
for all $m$. Since $\lambda_n\to \lambda$ in the  Fr\'echet
topology, it follows that $\|\lambda_n\|_{\nu}$ is uniformly
bounded. Lemma \ref{lem:comparizon} implies that the constants of
quasisymmetry of $g_m$ and $g_m^{*}$ are uniformly bounded. The
compactness of normalized quasisymmetric mappings with uniformly
bounded quasisymmetric constants imply that $g_m$ and $g^*_m$ have
two subsequences which are index by the same set that converge to
quasisymmetric mappings $g$ and $g^*$, respectively. For simplicity
of notation, we rename the subsequences to be $g_m$ and $g_m^{*}$.
By \eqref{eq:continuity_2}, $g$ does not coincide with $g^*$.

We claim

\medskip
\noindent {\bf Claim.} The limits, in the weak* topology, of a pair
of converging subsequences
$\{(\gamma_{Q_{m_j}})^{*}\lambda_{m_j}\}_{j=1}^\infty$ and
$\{(\gamma_{Q_{m_j}})^{*}\lambda\}_{j=1}^\infty$ of
$\{(\gamma_{Q_m})^{*}\lambda_m\}_{m=1}^\infty$ and
$\{(\gamma_{Q_m})^{*}\lambda\}_{m=1}^\infty$ is the same bounded
measured lamination $\lambda'$.

\begin{proof}[Proof of the Claim.]
From the compactness of probability measures under the weak*
topology, one sees that two sequences
$\{(\gamma_{Q_m})^{*}\lambda_m\}_{m=1}^\infty$ and
$\{(\gamma_{Q_m})^{*}\lambda\}_{m=1}^\infty$ contain a pair
$\{(\gamma_{Q_{m_j}})^{*}\lambda_{m_j}\}_{j=1}^\infty$ and
$\{(\gamma_{Q_{m_j}})^{*}\lambda\}_{j=1}^\infty$ of converging
subsequences in the weak* topology. Since $\lambda_m$ converges to
$\lambda$ in the Fr\'echet topology, by Lemma
\ref{lem:weak-conv_and_Holder},
$\{(\gamma_{Q_m})^{*}\lambda_m-(\gamma_{Q_m})^{*}\lambda\}_{m=1}^\infty$
converges to zero measure in the weak* sense. Hence the weak* limits
of the pair of converging subsequences
$\{(\gamma_{Q_{m_j}})^{*}\lambda_{m_j}\}_{j=1}^\infty$ and
$\{(\gamma_{Q_{m_j}})^{*}\lambda\}_{j=1}^\infty$ are same.
\end{proof}

We continue the proof of Theorem \ref{thm:EMisHomeo}. By Lemma 3.2
of \cite{Saric2}, we can choose representatives of earthquakes
$E^{(\gamma_{Q_m})^{*}\lambda_m}$ and
$E^{(\gamma_{Q_m})^{*}\lambda}$ such that the two sequences
$\{E^{(\gamma_{Q_m})^{*}\lambda_m}|_{S^1}\}_{m=1}^\infty$ and
$\{E^{(\gamma_{Q_m})^{*}\lambda}|_{S^1}\}_{m=1}^\infty$ converge to
the same (representative of) earthquake map $E^{\lambda'}|_{S^1}$
pointwise on $S^1$ (cf. \S\ref{subsec:remark1}). Then we take
M\"obius transformations $\hat{\beta}_m$ and $\hat{\beta}^*_m$ such
that $\hat{\beta}_m\circ E^{(\gamma_{Q_m})^{*}\lambda_n}$ and
$\hat{\beta}^*_m\circ E^{(\gamma_{Q_m})^{*}\lambda}$ fix $1$, $i$
and $-1$. Since the limits of two sequences
$\{E^{(\gamma_{Q_m})^{*}\lambda_n}\}_{m=1}^\infty$ and
$\{E^{(\gamma_{Q_m})^{*}\lambda_n}\}_{m=1}^\infty$ are same,
$\hat{\beta}_m$ and $\hat{\beta}^*_m$ converge the same M\"obius
transformation. Hence, the limits of $\hat{\beta}_m\circ
E^{(\gamma_{Q_m})^{*}\lambda_n}$ and $\hat{\beta}^*_m\circ
E^{(\gamma_{Q_m})^{*}\lambda}$ also agree.

On the other hand, from the definition of earthquakes we have that
\begin{align*}
\mathcal{EM}([\hat{\beta}_m\circ
E^{(\gamma_{Q_m})^{*}\lambda_m}|_{S^1}])
&=\mathcal{EM}([E^{(\gamma_{Q_m})^{*}\lambda_m}|_{S^1}])
=(\gamma_{Q_m})^{*}\lambda_m \\
&=\mathcal{EM}([h_m\circ \gamma_{Q_m}]) =\mathcal{EM}([g_m])
\end{align*}
and
\begin{align*}
\mathcal{EM}([\hat{\beta}_m\circ
E^{(\gamma_{Q_m})^{*}\lambda}|_{S^1}])
&=\mathcal{EM}([E^{(\gamma_{Q_m})^{*}\lambda}|_{S^1}])
=(\gamma_{Q_m})^{*}\lambda \\
&=\mathcal{EM}([h\circ \gamma_{Q_m}])
=\mathcal{EM}([g^*_m]).
\end{align*}
Since the earthquake measure map is bijective and all maps
$\hat{\beta}_m\circ E^{(\gamma_{Q_m})^{*}\lambda_m}$,
$\hat{\beta}_m\circ E^{(\gamma_{Q_m})^{*}\lambda}$, $g_m$, and
$g^*_m$ fix $1$, $i$ and $-1$, we conclude $\hat{\beta}_m\circ
E^{(\gamma_{Q_m})^{*}\lambda_m}|_{S^1}=g_m$ and $\hat{\beta}_m\circ
E^{(\gamma_{Q_m})^{*}\lambda}|_{S^1}=g^*_m$. However, this
contradicts that the limits $g$ and $g^*$ of $\{g_m\}_{m=1}^\infty$
and $\{g^*_m\}_{m=1}^\infty$ are distinct. The contradiction proves
Theorem 1.

\section{Approximations by discrete laminations\textbf{}}

The purpose of this section is to propose a candidate for a class of
\emph{nice} measured laminations in order to better understand the
universal Teichm\"uller space using earthquake maps. Indeed, we will
show that \emph{discrete measured laminations} are dense in
$\mathcal{ML}_b(\mathbb{D})$ with respect to the Fr\'echet topology.

\subsection{Discrete laminations}
A geodesic lamination $\mathcal{L}$ is said to be \emph{discrete} if
any compact set $K\subset\mathbb{D}$ intersects only finitely many
leaves of $\mathcal{L}$. Equivalently, $\mathcal{L}$ is a discrete
geodesic lamination if it is discrete subset of $\mathcal{G}$. A
measured lamination $\lambda$ is, by definition, \emph{discrete} if
its support $|\lambda|$ is a discrete subset of $\mathcal{G}$. To
show the density of discrete measured laminations in
$\mathcal{ML}_b(\mathbb{D})$, we give some notations needed in the
proof of the density theorem.

\subsubsection*{Extreme geodesics and peaks}
We recall that a box of geodesics is the product set $I\times J\in
\mathcal{G}$ where $I$ and $J$ are disjoint closed intervals of
$\partial \mathbb{D}=S^1$. In this proof, we generalize the notion
of boxes such that either $I$ or $J$ is allowed to be a point, open
or half-open interval. For a generalized box $Q=I\times J$, we
define the \emph{extreme geodesics} $\{\ell^1_Q ,\ell^2_Q\}$ for $Q$
as follows. Suppose that both $I$ and $J$ are non-degenerate
intervals. Let ${\rm Int}(I)=(a,b)$ and ${\rm Int}(J)=(c,d)$. Then,
we set $\ell^1_Q=\geodesic{a}{d}$ and $\ell^2_Q=\geodesic{b}{c}$.
When exactly one of the intervals is degenerate, say when $I=\{a\}$
and ${\rm Int}(J)=(c,d)$, we set $\ell^1_Q=\geodesic{a}{d}$ and
$\ell^2_Q=\geodesic{a}{c}$. When $I$ and $J$ are both degenerate,
$\ell^1_Q$ and $\ell^2_Q$ are defined to be the geodesic connecting
$I$ and $J$. See Figure \ref{fig:GeodesicBox}.
\begin{figure}
\includegraphics[height=5cm]{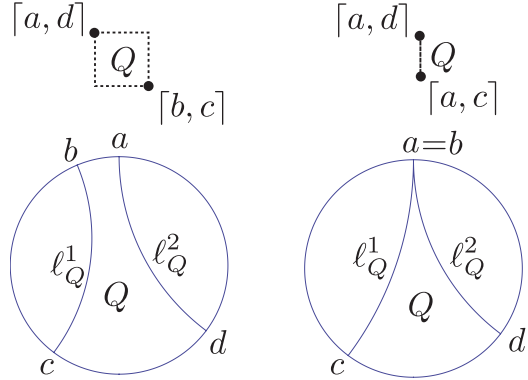}
\caption{Generalized boxes in $\mathcal{G}$ and their extreme geodesics.
}
\label{fig:GeodesicBox}
\end{figure}

Let $Q=I\times J$ be a generalized box in $\mathcal{G}$ and
$\mathcal{L}$ a geodesic lamination. Let
$\bar{Q}=\bar{I}\times\bar{J}$ be the closure of $Q$, where
$\bar{I},\bar{J}$ are closures of $I,J$. A leaf $g$ of $\mathcal{L}$
is said to be \emph{peak with respect to $Q$} if $g\in \bar{Q}$ and
one of the two components of $\mathbb{D}\setminus g$ does not
contain leaves of $\mathcal{L}\cap Q$. By definition, when
$\mathcal{L}\cap \bar{Q}$ contains at least two leaves, there is
exactly two peak geodesics of $\mathcal{L}$ with respect to $Q$. In
addition, if an extreme geodesic of $Q$ is a leaf of $\mathcal{L}$,
it is also a peak geodesic of $\mathcal{L}$ with respect to $Q$.

\subsection{Density of discrete laminations}
We are ready to prove the density of discrete laminations.

\begin{theorem}[Discrete laminations are dense] \label{thm:approximation}
The set of discrete bounded measured laminations is dense in
$\mathcal{ML}_b(\mathbb{D})$ in the Fr\'echet topology.
\end{theorem}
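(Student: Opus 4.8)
The plan is to reduce the statement to the \emph{uniform-weak$^*$} topology on $\mathcal{ML}_b(\mathbb{D})$ and then to exhibit, for an arbitrary $\lambda\in\mathcal{ML}_b(\mathbb{D})$, an explicit sequence of discrete bounded measured laminations converging to it. Since the Fr\'echet topology is metrizable, density is the same as sequential density, so it is enough to approximate a single $\lambda$. By Lemma \ref{lem:weak-conv_and_Holder}, it suffices to construct discrete measured laminations $\lambda_n$ with $\sup_n\|\lambda_n\|_{Th}<\infty$ and $\lambda_n\to\lambda$ in the uniform-weak$^*$ topology; this reduction is what makes the problem tractable, because the uniform-weak$^*$ condition involves no H\"older exponent and therefore handles all the $\nu$-norms simultaneously.

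For the construction, fix $n$, choose a locally finite cover $\{Q_{n,i}\}_i$ of the support $|\lambda|$ by boxes of geodesics of small size (small Liouville measure and controlled shape) with uniformly bounded overlap multiplicity, pass to a subordinate partition $\{P_{n,i}\}$ of $|\lambda|$, set $m_{n,i}=\lambda(P_{n,i})$, and let $g_{n,i}\in\overline{Q_{n,i}}$ be a peak geodesic of $|\lambda|$ with respect to $Q_{n,i}$. Define $\lambda_n=\sum_i m_{n,i}\,\delta_{g_{n,i}}$. Because each $g_{n,i}$ is a leaf of the geodesic lamination $|\lambda|$, no two of the $g_{n,i}$ cross, so $\lambda_n$ is a genuine measured lamination, and local finiteness of the cover makes $|\lambda_n|$ a discrete subset of $\mathcal{G}$; thus $\lambda_n$ is discrete. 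The uniform Thurston bound $\|\lambda_n\|_{Th}\le C\|\lambda\|_{Th}$ follows from the smallness of the boxes: if a unit arc $I$ meets $g_{n,i}\in\overline{Q_{n,i}}$, then every leaf of $|\lambda|\cap Q_{n,i}$ meets a slightly enlarged arc, and one covers the enlarged arc by boundedly many unit arcs while using the bounded overlap of the cover.

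It remains to establish the uniform-weak$^*$ convergence, which is the core of the argument. Fix a continuous $f$ with ${\rm supp}(f)\subset Q^*$, let $Q$ be an arbitrary box with $L(Q)=\log 2$, and put $f_Q=f\circ\gamma_Q^{-1}$; we must bound $\bigl|\int_Q f_Q\,d(\lambda_n-\lambda)\bigr|$ uniformly in $Q$. Only the boxes $Q_{n,i}$ meeting $Q$ contribute, and within each of them $\lambda_n$ replaces the relevant part of $\lambda$ by a single atom lying inside the small box $Q_{n,i}$; hence that contribution is at most $\bigl(\mathrm{osc}_{Q_{n,i}\cap Q} f_Q\bigr)\cdot\lambda(Q_{n,i})$ up to a harmless term from the partition boundaries. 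Summing over $i$ and using the Thurston bound gives an estimate of the form $C\,\|\lambda\|_{Th}\,\omega(f_Q;r_n)$, where $\omega(\,\cdot\,;r)$ denotes a modulus of continuity and $r_n$ bounds, inside the fixed compact box $Q^*$, the size of the sets $\gamma_Q^{-1}(Q_{n,i}\cap Q)$.

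The main obstacle is to show that $r_n\to0$ \emph{uniformly over all boxes $Q$ with $L(Q)=\log 2$}. This is a bounded-distortion fact: a box of Liouville size $\log 2$ cannot magnify a small sub-box by more than a universal factor when carried onto $Q^*$ by $\gamma_Q^{-1}$. I would prove it from the observations that all boxes with $L(Q)=\log 2$ form one orbit of ${\rm M\ddot{o}b}(\mathbb{D})$, that the Liouville measure is M\"obius invariant, and that the two extreme geodesics of such a box are at a fixed hyperbolic distance; the delicate point is that Liouville-smallness alone does not prevent long thin boxes accumulating at the diagonal, so the cover $\{Q_{n,i}\}$ must be taken with its shapes controlled as well (equivalently, one truncates $Q_{n,i}$ to $Q_{n,i}\cap Q$ before measuring sizes). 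Granting this uniformity, the uniform continuity of $f$ on the compact set $Q^*$ forces the estimate to tend to $0$ uniformly in $Q$, so $\lambda_n\to\lambda$ in the uniform-weak$^*$ topology; combined with the Thurston bound this yields the theorem via Lemma \ref{lem:weak-conv_and_Holder}.
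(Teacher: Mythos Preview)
Your overall strategy matches the paper's: reduce to the uniform-weak$^*$ topology via Lemma~\ref{lem:weak-conv_and_Holder}, discretize $\lambda$ by covering $|\lambda|$ with small boxes, and place an atom at a leaf of $|\lambda|$ inside each box carrying the corresponding mass. The Thurston bound and the reduction step are fine.

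The genuine gap is your ``bounded-distortion fact''. It is false that $r_n\to 0$ uniformly over all boxes $Q$ with $L(Q)=\log 2$, no matter how you choose the cover $\{Q_{n,i}\}$. The maps $\gamma_Q^{-1}$ range over \emph{all} M\"obius maps, and for any fixed nondegenerate box $Q_{n,i}=[a,b]\times[c,d]$ you can choose $Q=[a',b']\times[c',d']$ with $L(Q)=\log 2$, $[a',b']$ only slightly larger than $[a,b]$, and $[c',d']$ huge; then $\gamma_Q^{-1}([a,b])$ fills almost the entire side $[-i,1]$ of $Q^*$, so $\gamma_Q^{-1}(Q_{n,i}\cap Q)$ has diameter close to $\pi/2$. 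Liouville measure is M\"obius invariant, but inside the fixed compact box $Q^*$ small Liouville measure only forces the \emph{product} of the two side-lengths to be small, not each side separately; and any M\"obius-invariant ``shape'' constraint you put on $Q_{n,i}$ is destroyed once you intersect with $Q$. So the oscillation estimate $\sum_i(\mathrm{osc}_{\gamma_Q^{-1}(Q_{n,i}\cap Q)}f)\,\lambda(Q_{n,i})$ cannot be made small uniformly in $Q$ by shrinking the cover.

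The paper repairs exactly this point by changing what is made small. Instead of controlling the \emph{size} of the boxes, it controls their \emph{mass}: each $B_k^n$ is chosen with $\lambda(B_k^n)<1/n$ (and with its extreme geodesics being leaves of $|\lambda|$). Then for a fixed $f$ and modulus $\delta$, the pulled-back boxes $\gamma_Q^{-1}(B_k^n)\subset Q^*$ are split into three classes: those entirely inside $Q$ with both pulled-back sides of length $<\delta$ (handled by the oscillation of $f$), those inside $Q$ with a pulled-back side of length $\ge\delta$ (there are only $O(1/\delta)$ of these, independently of $Q$, and each contributes $O(\|f\|_\infty/n)$), and those not contained in $Q$. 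For the last class the extreme-geodesic condition is essential: since the extreme geodesics of each $B_k^n$ are leaves of a lamination, non-crossing forces at most \emph{two} boxes $B_k^n$ to straddle the boundary of $Q$, again contributing $O(\|f\|_\infty/n)$. Your construction lacks both ingredients (mass control and the trimmed boxes with extreme geodesics in $|\lambda|$), and without them the uniform-weak$^*$ estimate does not close.
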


\begin{proof}
Fix $\lambda\in \mathcal{ML}_b(\mathbb{D})$. Let $\lambda^0$ and
$\lambda^1$ be the discrete and continuous parts of $\lambda$,
respectively. By definition, $\lambda^0$ is the sum of Dirac
measures (atoms). We identify Dirac measures appearing as terms of
$\lambda^0$ with their supports (each of them is a positive number
assigned to a point in $\mathcal{G}$).

We now fix $n$ and partition $\mathcal{G}$ into a locally finite,
countable family of
 boxes $\{B'_s\}_{s=1}^\infty$ with mutually disjoint interiors such
that $L(B'_s)\le \log 2$. We enumerate the terms of $\lambda^0$:
$$
\lambda^0=\sum_{s=1}^\infty\sum_m\mu_m^s
$$
such that ${\rm supp}(\mu_m^s)\subset B'_s$. If an atom belongs to
the boundary side of a box, then it is shared by at least two boxes
and at most four boxes. We fix one of the possible boxes to which
the atom belongs and write it in the above sum only once. It is
possible that $\{\mu_m^s\}_m$ consists of infinitely many Dirac
measures, for any $s$. For each $s$, we take $m_{s,n}$ such that
\begin{equation} \label{eq:atoms}
\sum_{s=1}^{\infty}\sum_{m\ge m_{s,n}}\mu_m^k(B'_s)<1/n.
\end{equation}
Notice from the definition that
$$
\lambda^0_n:=\sum_{s=1}^\infty
\sum_{m\le m_{s,n}}\mu_m^s
$$
is a discrete sub-measured lamination of $\lambda$. We define a
measured lamination $\lambda^1_n$ by
$$
\lambda^1_n:=\lambda-\lambda^0_n
=\lambda^1+\sum_{s=1}^\infty\sum_{m> m_{s,n}}\mu_m^k
$$
We claim the following.

\medskip
\noindent {\bf Claim 1.} For any $n$, there is a locally finite
collection $\{B_k^n\}_{k=1}^\infty$ of countably many, mutually
disjoint generalized boxes with the following properties.
\begin{itemize}
\item[(1)]
$\{B_k^n\}_{k=1}^\infty$ covers $|\lambda^1_n|$.
\item[(2)]
$\lambda^1_n(B_k^n)<1/n$ and $L(B_k^n)\le \log 2$ for all $k$, and
\item[(3)]
extreme geodesics of $B_k^n$ are leaves of $|\lambda^1_n|$.
\end{itemize}

\begin{proof}[Proof of Claim 1]
By the definition of $\lambda^1_n$, we can divide each $B'_s$ into a
finite collection of non-degenerate closed boxes such that its
$\lambda^1_n$-measure is less than $1/n$ and interiors of distinct
boxes are disjoint. We define a sub-collection
$\{{B'}_k^n\}_{k=1}^\infty$ to consist of all the above boxes
(running all $s$) which intersect the support $|\lambda^1_n|$ of
$\lambda^1_n$.

We now fix one box ${B'}_k^n$ and modify it appropriately to get the
collection of generalized boxes as in the claim.

\medskip
\paragraph{{\bf Case 1.1 :
${B'}_k^n\cap |\lambda^1_n|$ consists of one point.}}\ When
${B'}_k^n\cap |\lambda^1_n|$ is not an atom, then it has to belong
to a boundary side ${B'}_k^n$. We drop ${B'}_k^n$ from the family of
boxes. Suppose ${B'}_k^n\cap |\lambda^1_n|$ is an atom
$\lambda'_{k,n}$ of $\lambda$, we again drop ${B'}_k^n$ from the
collection of boxes and add $\lambda'_{k,n}$ to $\lambda^0_n$. Since
$\{{B'}_k^n\}_{k=1}^\infty$ is locally finite, even if we continue
this procedure infinitely (but countably) many times, $\lambda^0_n$
is still a locally finite sublamination of $\lambda$ (cf. (1) in
Figure \ref{fig:BoxesQ}).
\begin{figure}
\includegraphics[height=4cm]{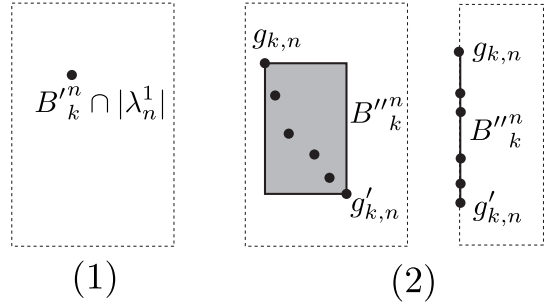}
\caption{Boxes bounded by broken lines represents ${B'}_k^n$.
}
\label{fig:BoxesQ}
\end{figure}
%

\medskip
\paragraph{{\bf Case 1.2 :
${B'}_k^n\cap |\lambda^1_n|$ contains at least two points.}}\ Let
$g_{k,n}$ and $g'_{k,n}$ be peak geodesics of $|\lambda^1_n|$ with
respect to ${B'}_k^n$. We replace the box ${B'}_k^n$ by a box
${B''}_k\subset {B'}_k^n$ whose extreme geodesics are $g_{k,n}$ and
$g'_{k,n}$ (cf. (2) in Figure \ref{fig:BoxesQ}). If it happens that
$g_{k,n}$ and $g'_{k,n}$ share the same endpoint, then ${B''}_k^n$
is a generalized box in our sense (cf. the right figure of (2) in
Figure \ref{fig:BoxesQ}).

\medskip
From the definition, the family $\{{B''}_k^n\}_{k=1}^\infty$ of the
resulting boxes is locally finite and satisfies the properties (1),
(2) and (3) in the claim.

It is possible that some of the obtained closed boxes intersect
along their boundaries. In this case, we divide the closed box into
an open box which is the interior and into boundary sides which are
generalized boxes. Each of the boundary sides is divided further
into finitely many generalized boxes such that the new family of
generalized boxes is pairwise mutually disjoint. Thus, after
renumbering with respect to $k$ if necessary, we finally obtain the
family of generalized boxes $\{B_k^n\}_{k=1}^\infty$ as we claimed.
\end{proof}

Let us continue the proof of the density theorem. Fix $n\in
\mathbb{N}$. Let $\{B_k^n\}_{k=1}^\infty$ be the family of boxes
from Claim 1. We fix $g_k^n\in B_k^n\cap |\lambda|$ arbitrary, and
define
\begin{align*}
\lambda^2_n &:=\sum_{k=1}^\infty \lambda_n^1(B_k^n)\cdot
\delta_{g_k^n} \quad
\mbox{and} \\
\lambda_n &:=\lambda^0_n+\lambda^2_n,
\end{align*}
where $\delta_{g_k^n}$ is the dirac measure on $\mathcal{G}$ with
support $g_k^n$. Since $\{B_k^n\}_{k=1}^\infty$ is locally finite,
so is $\lambda_n$. Furthermore, $\lambda_n$ is a measured geodesic
lamination, because leaves of $\lambda_n$ are leaves of $\lambda$.

We will prove that $\lambda_n$ converges to $\lambda$ in the
Fr\'echet topology, which implies that discrete bounded measured
laminations are dense in $\mathcal{ML}_b(\mathbb{D})$. We need the
following claim to show the convergence.

\medskip
\noindent {\bf Claim 2.} The following holds.
\begin{itemize}
\item[(1)]
For any box $Q$ in $\mathcal{G}$, there are at most two boxes from
the family $\{B_k^n\}_{k=1}^{\infty}$ such that $B_k^n\cap
Q\neq\emptyset$ but $B_k^n\not\subset Q$.
\item[(2)]
The sequence $\{\lambda_n\}_{n=1}^\infty$ has uniformly bounded
Thurston norms. In particular, $\lambda_n\in
\mathcal{ML}_b(\mathbb{D})$.
\end{itemize}

\begin{proof}[Proof of Claim 2]
(1)\quad Let $B_k^n$ be a box satisfying $g_k^n\in Q$ but
$B_k^n\not\subset Q$. Let $Q=[a,b]\times [c,d]$ and
$B_k^n=[x,y]\times [z,w]$. Without loss of generality, we may assume
that $b$ is in the interior of $[x,y]$. Then, there is no box
$B^n_{k'}=I'\times J'$ such that $B^n_{k'}\cap Q\neq\emptyset$ and
$I'\cap [c,z]\neq\emptyset$ or $J'\cap [c,z]\neq\emptyset$. This
follows because the extreme geodesics of $B^n_{k'}$ are contained in
a component of $\mathbb{D}\setminus \geodesic{y}{z}$ whose closure
contains $c$, and hence, no geodesic in $B^n_{k'}$ can connect
$[a,b]$ and $[c,d]$. (Figure \ref{fig:geodesic_Q_ends}).
\begin{figure}
\includegraphics[height=4cm]{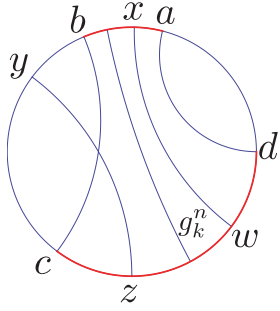}
\caption{(1) in Claim 2 of the proof of Theorem \ref{thm:approximation}.}
\label{fig:geodesic_Q_ends}
\end{figure}
If there is another box $B^n_{k_1}=[x_1,y_1]\times [z_1,w_1]$ such
that $g^n_{k_1}\in Q$ and $B^n_{k_1}\not\subset Q$, then either
$a\in [x_1,y_1]$ or $d\in [x_1,y_1]$ or $a\in [z_1,w_1]$ or $d\in
[z_1,w_1]$. The above reasoning implies that there could be no more
boxes with the above property. Thus, there are at most two boxes
with the property that $B_k^n\cap Q\neq\emptyset$ but
$B_k^n\not\subset Q$.

\medskip
\noindent (2)\quad Fix $\nu$ with $0<\nu\le 1$. Let $(\varphi,Q)\in
{\rm test}(\nu)$. From (1) of the claim, we get
\begin{align*} \label{eq:bounded_norms_2}
\int_Q\varphi d\lambda_n &\leq \lambda^0_n(Q)+ \sum_{g_k^n\in
Q}\varphi (g_k^n)\lambda^1_n(B_k^n) \le
\lambda(Q)+\sum_{B_k^n\cap Q\ne \emptyset}\lambda^1_n(B_k^n) \\
&\le
\lambda(Q)+(\lambda(Q)+(1/n)\times 2)
\le 2\sup_{Q}\lambda(Q)+2,
\end{align*}
because $\varphi(g_k^n)\le \|\varphi\|_\nu\le 1$ and $\lambda^0_n$
is a sub-measured lamination of $\lambda$, where $Q$ in the last
term runs over all boxes with $L(Q)=\log 2$. By Lemma
\ref{lem:Holder_top_and_Measure_top}, we deduce that the sequence
$\{\lambda_n\}_{n=1}^\infty$ has uniformly bounded Thurston norms.
\end{proof}

Let us continue with the proof that $\lambda_n$ converges to
$\lambda$ in the Fr\'echet topology. Let $Q$ be a square with
$L(Q)=\log 2$ and $f$ be a continuous function on $\mathcal{G}$
whose support is in $Q^*$. Let $\epsilon>0$. We take $\delta>0$ such
that $|f(\ell)-f(\ell')|<\epsilon$ when $d(\ell,\ell')\le \delta$,
where $d$ is the fixed metric on $\mathcal{G}$ induced by the angle
metric on $S^1$ with respect to $0\in\mathbb{D}$ (cf.
\S\ref{subsec:space_of_geodesics}).

Take $B_k^n$ with $Q\cap B_k^n\ne \emptyset$. Let
$\gamma_Q^{-1}(B_k^n)=I\times J$. Suppose that $I\cap [-i,1]$ and
$J\cap [i,1]$ are non-empty. We set
\begin{equation*} \label{eq:lambda_Q_n}
\hat{\lambda}_{Q,n}:= (\gamma_Q)^*(\lambda_n)-d(\gamma_Q)^*(\lambda)
= (\gamma_Q)^*(\lambda_n^2)-(\gamma_Q)^*(\lambda^1_n)
\end{equation*}
for the simplicity. We consider the following three cases for
$B_k^n$.

\medskip
\noindent
{\bf Case 1.}\
$B_k^n\subset Q$ and the length of $I$ and $J$ are less than $\delta$.

\medskip
In this case,
we have
\begin{align*}
\left| \int_{\gamma_Q^{-1}(B_k^n)} f\, d\hat{\lambda}_{Q,n} \right|
&=
\left|
f(\gamma_Q^{-1}(g_k^n))\lambda^1_n(B_k^n)
-
\int_{\gamma_Q^{-1}(B_k^n)}
f\,
d((\gamma_Q^{-1})^*(\lambda^1_n))\right| \\
&\le
\epsilon \lambda^1_n(B_k^n).
\end{align*}
Therefore, the summation over all boxes $B_k^n$ in this case gives
\begin{equation} \label{eq:convergence_case1}
\sum_{\left\{\mbox{$B_k^n$'s in Case 1}\right\}}
\left|
\int_{\gamma_Q^{-1}(B_k^n)}
f\,
d\hat{\lambda}_n
\right|
\le \epsilon \lambda^1_n(Q)
\le \epsilon \lambda(Q).
\end{equation}

\medskip
\noindent {\bf Case 2.}\ $B_k^n\subset Q$ and, if
$\gamma_Q^{-1}(B_k^n)=I\times J$ then either $I$ or $J$ has length
at least $\delta$.

\medskip
Notice that since $Q^*$ is a fixed box,
the number of such $B_k^n$ in this case is $O(1/\delta)$.
Hence,
we have
\begin{align}
\sum_{\left\{\mbox{$B_k^n$'s in Case 2}\right\}}
\left|
\int_{\gamma_Q^{-1}(B_k^n)}
f\,
d\hat{\lambda}_{Q,n}
\right|
&=
O\left(\sum (\lambda^2_n(B_k^n)+\lambda^1_n(B_k^n))\|f\|_\infty\right) \nonumber
\\
&=
O\left(\|f\|_\infty/(n\delta)\right)
\label{eq:convergence_case2}
\end{align}

\medskip
\noindent
{\bf Case 3.}\
$B_k^n\not\subset Q$.

\medskip
Notice that
\begin{equation*} \label{eq:convergence_case3-1}
\left|
\int_{\gamma_Q^{-1}(B_k^n)}
f\,
d\hat{\lambda}_n
\right|
\le (\lambda^2_n(B_k^n)+\lambda^1_n(B_k^n))\|f\|_\infty
\le 2\|f\|_\infty/n.
\end{equation*}
By (1) of Claim 2, there are at most two such boxes. Hence, we have
\begin{equation} \label{eq:convergence_case3}
\sum_{\left\{\mbox{$B_k^n$'s in Case 3}\right\}}
\left|
\int_{\gamma_Q^{-1}(B_k^n)}
f\,
d\hat{\lambda}_{Q,n}
\right|
\le 4\|f\|_\infty/n.
\end{equation}

We can now complete the proof of the convergence. Indeed, we take
$n$ sufficiently large such that $n\delta>1/\epsilon$. Then, from
the three cases above and Lemma
\ref{lem:Holder_top_and_Measure_top}, we conclude
\begin{align*}
\sup_Q\left|
\int_{Q^*}
f\,
d\hat{\lambda}_{Q,n}
\right|
&\le
\sup_Q\left\{
\sum_{\{B_k^n\cap Q\ne\emptyset\}}
\left|
\int_{\gamma_Q^{-1}(B_k^n)}
f\,
d\hat{\lambda}_{Q,n}
\right|
\right\}\\
&\le
\sup_Q
\left\{\epsilon \lambda(Q)+
O\left(\|f\|_\infty/(n\delta)\right)
+4\|f\|_\infty/{n}\right\} \\
&=
\epsilon \left(\sup_Q \lambda(Q)\right)+O(\epsilon)
=O(\epsilon),
\end{align*}
where the supremum is taken over all $Q$ with $L(Q)=\log 2$. Since
$\hat{\lambda}_{Q,n}=(\gamma_Q)^*(\lambda_n)-(\gamma_Q)^*(\lambda)$
and $\{\lambda_n\}_{n=1}^\infty$ is uniformly bounded, from Lemma
\ref{lem:weak-conv_and_Holder}, we have that $\lambda_n$ converges
to $\lambda$ in the Fr\'echet topology.
\end{proof}

\vskip .2 cm

Theorem \ref{thm:approximation} and Theorem \ref{thm:EMisHomeo}
immediately imply Theorem \ref{thm:CountableEarthquakeTheorem}.

\section{Infinitesimal Earthquakes and Vector fields}
\label{sec:infinitesimal_earthquake} In this section, we consider
the vector fields on $\partial \mathbb{D}$ which arise by
differentiating the paths of earthquakes. The aim is to prove the
equivalence between the Fr\'echet topology on earthquake measures
and the Zygmund topology on the vector fields (cf. Theorem
\ref{thm:Frechet_and_Zygmund}) which is an analogy to Theorem
\ref{thm:EMisHomeo}.

\subsection{Vector fields}
Let $\lambda$ be a bounded measured lamination. From now on, we fix
a stratum $A$ of $\lambda$ such that $A$ is either a gap or a
geodesic which is not an atom of $\lambda$. Every leaf $\ell$ of
$\lambda$ is oriented as a part of the boundary of the component of
$\mathbb{D}\setminus \ell$ containing $A$. Let $a$ be the initial
point and $b$ the terminal point of $\ell$ for the given
orientation. Let $[a,b]$ be an oriented interval connecting
endpoints of $\ell$ (cf. Figure \ref{fig:geodesic_orientation}).
\begin{figure}
\includegraphics[height=4cm]{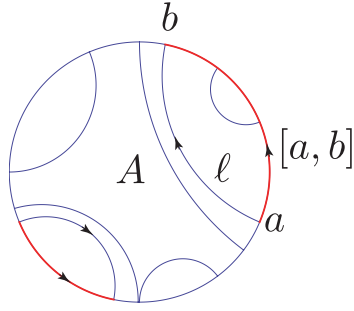}
\caption{Orientations of leaves and associated intervals.}
\label{fig:geodesic_orientation}
\end{figure}
Then,
we set
\begin{equation*}
\dot{E}^\lambda_{\ell}(z)= \left\{
\begin{array}{cl}
0 & \mbox{for $z$ outside of $[a,b]$} \\
\frac{(z-a)(z-b)}{a-b} & \mbox{for $z\in [a,b]$}.
\end{array}
\right.
\end{equation*}
When $\ell$ is not a leaf of $\lambda$, we put
$\dot{E}^\lambda_\ell(z)=0$ for all $z\in \partial \mathbb{D}$. For
any point $z\in \partial \mathbb{D}$, $\dot{E}^\lambda_{\ell}(z)$ is
a function of $\ell\in\mathcal{G}$.

We consider the integral
\begin{equation} \label{eq:integral_lamination}
\dot{E}^\lambda(z):= \int_{\mathcal{G}}\dot{E}^\lambda_\ell
(z)d\lambda(\ell)
\end{equation}
for a measured lamination $\lambda$. For a finite lamination
$\lambda=\sum_{i=1}^m\lambda_i\ell_i$, by definition, it holds
$$
\dot{E}^\lambda(z)=\sum_{i=1}^m\lambda_i\dot{E}^\lambda_{\ell_i}(z).
$$
One can show that the integral $\dot{E}^\lambda$ in
\eqref{eq:integral_lamination} is well-defined for all $\lambda\in
\mathcal{ML}_b(\mathbb{D})$ by an approximation argument (see
\cite{gardiner}). We give a more direct proof of the convergence of
the integral in Appendix (cf. \S\ref{sec:convergence_integral}).

\subsubsection{Infinitesimal earthquakes}
For $\lambda\in \mathcal{ML}_b(\mathbb{D})$ and $t>0$, we normalize
$E^{t\lambda}$ to be the identity on the stratum $A$ which we have
fixed before. Gardiner-Hu-Lakic \cite{GHL} proved that the integral
\eqref{eq:integral_lamination} gives the tangent vector fields to
the paths of earthquake deformations:
\begin{equation} \label{eq:equation_E_lambda}
\dot{E}^\lambda(z)= \left.\frac{d}{dt}E^{t\lambda}(z) \right|_{t=0}
\end{equation}
for $z\in \partial \mathbb{D}$ (cf. \cite{GHL}). Let $\zyg(\partial
\mathbb{D})$ be the Banach space of Zygmund functions on
$\partial\mathbb{D}$ modulo the subspace of quadratic polynomials
(cf. \S\ref{subsec:Frechet_Zygmund}). Gardiner \cite{gardiner} also
proved the \emph{infinitesimal earthquake theorem}, which states
that the map
\begin{equation} \label{eq:gardiner-earthquake}
\mathcal{ML}_b(\mathbb{D})\ni \lambda\mapsto \dot{E}^\lambda\in \zyg(\partial \mathbb{D})
\end{equation}
is bijective (Theorem 5.1 of \cite{gardiner}).

\subsubsection{Convergence of Vector fields}
The following proposition is well-known.

\begin{proposition}
\label{prop:convergence_of_integral} Let $\lambda\in
\mathcal{ML}_b(\mathbb{D})$ and let $\{\lambda_n\}_{n=1}^\infty$ be
a sequence in $\mathcal{ML}_b(\mathbb{D})$ with uniformly bounded
Thurston norms. If $\{\lambda_n\}_{n=1}^\infty$ converges to
$\lambda$ in the weak* topology, then $\dot{E}^{\lambda_n}$
pointwise converges to $\dot{E}^\lambda$ on $\partial \mathbb{D}$.
\end{proposition}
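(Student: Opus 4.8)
We sketch the (well-known) argument. The plan is to reduce the statement, one boundary point at a time, to a scalar limit of integrals against $\lambda_n$, and to handle that limit by a continuity-set version of the portmanteau theorem after splitting off a non-compact ``tail'' that is controlled by the estimates of \S\ref{sec:convergence_integral}.

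Fix $z\in\partial\mathbb D$. A measured lamination has at most countably many atoms and at most one leaf through any given point of $\mathbb D$, so the union of its atom leaves is a subset of $\mathbb D$ of (two-dimensional) measure zero; hence we may pick a base point $z_0\in\mathbb D$ lying on no atom leaf of $\lambda$. Orienting every leaf of every lamination in sight as part of the boundary of the component of its complement containing $z_0$ produces, for each $\mu\in\mathcal{ML}_b(\mathbb D)$, a definite representative of $\dot E^\mu$, namely $\dot E^\mu(z)=\int_{\mathcal G}g_z\,d\mu$, where $g_z(\ell):=\dot E^\mu_\ell(z)$ is, for this orientation convention, the \emph{same} bounded Borel function of $\ell$ for all $\mu$ (cf. \eqref{eq:integral_lamination}). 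Explicitly $g_z(\ell)=\tfrac{(z-a)(z-b)}{a-b}$ when $z$ lies in the arc of $\ell$ not containing $z_0$ and $g_z(\ell)=0$ otherwise; thus $g_z$ vanishes off the set of geodesics weakly separating $z$ from $z_0$, and it is continuous on $\mathcal G$ except on the set $D_z$ of geodesics through $z_0$. Since at most one leaf of $\lambda$ passes through $z_0$ and it is not an atom, $\lambda(D_z)=0$. It therefore suffices to prove $\int_{\mathcal G}g_z\,d\lambda_n\to\int_{\mathcal G}g_z\,d\lambda$.

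Put $K:=\sup_n\|\lambda_n\|_{Th}<\infty$, enlarged so that $\|\lambda\|_{Th}\le K$ too, and fix $\epsilon>0$. For $\delta>0$ let $N_\delta\subset\mathcal G$ be the open set of geodesics having an endpoint within angular distance $\delta$ of $z$. A direct estimate of the kernel shows $|g_z|\le C_z\,\delta$ on $N_\delta$ (when one endpoint is $\delta$-close to $z$ the quotient is comparably small), and the estimates underlying the convergence of the integral \eqref{eq:integral_lamination} in \S\ref{sec:convergence_integral} give, for every $\mu$ with $\|\mu\|_{Th}\le K$,
\[
\int_{N_\delta}|g_z|\,d\mu\ \le\ \rho_{z,K}(\delta),\qquad \rho_{z,K}(\delta)\xrightarrow[\delta\to0^+]{}0 .
\]
Choose $\delta$ so small that $2\rho_{z,K}(\delta)<\epsilon/2$ and, in addition, so that $\lambda(\partial N_\delta)=0$ — possible because the sets $\partial N_\delta$ ($\delta>0$) are pairwise disjoint, hence only countably many of them are $\lambda$-charged. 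Then the two tails $|\int_{N_\delta}g_z\,d\lambda_n|$ and $|\int_{N_\delta}g_z\,d\lambda|$ are each at most $\rho_{z,K}(\delta)$.

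It remains to treat $g:=g_z\,\mathbf 1_{\mathcal G\setminus N_\delta}$. This is a bounded Borel function supported in the compact set $\overline{{\rm supp}(g_z)\setminus N_\delta}$ — compact because any geodesic in ${\rm supp}(g_z)$ lying close to the diagonal of $S^1\times S^1$ must be a short geodesic near $z$ and hence belongs to $N_\delta$ — and its discontinuity set is contained in $D_z\cup\partial N_\delta$, which is $\lambda$-null by the choices above. Since every compact $C\subset\mathcal G$ satisfies $\sup_n\lambda_n(C)\le N_C\,K<\infty$ (cover $C$ by finitely many boxes of Liouville measure $\log 2$ and apply Lemma~\ref{lem:Thurston_norm_and_Holder_distribution}), the sequence $\{\lambda_n\}$ is locally uniformly bounded, so the continuity-set form of the portmanteau theorem applies and yields $\int g\,d\lambda_n\to\int g\,d\lambda$. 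Combining with the tails, $|\int_{\mathcal G}g_z\,d\lambda_n-\int_{\mathcal G}g_z\,d\lambda|<\epsilon$ for all large $n$, which is the asserted pointwise convergence.

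The step I expect to be the real obstacle is the tail estimate: $g_z$ has non-compact support on $\mathcal G$ (its support accumulates on every geodesic through $z$), so weak\* convergence of $\lambda_n$ is by itself inadequate, and one needs $\int_{N_\delta}|g_z|\,d\mu$ to be small uniformly over all $\mu$ with $\|\mu\|_{Th}\le K$. This is precisely what the estimates of \S\ref{sec:convergence_integral} provide — via a dyadic decomposition of the geodesics near $z$, each ``ring'' being crossed by a transversal of bounded length and hence carrying $\mu$-mass $\lesssim\|\mu\|_{Th}$, against which the $O(\delta)$ decay of $g_z$ is summable; the orientation/discontinuity point is a minor technicality dispatched by the choice of $z_0$.
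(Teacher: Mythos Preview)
Your argument is correct and follows the same strategy as the paper's proof (given as Proposition~\ref{thm:pointwise_convergence} in the Appendix): split the integral of the kernel $\tilde e_z$ into a ``tail'' controlled uniformly by the decay estimate of Proposition~\ref{prop:decay} and a compactly supported ``main'' part on which weak* convergence applies.  The differences are purely tactical.  The paper truncates by hyperbolic distance $d$ along the ray $I$ from $z_0$ to $z$, introduces continuous bump functions $\varphi_i$ supported on geodesics meeting a slight enlargement of $[z_0,z_d]$, and controls the resulting ``edge'' terms by placing $z_0$ and $z_d$ in non-atomic strata of $\lambda$; you instead truncate by angular distance $\delta$ of endpoints from $z$, invoke the portmanteau theorem for bounded Borel functions with $\lambda$-null discontinuity set, and arrange the null-set condition by choosing $z_0$ off atom leaves and $\delta$ generic.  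Each packaging trades one bookkeeping chore for another; neither is shorter in substance.

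One point worth making explicit: your uniform tail bound $\int_{N_\delta}|g_z|\,d\mu\to 0$ for $\|\mu\|_{Th}\le K$ is not an immediate consequence of the pointwise bound $|g_z|\le C\delta$ on $N_\delta$ alone, since the $\mu$-mass of $N_\delta\cap\operatorname{supp}(g_z)$ need not be bounded independently of $\delta$ (geodesics with one endpoint $\delta$-close to $z$ can still cross $I$ near $z_0$).  It does follow, however, by combining that pointwise bound on $N_\delta\cap\{\text{crossing }[z_0,z_d]\}$ with Proposition~\ref{prop:decay} on the remainder, optimizing in $d$; this is presumably what your final ``dyadic decomposition'' remark intends, but the sentence as written (``each ring crossed by a transversal of bounded length'') describes the paper's decomposition along $I$ rather than a decomposition of $N_\delta$ itself.
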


We shall give a proof of Proposition
\ref{prop:convergence_of_integral} in Appendix
(\S\ref{subsec:weakconv_pointwise}) for the completeness. After
that, we will give a simple proof of the formula
\eqref{eq:equation_E_lambda} using holomorphic motions and
Proposition \ref{prop:convergence_of_integral} in
\S\ref{subsec:differential_formula}.

\subsection{Fr\'echet and Zygmund}
\label{subsec:Frechet_Zygmund}
Let $V$ be a continuous function on $\partial \mathbb{D}$
safisfying $V(z)/(iz)\in \mathbb{R}$
for $z\in \partial \mathbb{D}$.
We say that $V$ is in \emph{the Zygmund class} if
there is an $M>0$ such that
\begin{equation} \label{eq:zygmund_norm}
|V(e^{i(x+t)})+V(e^{i(x-t)})-2V(e^{ix})|\le M|t|
\end{equation}
for all $0\le x<2\pi$ and $0<t<\pi$. The infimum of the constant $M$
in \eqref{eq:zygmund_norm} is called the \emph{Zygmund norm} of $V$
and we denote it by $\|V\|_{Zyg}$. Recall that $\|V\|_{Zyg}=0$ if
and only if $V$ is a quadratic polynomial. The quotient of the class
of continuous functions satisfying $V(z)/(iz)\in \mathbb{R}$ for
$z\in
\partial \mathbb{D}$ and inequality (\ref{eq:zygmund_norm})
by the subspace consisting of the quadratic polynomials becomes a
Banach space $\zyg (\partial \mathbb{D})$ with the norm $\|\cdot
\|_{Zyg}$. We call $\zyg(\partial \mathbb{D})$ the \emph{Zygmund
space}.

We define the \emph{cross-ratio norm} on $\zyg(\partial \mathbb{D})$
as follows. Let $Q=[a,b]\times [c,d]$ be a box of geodesics such
that $4$-points $a,b,c,d$ lie on $\partial \mathbb{D}$ in the
counter-clockwise. For $V\in \zyg(\partial \mathbb{D})$, we set
$$
V[Q]=\frac{V(a)-V(c)}{a-c}+\frac{V(b)-V(d)}{b-d}
-\frac{V(a)-V(d)}{a-d}-\frac{V(b)-V(c)}{b-c}.
$$
Then,
the \emph{cross-ratio norm} $\|V\|_{cr}$ of $V$ is defined by
$$
\|V\|_{cr}=\sup_{Q}|V(Q)|
$$
where $Q$ runs all boxes with $L(Q)=\log 2$. The Zygmund norm is
equivalent to the cross-ratio norm on $\zyg (\partial \mathbb{D})$
(see \cite{GL}).

\subsection{Proof of Theorem \ref{thm:Frechet_and_Zygmund}}
By Gardiner's infinitesimal earthquake theorem the map
\eqref{eq:gardiner-earthquake} is bijective. Hence it suffices to
show that the map and its inverse are both continuous.

We first check that the map \eqref{eq:gardiner-earthquake} is
continuous. Let $\lambda_n\to\lambda$ as $n\to\infty$ in the
Fr\'echet topology. Then $\|\lambda_n\|_{Th}$ is uniformly bounded.
It follows that the sequence $V_n:=\dot{E}^{\lambda_n}|_{S^1}$ has
uniformly bounded cross-ratio norms. Indeed, the cross-ratio norm
gives the infinitesimal change in the cross-ratios under the
earthquake path $t\mapsto E^{t\lambda_n}|_{\partial\mathbb{D}}$.
Assume on the contrary that $\|V_n\|_{cr}\to\infty$ as $n\to\infty$.
Then there exists a sequence $Q_n$ of boxes in $\mathcal{G}$ with
$L(Q_n)=\log 2$ such that $|V_n[Q_n]|\to\infty$ as $n\to\infty$. Let
$\gamma_{Q_n}:Q^{*}\mapsto Q_n$ be M\"obius and let
$\lambda_n':=(\gamma_n)^{*}(\lambda_n)$. Then there exists a
subsequence of $\lambda_n'$, denoted by $\lambda_n'$ for simplicity,
which converges in the weak* topology to a bounded measured
lamination $\lambda'$. Then, by Proposition
\ref{prop:convergence_of_integral}, there exist an appropriate
normalization of the earthquake vector fields such that
$\dot{E}^{\lambda_n'}|_{S^1}\to\dot{E}^{\lambda'}|_{S^1}$ pointwise
as $n\to\infty$. Since
$|V[Q_n]|=|\dot{E}^{\lambda_n'}|_{S^1}[Q^{*}]|\to\infty$ as
$n\to\infty$, this gives a contradiction. Thus the vector fields
$V_n$ have uniformly bounded cross-ratio norms.

A family of normalized Zygmund bounded maps whose cross-ratio norms
are uniformly bounded is a normal family (see \cite{GL}). If
necessary, we normalize $\dot{E}^{\lambda_n}|_{S^1}$ by adding a
quadratic polynomial, such that $\dot{E}^{\lambda_n}|_{S^1}$ is a
normal family. Assume on the contrary that
$\dot{E}^{\lambda_n}|_{S^1}\nrightarrow\dot{E}^{\lambda}|_{S^1}$ in
the cross-ratio norm topology. Then there are $C>0$ and a sequence
of quadruples $Q_n$ in $S^1$ with $L(Q_n)=\log 2$ such that
$|\dot{E}^{\lambda_n}[Q_n]-\dot{E}^{\lambda}[Q_n]|\geq C$. Let
$\gamma_{Q_n}$ be a M\"obius map such that $\gamma_{Q_n}:Q^{*}\to
Q_n$, where $Q^{*}=[-i,1]\times [i,-1]$. Then
$|\gamma_{Q_n}^{*}(\dot{E}^{\lambda_n})[Q^{*}]-
\gamma_{Q_n}^{*}(\dot{E}^{\lambda})[Q^{*}]|\geq C$ for all $n$.
Since $\|\gamma_{Q_n}^{*}(\lambda_n)\|_{Th}=\|\lambda_n\|_{Th}$ and
$\|\gamma_{Q_n}^{*}(\lambda )\|_{Th}=\|\lambda\|_{Th}$, it follows
that the Thurston norms of $\gamma_{Q_n}^{*}(\lambda_n)$ and
$\gamma_{Q_n}^{*}(\lambda )$ are uniformly bounded. Therefore, we
can extract convergent subsequences of $\gamma_n^{*}(\lambda_n)$ and
$\gamma_n^{*}(\lambda )$ in the weak* topology, which we denote by
the same letters for simplicity. The assumption on the convergence
$\lambda_n\to\lambda$ in the Fr\'echet topology implies that the
limit of $\gamma_n^{*}(\lambda_n)$ equals to the limit of
$\gamma_n^{*}(\lambda )$. On the other hand, the two sequences of
vector fields $\gamma_n^{*}(\dot{E}^{\lambda_n})$ and
$\gamma_n^{*}(\dot{E}^{\lambda})$ converge pointwise to different
limits (even different up to addition of a quadratic polynomial)
because they differ on $Q^{*}$. This implies that a single measured
lamination represents two different earthquake vector fields which
is impossible. Thus the map $\lambda\mapsto
\dot{E}^{\lambda}|_{S^1}$ is continuous.

It remains to show that the inverse map is continuous. Assume that
$\dot{E}^{\lambda_n}|_{S^1}\to\dot{E}^{\lambda}|_{S^1}$ as
$n\to\infty$ in the cross-ratio norm. We claim that there exists
$C>0$ such that $\|\lambda_n\|_{Th}<C$ for all $n$. Suppose on the
contrary that $\|\lambda_n\|_{Th}\to\infty$ as $n\to\infty$. Then
there exists a sequence $I_n$ of closed geodesic arcs whose length
is $1/n$ such that the $\lambda_n$-mass of the geodesics
intersecting $I_n$ goes to infinity as $n\to\infty$. Let $l_n$ and
$r_n$ be the leftmost and the rightmost geodesic of $|\lambda_n|$
which intersect $I_n$. It is possible that $l_n=r_n$. Let $\gamma_n$
be a M\"obius map such that the endpoints of $\gamma_n(l_n)$ are
fixed points $b<d$ in $\mathbb{R}$ and such that the endpoints of
$\gamma_n(r_n)$ converge to $b$ and $d$, respectively. Let $a<b$ and
$b<c<d$ be such that box $Q=[a,b]\times [c,d]$ satisfies $L(Q)=\log
2$. We normalize
$\dot{E}^{(\gamma_n^{-1})^{*}(\lambda_n)}|_{S^1}=(\gamma_n^{-1})^{*}(\dot{E}^{\lambda_n}|_{S^1})$
by orienting all the leaves of $|\gamma_n(\lambda_n)|$ to the left
with respect to the geodesic with endpoints $(b,d)$.

The cross-ratio norm is invariant under the push-forward by M\"obius
maps. This implies that
$\|\dot{E}^{(\gamma_n^{-1})^{*}(\lambda_n)}|_{S^1}\|_{cr}=\|\dot{E}^{\lambda_n}|_{S^1}\|_{cr}$
is bounded. Let
$V_n=\dot{E}^{(\gamma_n^{-1})^{*}(\lambda_n)}|_{S^1}$ for short. The
normalization that we imposed on $V_n$ gives that
$$
V_n[Q]=V_n(a)[\frac{1}{a-c}-\frac{1}{a-b}]+V_n(c)[\frac{-1}{a-c}+
\frac{-1}{c-d}].
$$
Both terms are non-negative. Moreover, $V_n(c)\geq
\lambda_n(I_n)\to\infty$ as $n\to\infty$, where $\lambda_n(I_n)$ is
the $\lambda_n$-mass of geodesics intersecting $I_n$. Thus
$V_n[Q]\to\infty$ as $n\to\infty$ which is a contradiction. Thus
$\|\lambda_n\|_{Th}$ is uniformly bounded.

Assume on the contrary that $\lambda_n\nrightarrow\lambda$ as
$n\to\infty$ in the Fr\'echet topology. Then, after possibly taking
a subsequence and renaming it, there exists a sequence $Q_n$ of
quadruples on $\hat{\mathbb{R}}$ such that $L(Q_n)=\log 2$ and
\begin{equation}
\label{eq: non-converg}
|\dot{E}^{\lambda_n}|_{S^1}[Q_n]-\dot{E}^{\lambda}|_{S^1}[Q_n]|\geq
c>0.
\end{equation}

Let $\gamma_n$ be M\"obius map which maps $Q=(-a,-1,1,a)$ onto
$Q_n$, where $a>1$ is chosen such that $L(Q)=\log 2$. Let
$\mu_n=(\gamma_n)^{*}(\lambda_n)$ and $\xi_n=(\gamma_n)^{*}(\lambda
)$. Since $\|\mu_n\|_{Th}$ and $\|\xi_n\|_{Th}$ are uniformly
bounded, there exist two subsequences of $\mu_n$ and $\xi_n$ with
common indexing which converge in the weak* topology. We can assume
that $\mu_n$ and $\xi_n$ converge in the weak* topology to $\mu$ and
$\xi$, respectively. By (\ref{eq: non-converg}) we get that
$|\dot{E}^{\mu}|_{S^1}[Q]-\dot{E}^{\xi}|_{S^1}[Q]|\geq c>0$ which
implies that $\mu\neq\xi$. On the other hand, since
$\dot{E}^{\lambda_n}|_{S^1}\to\dot{E}^{\lambda}|_{S^1}$ in the
Fr\'echet topology, it follows that if the push-forwards of
$\dot{E}^{\lambda_n}|_{S^1}$ and $\dot{E}^{\lambda}|_{S^1}$ by a
sequence of M\"obius maps pointwise converge then the limits have to
be equal. This is a contradiction with $\mu\neq\xi$ by the
uniqueness of the earthquake measures. Thus $\lambda_n\to\lambda$ as
$n\to\infty$ in the Fr\'echet topology which is what we needed.

\section{Appendix : The integral $\dot{E}^\lambda$}
\label{sec:convergence_integral} In this section, we consider the
integral presentation of the earthquake vector field. We prove (see
\S\ref{subsec:well-definedness_integral}) that the integral in
\eqref{eq:integral_lamination} is well-defined.

\subsection{Strata and restricted measures}
Recall that a stratum of a (measured) geodesic lamination $\lambda$
is either a leaf of $\lambda$ or the closure of a compoment of
$\mathbb{D}\setminus \lambda$. By a \emph{generalized stratum}, we
mean either a stratum of $\lambda$ or a point of $\partial
\mathbb{D}$.

Let $\lambda$ be a measured lamination. Let $A$ and $B$ be two
generalized strata of $\lambda$. We denote by $\lambda_{A,B}$ a
measured lamination whose support consists of leaves of $\lambda$
separating $A$ and $B$ in $\mathbb{D}$, and a leaf in $\partial A$
(resp. $\partial B$) facing $B$ (resp. $A$), if $A$ (resp. $B$) is a
gap. The measure is defined to be the restriction of $\lambda$ on
the above set of geodesics. Thus, $\lambda_{A,B}$ is a measured
geodesic lamination.

Alternatively, take a geodesic $I$ connecting $A$ and $B$ where
$A\cap I$ and $B\cap I$ are points. When either $A$ or $B$, say $B$,
is a point of $\partial \mathbb{D}$, we set $I$ to be a geodesic ray
from a point of $A$ terminating at $B$ such that $A\cap I$ consists
of a point. We can define $I$ in the similar way when both $A$ and
$B$ are points of $\partial \mathbb{D}$. Let $|\lambda|_I$ be leaves
of $\lambda$ intersecting $I$. Notice that the set $|\lambda|_I$ is
 independent of the choice of the geodesic $I$. Since $I$
is closed, $|\lambda|_I$ is a geodesic lamination, that is, it is a
closed subset of $\mathcal{G}$. Hence the restriction of $\lambda$
to $|\lambda|_I$ defines a Borel measure on $\mathcal{G}$ and hence
it is recognized as a measured lamination $\lambda_{A,B}$ on
$\mathbb{D}$. When we specify the geodesic $I$, we denote
$\lambda_{A,B}$ by $\lambda_I$.

In this notation, if $B$ is a point of $\partial \mathbb{D}$ and
$B\in \partial A$, we recognize $\lambda_I=\lambda_{A,B}$ as the
zero measure. This notation will appear in Proposition
\ref{prop:decay}.

\subsection{The integral is well-defined}
\label{subsec:well-definedness_integral} In this section, we prove
that the integral
\begin{equation} \label{eq:integral}
\int_{\mathcal{G}}\dot{E}^\lambda_\ell (z)d\lambda(\ell)
\end{equation}
is well-defined for all $z\in \partial \mathbb{D}$, when $\lambda\in
\mathcal{ML}_b(\mathbb{D})$.

\begin{remark}
Recall that when we fix $z\in \partial \mathbb{D}$,
$$
\mathcal{G}\ni \ell\mapsto \dot{E}^\lambda_\ell(z)
$$
is a function with the domain $\mathcal{G}$. Notice from the
definition that for $z\in \partial \mathbb{D}$,
$\dot{E}^\lambda_\ell(z)$ is independent of the measure $\lambda$,
depends only on the support $|\lambda|$ of $\lambda$. Hence we can
define $\dot{E}^\lambda_\ell(z)$ for any geodesic lamination
$\lambda$.
\end{remark}

\subsubsection{Support of the integral.}
Let $A$ be the fixed stratum which we used to define
$\dot{E}^\lambda_\ell(z)$ in \S\ref{sec:infinitesimal_earthquake}.
Let $\ell_A$ be the leaf of $\lambda$ contained in the closure of
$A$ which is closest to $z$. Let $z_0$ be a point of $\ell_A$.

Let $I$ be the geodesic connecting $z_0$ and $z$. If $z\in \partial
\mathbb{D}\cap \overline{A}$, $\dot{E}^{\lambda}_{\ell}(z)$ is
identically $0$ on $\mathcal{G}$. Hence the integral
\eqref{eq:integral} converges in this case. Hence we may assume that
$z$ is not in $\overline{A}$. This means that $I\cap A=\{z_0\}$ and
$I$ is not contained in any leaf of $\lambda$.

We define a measured lamination $\lambda_I$ as before. As above, we
denote by $|\lambda|_I$ the support of $\lambda_I$. Namely,
$|\lambda|_I=|\lambda_I|=|\lambda_{A,z}|$.

The following lemma is immediate from the definition of
$\dot{E}^{\lambda}_{\ell}(z)$.

\begin{lemma} \label{lem:support}
Suppose $\lambda$ is a geodesic lamination. Then, for $z\in \partial
\mathbb{D}$, the support of a function $\mathcal{G}\ni\ell\mapsto
\dot{E}^{\lambda}_{\ell}(z)$ is equal to $|\lambda|_I
=|\lambda_{A,z}|$.
\end{lemma}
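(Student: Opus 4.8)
The plan is to describe, leaf by leaf, the set $S:=\{\ell\in\mathcal{G}:\dot{E}^\lambda_\ell(z)\neq 0\}$ and then identify its closure, which is the support in question, with $|\lambda|_I=|\lambda_{A,z}|$. We may assume $z\notin\overline{A}$, since otherwise $\dot{E}^\lambda_\ell(z)\equiv 0$ and $|\lambda_{A,z}|=\emptyset$. The first step is a bookkeeping reformulation that follows directly from the definition of $\dot{E}^\lambda_\ell$ and the orientation convention for leaves: for a leaf $\ell$ of $\lambda$ with endpoints $a,b$, oriented as part of the boundary of the component of $\mathbb{D}\setminus\ell$ containing $A$, we have $\dot{E}^\lambda_\ell(z)=\frac{(z-a)(z-b)}{a-b}$, which is nonzero precisely when $z$ lies in the \emph{open} arc from $a$ to $b$ facing away from $A$. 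Thus $\ell\in S$ if and only if $\ell$ is a leaf of $\lambda$ that strictly separates $z$ from $A$ and $z$ is not an endpoint of $\ell$; and $\dot{E}^\lambda_\ell(z)=0$ for every $\ell$ that is not a leaf of $\lambda$.

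Granting this, I would first prove ${\rm supp}(\ell\mapsto\dot{E}^\lambda_\ell(z))\subseteq|\lambda|_I$. If $\ell\in S$, then $\ell$ separates $z$ from every point of $A$; since distinct leaves of $\lambda$ are disjoint and $z_0\in\ell_A$, either $\ell=\ell_A$, in which case $\ell_A$ meets the geodesic ray $I$ from $z_0$ to $z$ at $z_0$, or $\ell$ separates $z$ from $z_0\in\overline{A}$ and hence the complete geodesic $\ell$ crosses $I$. In either case $\ell\in|\lambda|_I$, so $S\subseteq|\lambda|_I$; since $|\lambda|_I$ is closed in $\mathcal{G}$, the closure of $S$ is contained in $|\lambda|_I$ as well.

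For the reverse inclusion, let $\ell_0$ be a leaf of $\lambda$ meeting $I$. If $\ell_0$ meets $I$ at a point different from $z_0$, then $\ell_0\neq\ell_A$ — a leaf through $z_0$ must be $\ell_A$, and $\ell_A$ cannot meet $I$ at a second point without containing $I$, which is excluded — so $\ell_0$ crosses $I$ transversally at a single interior point; consequently the component of $\mathbb{D}\setminus\ell_0$ containing $A$ is exactly the one not containing $z$, and $z$ is not an endpoint of $\ell_0$ (otherwise $\ell_0$ would be asymptotic to $I$ and could not cross it in $\mathbb{D}$), whence $\ell_0\in S$. Otherwise $\ell_0\cap I=\{z_0\}$, which forces $\ell_0=\ell_A$. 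If $A$ is a gap, then because $z\notin\overline{A}$ the point $z$ lies beyond a unique boundary leaf of $A$, namely $\ell_A$, so $z$ is in the open far arc of $\ell_A$ and $\ell_A\in S$. If $A$ is a leaf, then by hypothesis it is not an atom and hence not isolated in $\lambda$, so there is a sequence of leaves $\ell_n$ of $\lambda$, lying between $A$ and $z$, with $\ell_n\to\ell_A$; for large $n$ each $\ell_n$ strictly separates $z$ from $A$ with $z$ in its open far arc, so $\ell_n\in S$ and $\ell_A\in\overline{S}$. In every case $\ell_0\in\overline{S}$, so $|\lambda|_I\subseteq\overline{S}$, and combined with the previous paragraph we get $\overline{S}=|\lambda|_I$.

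The part I expect to be the main obstacle is not any single step but setting up the dictionary in the first paragraph correctly and then handling the degenerate configurations consistently: the distinguished leaf $\ell_A$ (which may or may not itself lie in $S$), leaves that share the ideal endpoint $z$ (which meet $I$ nowhere in $\mathbb{D}$ and contribute $0$, so never cause trouble), and the dichotomy between $A$ being a gap and $A$ being a non-atomic leaf. Once the translation of ``$\ell$ oriented toward $A$'' into ``$z$ in the open far arc of $\ell$'' is pinned down, every remaining assertion is an elementary planarity argument about geodesics in the disk.
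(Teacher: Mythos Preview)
The paper gives no proof of this lemma beyond the sentence ``The following lemma is immediate from the definition of $\dot{E}^{\lambda}_{\ell}(z)$.'' Your argument is a careful elaboration of exactly that, and your identification of the non-vanishing set $S$ with the set of leaves of $\lambda$ strictly separating $A$ from $z$ (with $z$ not an endpoint) is correct and is the heart of the matter.

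There is one point where both you and the paper are too quick. You pass from $S\subseteq|\lambda|_I$ to $\overline{S}\subseteq|\lambda|_I$ by invoking closedness of $|\lambda|_I$, which the paper asserts just before the lemma. But this can fail when $\lambda$ has a leaf $\ell^*$ with $z$ as an ideal endpoint: if leaves $\ell_n$ of $\lambda$ converge to $\ell^*$ from the $A$-side, then each $\ell_n$ separates $A$ from $z$ and hence crosses $I$, so $\ell_n\in S\subseteq|\lambda|_I$; yet $\ell^*$ is asymptotic to $I$ at $z$ and does not meet $I$ in $\mathbb{D}$, so $\ell^*\in\overline{S}\setminus|\lambda|_I$. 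Your parenthetical that leaves through $z$ ``never cause trouble'' is exactly where this trouble would sit. The edge case does not affect the downstream use of the lemma (the integral identity that follows), since $\dot{E}^\lambda_{\ell^*}(z)=\tilde{e}_z(\ell^*)=0$ on any such leaf. A smaller point: when $A$ is itself a non-atomic leaf, the approximating leaves $\ell_n\to A$ guaranteed by non-isolation need not lie on the $z$-side of $A$; the cleanest fix is to observe that, with the arbitrary orientation of $A$ chosen so that $z$ lies in the associated arc, one has $A\in S$ directly.
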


\subsubsection{A function $\tilde{e}_z$ on $\mathcal{G}$}
For $z\in \partial \mathbb{D}$,
we define a function $\tilde{e}_z$ on $\mathcal{G}$ as follows.
Let $\ell=\geodesic{a}{b}$.
We set
\begin{equation} \label{eq:extension_earthquake}
\tilde{e}_z(\ell):=\left\{
\begin{array}{cl}
\frac{(z-a)(z-b)}{a-b} & \mbox{$a\ne z$ and $b\ne z$} \\
0 & \mbox{otherwise},
\end{array}
\right.
\end{equation}
where in the first row of the right-hand side of
\eqref{eq:extension_earthquake}, $a$ and $b$ are chosen such that
the ordered triple $(a,z,b)$ lies on $\partial \mathbb{D}$
counterclockwise. For instance, in Figure
\ref{fig:geodesics_ell_elldash}, we have
$\tilde{e}_z(\ell)=\frac{(z-a)(z-b)}{a-b}$ and
$\tilde{e}_{z'}(\ell)=\frac{(z'-b)(z'-a)}{b-a}$.
\begin{figure}
\includegraphics[height=4cm]{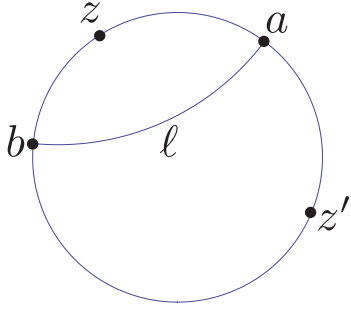}
\caption{Geodesics $\ell$ and $\ell'$.}
\label{fig:geodesics_ell_elldash}
\end{figure}
Notice that $\tilde{e}_z$ is well-defined and continuous on
$\mathcal{G}$. Since $\tilde{e}_z(\ell)=\dot{E}^\lambda_\ell (z)$ on
the support $|\lambda|_I$ of $\lambda_I$, by Lemma
\ref{lem:support}, we conclude the following.

\begin{lemma} \label{lem:measurable}
Let $\lambda$ be a measured lamination. Then, the function
$\mathcal{G}\ni\ell\mapsto \dot{E}^\lambda_\ell (z)$ is measurable
with respect to $\lambda$. Furthermore, for any $z\in \partial
\mathbb{D}$, if the geodesic ray $I$ above is not contained in any
leaf of $\lambda$, it holds
\begin{equation} \label{eq:integral2}
\int_{\mathcal{G}}\dot{E}^\lambda_\ell (z)d\lambda(\ell)=
\int_{\mathcal{G}}\tilde{e}_z(\ell)d\lambda_I(\ell)
=\int_{\mathcal{G}}\tilde{e}_z(\ell)d\lambda_{A,z}(\ell),
\end{equation}
if either the middle term or the right-hand side of \eqref{eq:integral2}
are defined.
\end{lemma}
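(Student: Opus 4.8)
The plan is to reduce both assertions to the fact, recorded just before the statement, that the function $\tilde{e}_z$ of \eqref{eq:extension_earthquake} is continuous on $\mathcal{G}$, combined with Lemma \ref{lem:support}, which identifies the support of $\ell\mapsto\dot{E}^\lambda_\ell(z)$ with the closed set $|\lambda|_I=|\lambda_{A,z}|$. The first step I would carry out is to check that, at every leaf $\ell=\geodesic{a}{b}$ of $\lambda$ lying in $|\lambda|_I$, the two expressions $\dot{E}^\lambda_\ell(z)$ and $\tilde{e}_z(\ell)$ agree, \emph{with the correct sign}. Such a leaf either separates the base stratum $A$ from $z$ or, when $A$ is a gap, is the leaf of $\partial A$ facing $z$; in either case $z$ lies on the boundary arc of the component of $\mathbb{D}\setminus\ell$ that does \emph{not} contain $A$. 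Unwinding the boundary-orientation convention used to define $[a,b]$ in \S\ref{sec:infinitesimal_earthquake}, that arc is precisely the arc from $a$ to $b$ along which the triple $(a,z,b)$ is counterclockwise. Hence $z\in[a,b]$ and both formulas equal $\frac{(z-a)(z-b)}{a-b}$. For every leaf not in $|\lambda|_I$ one has $z\notin[a,b]$, so $\dot{E}^\lambda_\ell(z)=0$, in agreement with the support statement.

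Measurability is then immediate: the restriction of $\ell\mapsto\dot{E}^\lambda_\ell(z)$ to the closed set $|\lambda|_I$ coincides with $\tilde{e}_z|_{|\lambda|_I}$, hence is continuous, hence Borel on $|\lambda|_I$; extended by $0$ over the open complement it remains Borel on all of $\mathcal{G}$, and since every locally finite Borel measure on $\mathcal{G}$ is Radon (\S\ref{subsec:bounded_measured_laminations_Holder}), it is in particular $\lambda$-measurable.

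For the integral identity I would write the chain
\begin{align*}
\int_{\mathcal{G}}\dot{E}^\lambda_\ell(z)\,d\lambda(\ell)
&=\int_{|\lambda|_I}\dot{E}^\lambda_\ell(z)\,d\lambda(\ell)
=\int_{|\lambda|_I}\tilde{e}_z(\ell)\,d\lambda(\ell) \\
&=\int_{\mathcal{G}}\tilde{e}_z(\ell)\,d\lambda_I(\ell),
\end{align*}
where the first equality is Lemma \ref{lem:support} (the integrand vanishes off $|\lambda|_I$), the second is the pointwise agreement established above, and the third is the very definition of $\lambda_I$ as the restriction of $\lambda$ to $|\lambda|_I$; finally $\lambda_I=\lambda_{A,z}$ by the notation of \S\ref{subsec:well-definedness_integral}, the hypothesis that $I$ is not contained in any leaf of $\lambda$ being used exactly so that $\lambda_I$ is well defined. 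Because the three integrands literally coincide on $|\lambda|_I$, any one of the three displayed integrals converges exactly when the others do, and then all are equal — which is the ``if either term is defined'' proviso. I expect the only delicate point to be the orientation bookkeeping in the first step, reconciling the boundary orientation that defines $[a,b]$ with the counterclockwise-triple convention for $\tilde{e}_z$, together with the harmless edge cases of the extreme leaf and of leaves having $z$ as an endpoint, where $\dot{E}^\lambda_\ell(z)=\tilde{e}_z(\ell)=0$ anyway so nothing is affected.
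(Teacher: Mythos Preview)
Your proposal is correct and follows essentially the same approach as the paper. The paper's argument is extremely terse---it consists of the single sentence preceding the lemma, ``Since $\tilde{e}_z(\ell)=\dot{E}^\lambda_\ell (z)$ on the support $|\lambda|_I$ of $\lambda_I$, by Lemma \ref{lem:support}, we conclude the following''---whereas you supply the orientation check and the integral chain explicitly, but the underlying logic is identical.
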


In particular,
the integral \eqref{eq:integral}
is represented as the integration of a continuous function
defined independently of $\lambda$,
but depending only on $z$.
Thus,
to check
the convergence of the integral \eqref{eq:integral},
we may prove the integrability of $\tilde{e}_z$
with respect to $\lambda_{A,z}$.

We now give properties of the function $\tilde{e}_z$. One can easily
see that
$$
\tilde{e}_{T(z)}(T(\ell))T'(z)^{-1}=\tilde{e}_z(\ell)
$$
for all $\ell\in \mathcal{G}$, $z\in \partial \mathbb{D}$ and $T\in
\mathrm{M\ddot{o}b}(\mathbb{D})$. Let $J$ be the radial geodesic ray
emanating from $0$ to $z\in
\partial \mathbb{D}$. Let $w_d$ ($d\ge 0$) be the length
parametrization of $J$ with $w_0=0$. The function $\tilde{e}_z$ has
the following property.

\begin{lemma} \label{lem:degeneration}
Let $z\in \partial \mathbb{D}$.
For $D_0>0$,
it holds
$$
|\tilde{e}_z(\ell)|\le (8\cosh(D_0))e^{-d}
$$
when $\ell$ intersects the $D_0$-neighborhood of $w_d$.
\end{lemma}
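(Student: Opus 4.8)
The plan is to use a M\"obius change of coordinates to move the geodesic $\ell$ close to the center $0$ of $\mathbb{D}$, where $\tilde{e}_z$ is easy to bound directly from its definition, and then to transport the bound back by the conjugation identity $\tilde{e}_{T(z)}(T(\ell))\,T'(z)^{-1}=\tilde{e}_z(\ell)$ recorded above.

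First I would let $T$ be the hyperbolic translation of $\mathbb{D}$ along the complete geodesic through $0$ and $z$ (the diameter from $-z$ to $z$, which contains the ray $J$) that sends $w_d$ to $w_0=0$; thus $T$ translates along that geodesic by hyperbolic distance $d$, away from $z$. Then $T(z)=z$, and since $T$ moves every point of the axis toward $-z$, the boundary point $z$ is the \emph{repelling} fixed point of $T$. As $T$ preserves $\mathbb{D}$, its derivative $T'(z)$ at the boundary fixed point is a positive real number, equal to the multiplier of $T$ there; for a translation of length $d$ the multiplier at the repelling fixed point is $e^{d}$, so $T'(z)=e^{d}$. Substituting $T(z)=z$ into the conjugation identity gives $|\tilde{e}_z(\ell)|=e^{-d}\,|\tilde{e}_z(T(\ell))|$. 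Moreover, $T$ is an isometry with $T(w_d)=0$, so if $\ell$ meets the $D_0$-ball about $w_d$ then $T(\ell)$ meets the $D_0$-ball about $0$, i.e.\ $d(0,T(\ell))\le D_0$. Hence the lemma reduces to the bound $|\tilde{e}_z(\ell')|\le 8\cosh(D_0)$ for every geodesic $\ell'$ with $d(0,\ell')\le D_0$.

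To prove this reduced bound I would write $\ell'=\geodesic{a'}{b'}$. If $a'=z$ or $b'=z$ then $\tilde{e}_z(\ell')=0$ by definition and there is nothing to check, so assume otherwise; then $|\tilde{e}_z(\ell')|=\frac{|z-a'|\,|z-b'|}{|a'-b'|}\le\frac{4}{|a'-b'|}$, since $\mathbb{D}$ has Euclidean diameter $2$. It remains to bound $|a'-b'|$ from below. Here I would use the elementary identity $\cosh\!\big(d(0,\geodesic{a'}{b'})\big)=2/|a'-b'|$ for $a',b'\in S^1$, which one checks by rotating about $0$ so that $a'=e^{i\theta}$ and $b'=e^{-i\theta}$ and computing the Euclidean distance from $0$ to the circle through $e^{\pm i\theta}$ orthogonal to $S^1$ (center $\sec\theta$, radius $\tan\theta$). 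Since $\cosh$ is increasing on $[0,\infty)$ and $d(0,\ell')\le D_0$, this gives $|a'-b'|\ge 2/\cosh(D_0)$, hence $|\tilde{e}_z(\ell')|\le 2\cosh(D_0)\le 8\cosh(D_0)$. Combining with $|\tilde{e}_z(\ell)|=e^{-d}|\tilde{e}_z(T(\ell))|$ we obtain $|\tilde{e}_z(\ell)|\le 8\cosh(D_0)\,e^{-d}$ (in fact with the sharper constant $2\cosh(D_0)$).

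I do not foresee a real obstacle here. The two points that need care are: identifying $T'(z)$ with the multiplier $e^{d}$ at the \emph{repelling} fixed point $z$ rather than $e^{-d}$ at the attracting fixed point, since the wrong choice would produce the wrong exponential rate; and carrying out the distance computation for $|a'-b'|$ explicitly enough that the constant depends only on $D_0$ and is uniform in $z$ and in $\ell'$.
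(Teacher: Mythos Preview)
Your proof is correct and follows essentially the same route as the paper: reduce via the conjugation identity $\tilde{e}_{T(z)}(T(\ell))T'(z)^{-1}=\tilde{e}_z(\ell)$ with a M\"obius $T$ sending $w_d$ to $0$ and fixing $z$, then bound $|\tilde{e}_z|$ for geodesics near the origin by $4/|a'-b'|$ and control $|a'-b'|$ from below. The only differences are cosmetic: the paper computes $|T'(z)|^{-1}=|1-\overline{w_d}z|^2/(1-|w_d|^2)=(1-|w_d|)/(1+|w_d|)=e^{-d}$ directly from the explicit formula $T(w)=(w-w_d)/(1-\overline{w_d}w)$, whereas you identify it as the reciprocal of the multiplier at the repelling fixed point; and your exact identity $\cosh(d(0,\ell'))=2/|a'-b'|$ yields the sharper constant $2\cosh(D_0)$ in place of the paper's $8\cosh(D_0)$.
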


\begin{proof}
Notice that the set $K_0\subset \mathcal{G}$ of all geodesics
intersecting the hyperbolic disk of center $0$ and radius $D_0$ is
compact. By a hyperbolic trigonometry formula, we have
$$
|\tilde{e}_z(\ell)|=|(z-a)(z-b)|/|a-b|\le 4/|a-b|\le 8\cosh(D_0)
$$
for all $\ell =\geodesic{a}{b}\in K_0$ and $z\in \partial
\mathbb{D}$.

Let $\ell$ be a geodesic which intersects the $D_0$-neighborhood of
$w_d$. Let $T$ be a M\"obius transformation acting on $\mathbb{D}$
with $T(w_{d})=0$ and fixing $z$. Since $w_{d}$ is on $J$,
$w_{d}=|w_{d}|z$. Since $T(\ell)\in K_0$, we obtain
\begin{align*}
|\tilde{e}_z(\ell)|
&=
|\tilde{e}_{T(z)}(T(\ell))|
|T'(z)|^{-1}
\le
(8\cosh(D_0))|1-\overline{w_{d}}z|^2/(1-|w_{d}|^2) \\
&=(8\cosh(D_0))\frac{1-|w_{d}|}{1+|w_d|} = (8\cosh(D_0))e^{-d},
\end{align*}
which implies what we wanted.
\end{proof}

\subsubsection{Proof that the integral is well-defined}

%
Recall that $A$ is the stratum which we fixed in the begining and
$z_0\in A$ is the initial point of $I$. Let $z_d$ ($d\ge 0$) be the
length parametrization of $I$. We set $I_d=\{z_k\mid k\ge d\}$. We
can define a measured lamination $\lambda_{I_d}$ as above. Notice
that if $|\lambda|_I$ contains no leaves which diverge in
$\mathcal{G}$, the support $|\lambda|_I$ of $\lambda_I$ is compact
and eventually $\lambda_{I_d}$ becomes the zero measure.

The integral \eqref{eq:integral} for bounded measured laminations
converges because of the following estimate.

\begin{proposition}[Rate of decay] \label{prop:decay}
Let $\lambda\in \mathcal{ML}_b(\mathbb{D})$ and $z\in \partial
\mathbb{D}$. Let $\ell_A$ be the leaf of $\lambda$ in $A$ facing
$z$. Let $z_0\in \ell_A$ and $I$ be the geodesic ray emanating from
$z_0$ and terminating at $z$ as above. Then, there is a constant
$C_2$ depending only on the hyperbolic distance between $0$ and
$z_0$ such that
\begin{equation} \label{eq:decay_1}
\int_{\mathcal{G}} |\tilde{e}_z(\ell)| d\lambda_{I_d}(\ell)\le
C_2\|\lambda\|_{Th}\cdot e^{-d}
\end{equation}
for $d\ge 0$.
\end{proposition}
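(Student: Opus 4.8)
The plan is to reduce the estimate to a decomposition of the transversal ray $I_d$ into unit-length subsegments, apply Lemma~\ref{lem:degeneration} on each one, and convert the resulting exponential decay along the radial ray $J$ into a convergent geometric series.

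\medskip
\noindent\textbf{Step 1 (geometric comparison of $I$ and $J$).}
First I would record that the transversal ray $I$ (from $z_0$ to $z$) and the radial ray $J$ (from $0$ to $z$) are asymptotic, since they share the ideal endpoint $z$. Writing $z_t$ for the arclength parametrization of $I$ with $z_0$ its initial point, the function $t\mapsto d(z_t,\widehat J)$, where $\widehat J$ is the complete geodesic containing $J$, is convex, nonnegative, and tends to $0$ as $t\to\infty$, hence nonincreasing; since $0\in\widehat J$ we get $d(z_t,\widehat J)\le d(z_0,\widehat J)\le d(0,z_0)=:D_0$. Moreover, for $t$ beyond a threshold depending only on $D_0$ the foot of the perpendicular from $z_t$ to $\widehat J$ lies on the ray $J$ itself, say at $w_{\rho(t)}$, and comparing horocyclic levels at $z$ gives $\rho(t)\ge t-D_0$. (Concretely, normalizing so that $z=\infty$ and $0\leftrightarrow i$ in the upper half-plane turns $I$ and $J$ into vertical rays, and one reads this off from $e^{-D_0}\le{\rm Im}\,z_0\le e^{D_0}$ together with the elementary formula for the distance of a point to a vertical geodesic.) Consequently any leaf $\ell$ of $\lambda$ meeting the unit subsegment $J_n:=[z_{d+n},z_{d+n+1}]$ of $I_d$ passes within distance $1+D_0$ of $w_{\rho(d+n)}$, so Lemma~\ref{lem:degeneration} yields
$$
|\tilde e_z(\ell)|\le 8\cosh(1+D_0)\,e^{-\rho(d+n)}\le 8\cosh(1+D_0)\,e^{D_0}\,e^{-(d+n)}
$$
whenever $d+n\ge D_0$. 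The finitely many indices $n$ with $d+n<D_0$ occur only when $d<D_0$; for these, $\ell$ meets the $(1+2D_0)$-neighborhood of $w_0=0$, so $|\tilde e_z(\ell)|\le 8\cosh(1+2D_0)$, and since there are at most $\lceil D_0\rceil$ such $n$ and $e^{-d}>e^{-D_0}$ in this range, their total contribution is absorbed into a term of the form $C_2\|\lambda\|_{Th}e^{-d}$.

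\medskip
\noindent\textbf{Step 2 (assembling the bound).}
Next I would use that every leaf is a geodesic and hence meets the geodesic $I$ in at most one point, so the sets $\{\ell:\ell\cap J_n\neq\emptyset\}$ cover the support of $\lambda_{I_d}$ with overlaps only along the countably many leaves through the division points $z_{d+n}$; therefore
$$
\int_{\mathcal G}|\tilde e_z(\ell)|\,d\lambda_{I_d}(\ell)\le\sum_{n\ge 0}\int_{\{\ell\,:\,\ell\cap J_n\neq\emptyset\}}|\tilde e_z(\ell)|\,d\lambda(\ell).
$$
On the $n$-th term the integrand is bounded as in Step 1, while the $\lambda$-mass of the leaves meeting the unit-length arc $J_n$ is at most $\|\lambda\|_{Th}$ by definition of the Thurston norm. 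Summing the geometric series $\sum_{n\ge 0}e^{-(d+n)}=e^{-d}/(1-e^{-1})$ and adding the bounded initial block gives \eqref{eq:decay_1} with $C_2$ an explicit function of $D_0$ alone, hence of $d(0,z_0)$ alone. (If $\lambda_{I_d}$ is the zero measure, e.g.\ once $d$ exceeds the extent of $|\lambda|_I$, the estimate is trivial.)

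\medskip
\noindent\textbf{Main obstacle.}
The substantive point is Step 1: making precise, with constants controlled solely by $d(0,z_0)$, that advancing one unit of hyperbolic length along the transversal ray $I$ advances at least one unit (up to a bounded additive error) along the radial ray $J$, while $z_t$ remains at bounded distance from $J$. This is the standard asymptotics of geodesic rays with a common ideal point, but care is needed near the start of $I$, where the nearest point of $J$ may be its endpoint $0$ rather than a point far along $J$ — which is exactly why the short initial block of indices is handled separately.
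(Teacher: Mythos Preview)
Your proof is correct and follows essentially the same route as the paper: decompose $I_d$ into unit-length subarcs, compare the transversal ray $I$ with the radial ray $J$ so that Lemma~\ref{lem:degeneration} applies on each subarc, bound the $\lambda$-mass of each subarc by $\|\lambda\|_{Th}$, and sum the resulting geometric series. The paper avoids your separate handling of the initial block by using the slightly more direct estimate $d_{\mathbb{D}}(w_{d'},z_{d'})\le d_{\mathbb{D}}(w_0,z_0)=D_0$ (same arclength parameter on both asymptotic rays, nonincreasing by convexity), so that the exponent $d+n-D_0$ is allowed to be negative and is simply absorbed into the constant $C_1=8e^{D_0}\cosh(D_0+1)$.
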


\begin{proof}
When $z$ is in the closure of $A$, the interval $I$ is contained in
$A$. Hence $\lambda_I$ is the zero measure, and \eqref{eq:decay_1}
holds for all $d\ge 0$. In this case $\dot{E}_{\ell}^\lambda(z)$ is
identically zero on $\mathcal{G}$. Therefore, the integral in
\eqref{eq:integral} converges and equals to zero (and the equation
\eqref{eq:integral2} also holds). Hence we may assume that $z\in
\partial \mathbb{D}\setminus \overline{A}$. This assumption means
that $I$ transversely intersects some leaves of $\lambda$ in
$\mathbb{D}$. However, note that $z$ may be an endpoint of some leaf
of $\lambda$.

Let $\{I_{n,d}\}_{n=0}^\infty$ be a sequence of consecutive
subintervals of $I_d$
such that $z_d\in I_{0,d}$
and
$I_{n,d}\cap I_{n+1,d}=\{z_{d+n}\}$.
Notice that each $I_{n,d}$ has unit length.
We define a measured sublamination $\lambda_{I_{n,d}}$ of $\lambda_I$
as above.
When there is no leaf of $\lambda$ intersecting $I_{n,d}$,
we define $\lambda_{I_{n,d}}$ to be the zero measure as we noted
before.

As in Lemma \ref{lem:degeneration}, we denote by $J$ the radial
geodesic ray emanating from $0$ to $z$, and $w_d$ ($d\ge 0$) the
length parametrization of $J$ with $w_0=0$. Let $\ell$ be a leaf of
$\lambda_{I_{n,d}}$ and $\{z_{d'}\}=\ell\cap I_{n,d}$. Then, by the
triangle inequality, we have $d_{\mathbb{D}}(0,z_{d'})\ge n+d-D_0$.
Since $J$ shares the endpoint $z$ with $I$,
$d_\mathbb{D}(w_{d'},z_{d'})\le d_\mathbb{D}(z_0,w_0)=D_0$, which
means that any leaf of $\lambda_{I_{n,d}}$ intersects the
$D_0+1$-neighborhood of $w_{d'}$. By Lemma \ref{lem:degeneration},
we have
$$
|\tilde{e}_z(\ell)| \le (8\cosh(D_0+1))e^{-d_{\mathbb{D}}(0,z_{d'})}
\le (8\cosh(D_0+1))e^{-(d+n-D_0)}=C_1e^{-(d+n)},
$$
where $C_1=8e^{D_0}\cosh(D_0+1)$.
%
Therefore,
we get
\begin{align*}
\int_{\mathcal{G}}
|\tilde{e}_z(\ell)|d\lambda_{I_{n,d}}(\ell)
&\le C_1e^{-(d+n)}\lambda_{I_{n,d}}(\mathcal{G})
=
C_1e^{-(d+n)}\lambda_{I_{n,d}}(I_{n,d}) \\
&\le C_1\|\lambda\|_{Th}\,e^{-d}\cdot e^{-n},
\end{align*}
since each $I_{n,d}$ has unit length
and the support of $\lambda_{I_{n,d}}$ is contained in $I_{n,d}$.
Thus,
we conclude
$$
\int_{\mathcal{G}}
|\tilde{e}_z(\ell)|
d\lambda_{I_{d}}(\ell)
\le \sum_{n=0}^\infty
\int_{\mathcal{G}}
|\tilde{e}_z(\ell)|
d\lambda_{I_{n,d}}(\ell)
\le C_2\|\lambda\|_{Th}e^{-d},
$$
where $C_2=(1-e^{-1})C_1$.
\end{proof}

\subsection{Weak* convergence and Pointwise convergence}
\label{subsec:weakconv_pointwise} In this section, we prove the
continuity of the integral \eqref{eq:integral_lamination} on
$\mathcal{ML}_b(\mathbb{D})$ with respect to the weak* topology.

\begin{proposition}[Pointwise convergence] \label{thm:pointwise_convergence}
Fix $\alpha$ with $0\le \alpha<1$. Let $\{\lambda_n\}_{n=1}^\infty$
be a sequence of measured laminations which converges in the weak*
topology to a measured lamination $\lambda\in
\mathcal{ML}_b(\mathbb{D})$. If the Thurston norms of the sequence
$\{\lambda_n\}_{n=1}^\infty$ of measured laminations are uniformly
bounded, then there is a choice of normalizations for
$\dot{E}^{\lambda}_{\ell}$ and $\dot{E}^{\lambda_n}_{\ell}$ such
that
$$
\lim_{n\to \infty} \int_{\mathcal{G}} \dot{E}^{\lambda_n}_{\ell}
(z)d\lambda_n(\ell ) = \int_{\mathcal{G}} \dot{E}^{\lambda}_{\ell}
(z)d\lambda (\ell )
$$
for all $z\in \partial\mathbb{D}=S^1$.
\end{proposition}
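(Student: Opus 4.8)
The plan is to reduce, for each fixed $z\in\partial\mathbb{D}$, the integral $\int_{\mathcal{G}}\dot{E}^{\lambda_n}_{\ell}(z)\,d\lambda_n(\ell)$ to the pairing of the single \emph{fixed} continuous function $\tilde{e}_z$ of \eqref{eq:extension_earthquake} against $\lambda_n$, and then to pass to the limit using weak* convergence together with the uniform exponential decay of Proposition \ref{prop:decay}. First I would fix the normalization once and for all: choose a base point $z_0\in\mathbb{D}$ lying on no atomic leaf of $\lambda$ and on no atomic leaf of any $\lambda_n$, which is possible since these atoms form a countable family of geodesics whose union has measure zero in $\mathbb{D}$. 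For $\mu\in\{\lambda\}\cup\{\lambda_n\}$ let $A(\mu)$ be the stratum of $\mu$ containing $z_0$, and normalize $E^{\mu}$, hence $\dot{E}^{\mu}_{\ell}$, to be the identity on $A(\mu)$; this is the required common choice of normalizations. Fix $z$; if $z\in\overline{A(\mu)}$ then $\dot{E}^{\mu}(z)=0$, so we may assume $z\notin\overline{A(\lambda)}$. Let $I$ be the geodesic ray from $z_0$ to $z$, parametrized by arclength as $z_{(d)}$ ($d\ge 0$, $z_{(0)}=z_0$). By Lemma \ref{lem:measurable},
\[
\dot{E}^{\mu}(z)=\int_{\mathcal{G}}\tilde{e}_z(\ell)\,d\mu_{A(\mu),z}(\ell)=\int_{\mathcal{G}}\tilde{e}_z(\ell)\,\mathbf{1}\{\ell\cap I\neq\emptyset\}\,d\mu(\ell)
\]
whenever $z\notin\overline{A(\mu)}$, the second equality using that, since $z_0$ avoids the atoms of $\mu$, the ``boundary leaf of $A(\mu)$'' and the degenerate configurations (on which $\tilde{e}_z$ vanishes) contribute nothing; and when $z\in\overline{A(\mu)}$ both sides vanish.

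Next I would truncate using Proposition \ref{prop:decay}. For $d>0$ write $I=[z_0,z_{(d)}]\cup I_d$ with $I_d=\{z_{(k)}:k\ge d\}$; since a complete geodesic meets the ray $I$ in at most one point, this splits the integral as
\[
\dot{E}^{\mu}(z)=\int_{\mathcal{G}}\tilde{e}_z(\ell)\,\mathbf{1}\{\ell\cap[z_0,z_{(d)}]\neq\emptyset\}\,d\mu(\ell)+\int_{\mathcal{G}}\tilde{e}_z(\ell)\,d\mu_{I_d}(\ell),
\]
and by Proposition \ref{prop:decay} the last term is at most $C_2\|\mu\|_{Th}\,e^{-d}$ in absolute value. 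Since $\sup_n\|\lambda_n\|_{Th}\le C$ and $\|\lambda\|_{Th}<\infty$, given $\epsilon>0$ I would fix $d$ large --- and generic, so that $z_{(d)}$ too lies on no atom of $\lambda$ --- with $C_2\max(C,\|\lambda\|_{Th})\,e^{-d}<\epsilon/3$. It then suffices to prove $\int_{\mathcal{G}}f\,d\lambda_n\to\int_{\mathcal{G}}f\,d\lambda$ for the function $f(\ell):=\tilde{e}_z(\ell)\,\mathbf{1}\{\ell\cap[z_0,z_{(d)}]\neq\emptyset\}$, which is bounded with compact support because the set of complete geodesics meeting the compact segment $[z_0,z_{(d)}]$ is a compact subset of $\mathcal{G}$, as in Lemma \ref{lem:degeneration}.

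Finally I would pass to the limit on this compact piece by a portmanteau argument. The discontinuity set of $f$ is contained in $\{\ell:z_0\in\ell\}\cup\{\ell:z_{(d)}\in\ell\}$: at any geodesic having $z$ as an endpoint --- in particular the complete geodesic through $z_0$ and $z$, where the indicator itself is discontinuous --- the factor $\tilde{e}_z$ vanishes, so $f$ is continuous there. Since $z_0$ and $z_{(d)}$ avoid every atom of $\lambda$, and a measured lamination has at most one leaf through a given point of $\mathbb{D}$, $\lambda$ assigns zero mass to this discontinuity set. As $f$ is bounded with compact support and $\lambda_n\to\lambda$ in the weak* topology, sandwiching $f$ between continuous compactly supported functions that differ only on an arbitrarily small neighborhood of the compact set $\mathrm{disc}(f)$ gives $\int_{\mathcal{G}}f\,d\lambda_n\to\int_{\mathcal{G}}f\,d\lambda$; combined with the two tail bounds this yields $|\dot{E}^{\lambda_n}(z)-\dot{E}^{\lambda}(z)|<\epsilon$ for all large $n$, and since $z_0$ was chosen independently of $z$ the same normalization works for every $z$. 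I expect the main obstacle to be exactly this last step: $\tilde{e}_z$ is not compactly supported, and after truncating the integrand carries the indicator of ``leaves crossing a segment'', which is discontinuous --- so one needs both the uniform decay of Proposition \ref{prop:decay} and the generic choice of $z_0$ (and of $d$) to make the relevant discontinuity set $\lambda$-null.
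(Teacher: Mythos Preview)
Your proposal is correct and follows essentially the same route as the paper's proof. Both fix a common base point $z_0$ avoiding atoms, rewrite the integral via the continuous function $\tilde{e}_z$ restricted to leaves crossing the ray $I$ from $z_0$ to $z$, truncate using the exponential decay of Proposition~\ref{prop:decay}, and handle the remaining compact piece by approximating the indicator of ``leaves crossing $[z_0,z_{(d)}]$'' by continuous compactly supported functions; the paper does this last step by building explicit bump functions $\varphi_i$ supported on geodesics crossing slightly longer arcs $(z_l^i,z_r^i)\supset[z_0,z_d]$, which is exactly your sandwich/portmanteau argument written out by hand.
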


\begin{proof} The proof follows the same outline as the proof of
\cite[Lemma 3.2]{Saric2}. We first fix the normalizations of
$\dot{E}^{\lambda}_{\ell }$ and $\dot{E}^{\lambda_n}_{\ell}$. Let
$A$ be a fixed stratum of $\lambda$ which is either a gap of
$\lambda$ or a leaf of $\lambda$ whose $\lambda$-measure is zero
(i.e. $A$ is not an atom of $\lambda$). Let $z_0\in A$ be a point in
the interior of $A$ if it is a gap, or any point of $A$ if it is a
leaf of $\lambda$. Let $A_n$ be the stratum of $\lambda_n$ which
contains $z_0$. We orient each $\ell\in |\lambda |$ to the left as
seen from $A$. If $A$ is a geodesic, then we orient $A$ arbitrary.
This gives a well-defined function $\dot{E}^{\lambda}_{\ell }$ for
$\ell\in |\lambda |$ which in turn implies
$$
\int_{\mathcal{G}}\dot{E}^{\lambda}_{\ell}(z)d\lambda (\ell
)=\int_{\mathcal{G}}\tilde{e}(\ell )d\lambda_{A,z}(\ell ).
$$
We define $\dot{E}^{\lambda_n}_{\ell}$ by giving the left
orientation to each $\ell$ with respect to the stratum $A_n$ in the
same fashion.

Let $I$ be a geodesic ray from $z_0$ to $z$ and let $z_d\in I$ be
such that the distance between $z_0$ and $z_d$ is $d\geq 0$. We fix
$d>0$ such that $z_d$ is contained in a stratum $A_{d}$ of $|\lambda
|$ which is either a gap or a leaf which is not an atom of
$\lambda$.

Given $i\in\mathbb{N}$, let $I_i=(z_l^i,z_r^i)$ be an open geodesic
arc whose endpoints are on the distance $1/i$ from $z_0$ and $z_d$,
and which contains $z_0,z_d$. The set of geodesics of $\mathbb{D}$
which intersect $I_i$ is open in $\mathcal{G}$ and contains all
geodesics of $|\lambda |$ which intersect the closed geodesic arc
with endpoints $z_0$ and $z_d$. Since the lengths of $(z_l^i,z_0)$
and $(z_d,z_r^i)$ are going to zero as $i\to\infty$, it follows that
the $\lambda$-measure of the set of geodesics intersecting
$(z_l^i,z_0)$ and $(z_d,z_r^i)$ is going to zero as $i\to\infty$ by
the choice of $z_0$ and $z_d$ (namely, $A$ and $A_{z_d}$ are either
gaps or non-atomic leaves). Let $\varphi_i:\mathcal{G}\to\mathbb{R}$
be a non-negative continuous function whose support consists of
geodesics intersecting $I_i=(z_l^i,z_r^i)$ and which is identically
equal to $1$ on the set of geodesics intersecting $[z_0,z_d]$. Then
the function $\ell\mapsto \varphi_i(\ell )\tilde{e}_{\ell}(z)$ is a
continuous function on $\mathcal{G}$ with compact support. It
follows that
$$
\int_{\mathcal{G}}\varphi_i(\ell )\tilde{e}_{\ell}(z)d\lambda_n(\ell
)\to \int_{\mathcal{G}}\varphi_i(\ell )\tilde{e}_{\ell}(z)d\lambda
(\ell )
$$
as $n\to\infty$ by the weak* convergence $\lambda_n\to\lambda$.

Note that
$$\int_{\mathcal{G}}\varphi_i(\ell
)\tilde{e}_{\ell}(z)d\lambda_n(\ell )\leq
\int_{\mathcal{G}}|\tilde{e}_{\ell}(z)|d[(\lambda_n)_{(z_l^i,z_0)}+(\lambda_n)_{(z_d,z_r^i)}](\ell
)+\int_{\mathcal{G}}\tilde{e}_{\ell}(z)d(\lambda_n)_{(z_0,z_d)}(\ell
)
$$
and
$$\int_{\mathcal{G}}\varphi_i(\ell
)\tilde{e}_{\ell}(z)d\lambda (\ell )\leq
\int_{\mathcal{G}}|\tilde{e}_{\ell}(z)|d[\lambda_{(z_l^i,z_0)}+\lambda_{(z_d,z_r^i)}](\ell
)+\int_{\mathcal{G}}\tilde{e}_{\ell}(z)d\lambda_{(z_0,z_d)}(\ell ).
$$

The choice of $z_0$ and $z_d$ is such that the total masses of
$\lambda_{(z_l^i,z_0)}$ and $\lambda_{(z_d,z_r^i)}$ on $\mathcal{G}$
converge to zero as $i\to\infty$. Since $\lambda_n$ converges to
$\lambda$ in the weak* sense, it follows that given $\epsilon >0$
there exist $i_0,n_0\in\mathbb{N}$ such that the total masses of
$\lambda_{(z_l^i,z_0)}$, $\lambda_{(z_d,z_r^i)}$,
$(\lambda_n)_{(z_l^i,z_0)}$ and $(\lambda_n)_{(z_d,z_r^i)}$ on
$\mathcal{G}$ are less than $\epsilon$ for $i\geq i_0$ and $n\geq
n_0$. The above three inequalities imply that
$$
\int_{\mathcal{G}}\tilde{e}_{\ell}(z)d(\lambda_n)_{(z_0,z_d)}(\ell
)\to \int_{\mathcal{G}}\tilde{e}_{\ell}(z)d\lambda_{(z_0,z_d)}(\ell
)
$$
as $n\to\infty$.

Since
$|\int_{\mathcal{G}}\tilde{e}_{\ell}(z)d(\lambda_n)_{(z_0,z_d)}(\ell
)-\int_{\mathcal{G}}\tilde{e}_{\ell}(z)d\lambda_n(\ell )|\leq
Ce^{-d}$ and
$|\int_{\mathcal{G}}\tilde{e}_{\ell}(z)d\lambda_{(z_0,z_d)}(\ell
)-\int_{\mathcal{G}}\tilde{e}_{\ell}(z)d\lambda (\ell )|\leq
Ce^{-d}$, the conclusion follows.
\end{proof}

\subsection{Differentiation of earthquake paths}
\label{subsec:differential_formula}
In this section,
we reprove the formula \eqref{eq:equation_E_lambda}.

\subsubsection{Holomorphic motions and Complex earthquakes}
\label{subsubsec:holmorphic_motion_complex_earthquake} Let $S$ be a
subset of $\hat{\mathbb{C}}$ and let $D$ be a domain in
$\hat{\mathbb{C}}$. A \emph{holomorphic motion of $S$ over $D$ with
base point $t_0\in D$} is, by definition, a map $h:S\times D\to
\hat{\mathbb{C}}$ satisfying the following three properties:
\begin{itemize}
\item[(1)]
$h(x,t_0)=x$ for all $x\in S$.
\item[(2)]
For all $t\in D$,
$h_t(\cdot):=h(\cdot, t)$ is injective on $S$.
\item[(3)]
For all $s\in S$, $h(s,\cdot ):D\to \hat{\mathbb{C}}$ is holomorphic.
\end{itemize}
By Slodkowski's theorem (\cite{Slod}),
if $D$ is conformally equivalent to the unit disk,
any holomorphic motion $h$ of $S$ over $D$
with base point $t_0\in D$
extends to a holomorphic motion $\tilde{h}$
of $\hat{\mathbb{C}}$ over $D$
and for each $t\in D$,
$\tilde{h}_t$ is $K_t$-quasiconformal mapping
where $K_t=\exp(d_D(t_0,t))$
and $d_D$ is the Poincar\'e distance on $D$
 normalized such that it has curvature $-1$.

The following theorem is proved in \cite{Saric4}.

\begin{theorem}[Theorem 2 in \cite{Saric4}] \label{thm:holomorphic_motion_saric}
Let $\lambda\in ML_b(\mathbb{D})$. The earthquake map $ (z,t)\mapsto
E^{t\lambda}(z)$ for $t>0$ and $z\in \partial\mathbb{D}$ extends to
a holomorphic motion $ (z,\tau)\mapsto E^{\tau\lambda}(z)$ of
$\partial \mathbb{D}$ over a neighborhood $S_\lambda$ of
$\mathbb{R}$ in $\mathbb{C}$ with base point $\tau=0$.
\end{theorem}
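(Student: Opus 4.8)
The plan is to realize $E^{\tau\lambda}$ for complex $\tau$ as a \emph{complex earthquake} (quakebend) and to check by hand the three axioms of a holomorphic motion of $S^1=\partial\mathbb{D}$ over a neighborhood $S_\lambda$ of $\mathbb{R}$ with base point $\tau=0$. Fix a base stratum $A$ of $\lambda$, set $E^{\tau\lambda}|_A=\mathrm{id}$, and for any stratum $B$ let $E^{\tau\lambda}|_B$ be the Möbius transformation obtained by ``integrating $\exp(\tau\,d\lambda)$'' along a transversal from $A$ to $B$: for a finite lamination $\lambda=\sum_i m_i\delta_{\ell_i}$ it is the ordered composition of the loxodromic maps $M_{\ell_i}^{\tau m_i}$ (axis $\ell_i$, complex translation length $\tau m_i$) over the leaves separating $A$ from $B$, and in general it is the corresponding Möbius-valued product integral. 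For $\tau=t\in\mathbb{R}$ this is the real earthquake, and $E^{\tau\lambda}(z)$ for $z\in\partial\mathbb{D}$ is defined by letting $B$ run through the strata approaching $z$. It remains to show: (i) the cocycle converges for $\tau$ in a neighborhood $S_\lambda$ of $\mathbb{R}$ whose shape depends only on $\|\lambda\|_{Th}$; (ii) $\tau\mapsto E^{\tau\lambda}(z)$ is holomorphic on $S_\lambda$ for fixed $z$; (iii) $E^{0\cdot\lambda}=\mathrm{id}$; (iv) $E^{\tau\lambda}|_{S^1}$ is injective for each $\tau\in S_\lambda$.

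For (i) and (ii) I would first treat finite laminations. By Thurston's finite-earthquake approximation \cite{Thurston} (see also \cite{GHL}) choose finite measured laminations $\lambda_m$ with $\|\lambda_m\|_{Th}$ uniformly bounded, $\lambda_m\to\lambda$ in the weak* topology, and $E^{t\lambda_m}|_{S^1}\to E^{t\lambda}|_{S^1}$ for each $t\in\mathbb{R}$ (the last e.g.\ by Lemma 3.2 of \cite{Saric2}). For finite $\lambda_m$ the map $E^{\tau\lambda_m}|_{S^1}$ is a finite composition of the entire $\mathrm{PSL}_2(\mathbb{C})$-valued functions $\tau\mapsto M_{\ell_i}^{\tau m_i}$, hence holomorphic in $\tau\in\mathbb{C}$. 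The key estimate is this: computing $E^{\tau\lambda_m}(z)$ as a limit along the geodesic ray $I$ from $A$ to $z$ and conjugating so as to measure displacements near $z$, the displacement produced by the leaves of $\lambda_m$ crossing the unit subarc of $I$ at distance $k$ from the basepoint is, by the exponential decay of $\tilde e_z$ (Lemma \ref{lem:degeneration}) together with $\lambda_m(\text{unit arc})\le\|\lambda_m\|_{Th}$, at most $C(\tau,\|\lambda\|_{Th})\,e^{-k}$ up to higher-order terms; summing the geometric series yields uniform convergence of the cocycle on $S^1$ times any compact subset of $S_\lambda$, where $S_\lambda$ is the locus where the construction closes up. One verifies that $S_\lambda$ is a neighborhood of $\mathbb{R}$ with shape controlled by $\|\lambda\|_{Th}$ (it may pinch as $|\mathrm{Re}\,\tau|\to\infty$, which is still a neighborhood of $\mathbb{R}$). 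Setting $E^{\tau\lambda}(z):=\lim_m E^{\tau\lambda_m}(z)$, Morera's theorem gives holomorphy in $\tau$, independence of the approximating sequence, and agreement with the real earthquake on $\mathbb{R}$; and (iii) is immediate since every $M_\ell^0=\mathrm{id}$.

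The main obstacle is (iv): injectivity of $E^{\tau\lambda}|_{S^1}$ for non-real $\tau$ does not follow formally, and I would obtain it by propagating a cross-ratio bound off $\mathbb{R}$. For $t$ in a bounded interval $I_0\subset\mathbb{R}$, $E^{t\lambda}|_{S^1}$ is quasisymmetric with a constant depending only on $I_0$ and $\|\lambda\|_{Th}$ (\cite{GHL}, \cite{Hu1}); in particular $\bigl|L\bigl(E^{t\lambda}(Q)\bigr)\bigr|\le C_0$ for all such $t$ and all boxes $Q=[a,b]\times[c,d]$ with $L(Q)=\log 2$, where $L(h(Q)):=\bigl|\log\bigl|cr(h(a),h(b),h(c),h(d))\bigr|\bigr|$. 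Now $\tau\mapsto cr\bigl(E^{\tau\lambda}(a),E^{\tau\lambda}(b),E^{\tau\lambda}(c),E^{\tau\lambda}(d)\bigr)$ is holomorphic, and — after pre-composing by $\gamma_Q$ so that $Q$ becomes the fixed box $Q^*$, which replaces $\lambda$ by $\gamma_Q^*\lambda$ of the \emph{same} Thurston norm — the estimate above bounds its variation in $\tau$ \emph{uniformly over all boxes} $Q$ with $L(Q)=\log 2$. Hence, after shrinking $S_\lambda$ (keeping it a $\|\lambda\|_{Th}$-controlled neighborhood of $\mathbb{R}$), one has $\bigl|L\bigl(E^{\tau\lambda}(Q)\bigr)\bigr|\le 2C_0$ uniformly over such $Q$ and $\tau$. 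If $E^{\tau\lambda}(z_1)=E^{\tau\lambda}(z_2)$ with $z_1\ne z_2$, pick $z_3,z_4$ with $z_1,z_3,z_2,z_4$ in cyclic order so that $Q=[z_1,z_3]\times[z_2,z_4]$ satisfies $L(Q)=\log 2$; then two corners of $E^{\tau\lambda}(Q)$ coincide, so $cr=0$ and $L(E^{\tau\lambda}(Q))=\infty$, contradicting the bound. Thus $E^{\tau\lambda}|_{S^1}$ is injective and $(z,\tau)\mapsto E^{\tau\lambda}(z)$ is the required holomorphic motion.

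The genuinely hard part is the quantitative bookkeeping behind (i) and the ``uniformly over all boxes'' claim in (iv): one must control an infinite composition of loxodromic Möbius transformations \emph{uniformly in the base point $z\in S^1$} and \emph{uniformly over all Möbius translates $\gamma^*\lambda$ of $\lambda$}, so that $S_\lambda$ depends only on $\|\lambda\|_{Th}$. The two inputs that make this possible are the Möbius equivariance $\|\gamma^*\lambda\|_{Th}=\|\lambda\|_{Th}$ and the exponential decay of $\tilde e_z$ along a transversal (Lemma \ref{lem:degeneration}, the mechanism behind the rate-of-decay estimate of Proposition \ref{prop:decay}). Granting these, (ii) is soft (Morera), (iii) is trivial, and the base case of (iv) is the already-available theory of real bounded earthquakes.
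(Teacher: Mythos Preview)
This paper does not prove the theorem you are addressing: it is quoted as ``Theorem~2 in \cite{Saric4}'' and used as a black box (the only follow-up in the text is the explicit description \eqref{eq:domain_S_lambda} of $S_\lambda$ and its use in Lemma~\ref{lem:comparizon} and in the proof of Proposition~\ref{prop:convergence_of_integral}). There is therefore no proof here to compare your proposal against; for that you would have to consult \cite{Saric4} directly.

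As a standalone sketch your outline is plausible but has two soft spots. In~(i), the exponential decay you invoke (Lemma~\ref{lem:degeneration}, Proposition~\ref{prop:decay}) bounds the \emph{linearized} displacement $\tilde e_z$, i.e.\ the $\tau$-derivative at $\tau=0$; controlling the full nonlinear product $\prod_i M_{\ell_i}^{\tau m_i}$ for $\tau$ in a fixed strip requires a genuinely different estimate (a multiplicative matrix-norm bound on compositions of loxodromics), and the phrase ``up to higher-order terms'' hides exactly this step. In~(iv), the sentence ``the estimate above bounds its variation in $\tau$ uniformly over all boxes'' is the heart of the matter and is not established: from a bound on $|L(E^{t\lambda}(Q))|$ on the real axis alone you cannot conclude a bound on a complex strip without a uniform-in-$Q$ bound on the $\tau$-derivative of the cross-ratio, or a normal-family argument that is itself uniform in $Q$. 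That uniformity is precisely what~(i), carried out correctly and uniformly over all $\gamma_Q^*\lambda$ with $\|\gamma_Q^*\lambda\|_{Th}=\|\lambda\|_{Th}$, would have to supply; as written, (i) and (iv) each defer the hard estimate to the other.
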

The domain $S_\lambda$ in the theorem above is
concretely defined by
\begin{equation} \label{eq:domain_S_lambda}
S_\lambda=\{\tau=t+is\mid
|s|<\epsilon_0/[C_0\exp(\|t\lambda\|_{Th})\|\lambda\|_{Th}]\},
\end{equation}
where $\epsilon_0$ and $C_0$ are independent of $\lambda$.




\begin{proof}[Proof of Proposition \ref{prop:convergence_of_integral}]
We first show the convergence in the case when
$\{\lambda_n\}_{n=1}^\infty$ is a finite approximation of $\lambda$.
From the proof of Theorem 2 in \cite{Saric4}, we know that there is
a neighborhood $V_0$ of $\partial \mathbb{D}$ such that the
complement of $V_0$ contains at least $3$ points and
$E^{\tau\lambda_n}(z)\in V_0$ for all $\tau\in S_\lambda$, $z\in
\partial \mathbb{D}$ and $n\in \mathbb{N}$, where we assume in the
definition that the restriction of $E^{t\lambda_n}$ is the identity
on a stratum of $\lambda_n$ containing $A$. This implies that
$\{E^{\tau\lambda_n}(z)\}_{\tau\in S_\lambda}$ is normal family and
converges to $E^{\tau\lambda}(z)$ on any compact set of $S_\lambda$.
From the Weierstrass' theorem, we have
$$
\left.\frac{d}{d \tau}E^{\tau\lambda}(z) \right|_{\tau=0} =
\lim_{n\to 0}\left.\frac{d}{d \tau}E^{\tau\lambda_n}(z)
\right|_{\tau=0}.
$$
On the other hand, by Theorem \ref{thm:pointwise_convergence}, the
integral in \eqref{eq:integral_lamination} varies continuously on
$\mathcal{ML}_b(\mathbb{D})$. Hence, we get the formula
(\ref{eq:equation_E_lambda}).
\end{proof}


\begin{thebibliography}{99}

\bibitem{Bonahon2}
F. Bonahon, Geodesic currents and Teichm\"uller space, Invent. Math.
92 (1988), no. 1, 139--162.

\bibitem{Bonahon1}
F. Bonahon, Geodesic laminations with transverse H\"older
distributions, Ann. Sci. \'Ecole Norm. Sup. (4) 30 (1997), no. 2,
205--240.

\bibitem{DE}
A. Douady and C.Earle, Conformally natural extension of
homeomorphisms of the circle, Acta Math. 157 (1986), no. 1-2,
23--48.

\bibitem{EM}
D. Epstein and A.Marden, Convex hulls in Hyperbolic Space, a Theorem
of Sullivan, and Measured Pleated Surfaces, \emph{Analytical and
geometric aspects of hyperbolic space}, 113--253, London Math. Soc.
Lecture Note Ser., 111, Cambridge Univ. Press, Cambridge (1987).

\bibitem{gardiner}
F.Gardiner, Infinitesimal bending and twisting in one-dimensional
dynamics, Trans. Amer. Math. Soc. 347 (1995), no. 3, 915--937.

\bibitem{GL}
F.Gardiner and N.Lakic, Quasiconformal Teichm\"uller theory,
 Mathematical Surveys and
Monographs, {\bf 76}. American Mathematical Society, Providence, RI
(2000).

\bibitem{GHL}
F. Gardiner, J. Hu, and N. Lakic, Earthquake curves, Complex
manifolds and hyperbolic geometry (Guanajuato, 2001), 141--195,
Contemp. Math., 311, Amer. Math. Soc., Providence, RI, 2002.

\bibitem{Hu1}
J. Hu, Earthquake measure and cross-ratio distortion, In the
tradition of Ahlfors and Bers, III, 285--308, Contemp. Math., 355,
Amer. Math. Soc., Providence, RI, 2004.

\bibitem{Hu}
J. Hu,
Norms on earthquake measures and Zygmund functions, Proc.
Amer. Math. Soc. 133 (2005), no. 1, 193--202


\bibitem{Ker} S. Kerckhoff, Earthquakes are analytic,
Comment. Math. Helv. 60 (1985), no. 1, 17--30.


\bibitem{Otal}
J.P. Otal,
About the embedding of Teichm\"uller space in the space of geodesic H\"older distributions,
\emph{Handbook of Teichm\"uller theory} Vol. I,  223--248, IRMA Lect. Math. Theor. Phys., {\bf 11},
Eur. Math. Soc., Z\"urich (2007).

\bibitem{Saric4}
D. \v{S}ari\'c, Real and Complex Earthquakes, Trans. Amer. Math.
Soc. 358 (2006), no. 1, 233--249.

\bibitem{Saric3}
D. \v{S}ari\'c, Infinitesimal Liouville distributions for
Teichm\"uller space, Proc. London Math. Soc. (3) 88 (2004), no. 2,
436--454.

\bibitem{Saric2}
D. \v{S}ari\'c, Bounded Earthquakes, Proc. Amer. Math. Soc. 136
(2008), no. 3, 889--897

\bibitem{Saric1}
D. \v{S}ari\'c, Some remarks on bounded Earthquakes, Proc. Amer.
Math. Soc. 138 (2010), no. 3, 871--879.

\bibitem{Saric5}
D. \v{S}ari\'c, Geodesic currents and Teichm\"uller space, Topology
44 (2005), no. 1, 99--130.

\bibitem{Stein}
E.M. Stein,
\emph{Singular integrals and differentiability properties of functions},
Princeton Mathematical Series, No. 30 Princeton University Press, Princeton, N.J
(1970).

\bibitem{Schwartz}
L. Schwartz,
\emph{Radon measures on Arbitrary Topological spaces
and cylindrical measures},
Tata Institute of Fundamental Research,
Oxford University Press (1973).

\bibitem{Slod}
Z. Slodkowski, Holomorphic motions and polynomial hulls, Proc. Amer.
Math. Soc. 111 (1991), no. 2, 347--355.

\bibitem{Thurston}
W. Thurston,
Earthquakes in two-dimensional hyperbolic geometry,
\emph{Low-dimensional topology and Kleinian groups},
LMS. Lecture Note Ser., 112, Cambridge Univ. Press, Cambridge
(1986),
 91--112.

 \bibitem{Zygmund}
 A. Zygmund,
\emph{Smooth functions},
Duke Math. J.  {\bf 12},  (1945). 47--76.

\end{thebibliography}
\end{document}